\documentclass {article}
\usepackage[english]{babel}
\usepackage[latin9]{inputenc}
\usepackage[T1]{fontenc}
\usepackage{graphicx}
\usepackage{hyperref}
\usepackage[final]{pdfpages}

\usepackage{mathtools}
\usepackage{amsmath,amsfonts}
\usepackage{amsthm}
\usepackage{mathtools}
\usepackage{dsfont}
\usepackage{mathenv}
\usepackage{empheq}
\usepackage{enumitem}  

\setlength{\oddsidemargin}{7mm}
\setlength{\evensidemargin}{7mm}
\setlength{\topmargin}{4mm}
\setlength{\textheight}{20cm}
\setlength{\textwidth}{15cm}

{\theoremstyle{definition} \newtheorem{thm}{Theorem}[section]}
{\theoremstyle{plain} \newtheorem{prop}[thm]{Proposition}} 
\newtheorem{coro}[thm]{Corollary}

\theoremstyle{remark}
	\newtheorem{Rmq}[thm]{Remark} 
\theoremstyle{definition}
	
\theoremstyle{lemma}
	\newtheorem{Lemme}[thm]{Lemma}
	
\newtheoremstyle{mytheoremstyle} 
    {\topsep}                    
    {\topsep}                    
    {\itshape}                   
    {}                           
    {\scshape}                   
    {.}                          
    {.5em}                       
    {}  

\theoremstyle{mytheoremstyle}

\newcommand{\de}{\mathrm{d}}
\newcommand{\D}{\partial}
\newcommand{\eps}{\epsilon}
\newcommand{\R}{\mathbb{R}}
\newcommand{\p}{p_i}

\title{Dynamics of concentration in a population model structured by age and a phenotypical trait }

\author{
Samuel Nordmann\thanks{Ecole des Hautes Etudes en Sciences Sociales, CAMS,
190-198, avenue de France 75244 Paris Cedex 13, France} \footnotemark[3]
\and
Beno\^ \i t Perthame\thanks{Sorbonne Universit�s, UPMC Univ Paris 06, Laboratoire Jacques-Louis Lions  UMR CNRS 7598, UPD, Inria~de~Paris, F75005 Paris, France}  
\thanks{Emails: samuel.nordmann@ehess.fr, benoit.perthame@upmc.fr, taing@ann.jussieu.fr}
\and
C\'ecile Taing\footnotemark[2] \footnotemark[3] 
}

\mathtoolsset{showonlyrefs,showmanualtags}

\begin{document}
\maketitle

\begin{abstract}
We study a mathematical model describing the growth process of a population structured by age and a phenotypical trait, subject to aging, competition between individuals and rare mutations. Our goals are to describe the asymptotic behavior of the solution to a renewal type equation, and then to derive properties that illustrate the adaptive dynamics of such a population.
We begin with a simplified model by discarding the effect of mutations, which allows us to introduce the main ideas and state the full result. Then we discuss the general model and its limitations. 
 
Our approach uses the eigenelements of a formal limiting operator, that depend on the structuring variables of the model and define an effective fitness. Then we introduce a new method which reduces the convergence proof to entropy estimates rather than estimates on the constrained Hamilton-Jacobi equation. Numerical tests illustrate the theory and show the selection of a fittest trait according to the effective fitness.
For the problem with mutations, an unusual Hamiltonian arises with an exponential growth, for which we prove existence of a global viscosity solution, using an uncommon a priori estimate and a new uniqueness result. 

\end{abstract}

\noindent {\bf Key-words:} Adaptive evolution; Asymptotic behavior; Dirac concentrations; Hamilton-Jacobi equations; Mathematical biology; Renewal equation; Viscosity solutions.
 \\[4mm]
\noindent {\bf AMS Class. No:} 	35B40, 35F21, 35Q92, 49L25. 

\section{Introduction} 
\label{sec:model}
\paragraph{} 
\label{par:}


The mathematical description of competition between populations and selection 
phenomena leads to the use of nonlocal equations that are structured by a 
quantitative trait. A mathematical way to express the selection of the fittest 
trait is to prove that the population density concentrates as a Dirac mass (or a 
sum of Dirac masses) located on this trait. This result has been obtained for 
various models with parabolic (\cite{GB.BP:07, GB.SM.BP:09, AL.SM.BP}) and 
integro-differential equations (\cite{Ba-Pe, Des-Jab-Mis-Rao, lorenzi-2013}). 
More generally, convergence to positive measures in selection-mutation models 
has  been studied by many authors, see \cite{ackleh_F_T, calsina-al-2013, 
Busse2016} for example. The question that we pose in the present paper is the 
long time behavior of the population density when the growth rate depends both 
on phenotypical fitness and age. This question brings up to consider the aging 
parameter and to use renewal type equations.
Accordingly, the aim of this paper is to study the asymptotic behavior of the solutions, as $\eps \to 0$, to the following model, with $x \geq 0$ and $y \in \R^n$:
\begin{equation}
\left\{\begin{array}{ll}
\eps \D_t m_\eps(t,x,y) + \D_x\left[A(x,y)m_\eps(t,x,y)\right] +\left(\rho_\eps(t)+d(x,y)\right)m_\eps(t,x,y)=0,
\\[2mm]
\displaystyle A(x=0,y)m_\eps\left(t,x=0,y\right)=\frac{1}{\eps^n}\int_{\R^n}\int_{\R_+}{M(\frac{y'-y}{\eps})b(x',y')m_\eps(t,x',y')dx'dy'}, 
\\[2mm]
\displaystyle\rho_\eps(t)=\int_{\R_+}\int_{\R^n}{m_\eps(t,x,y)dx dy},
\\[2mm]
m_\eps(t=0,x,y)=m_\eps^0(x,y)>0.
\end{array}\right. \label{equa}
\end{equation}

We choose $m_\eps(t,x,y)$ to represent the population density of individuals which, at time $t$, have age $x$ and trait $y$. The function $A(x,y)$ is the speed of aging for individuals with age $x$ and trait $y$. 
We denote with $\rho_\eps(t)$ the total size of the population at time $t$. Here the mortality effect features the nonlocal term $\rho_\eps(t)$, which represents competition, and an intrinsic death rate $d(x,y)>0$.
The condition at the boundary $x=0$ describes the birth of newborns that happens with rate $b(x,y)>0$  and with the probability kernel of mutation $M$. The terminology of "renewal equation" comes from this boundary condition. It is related to the McKendrick-von Foerster equation which is only structured in age (see \cite{BP} for a study of the linear equation). This model has been extended with other structuring variables as size for example (see \cite{DiekPhys, Mis-Pe-Ry}) and then with more variables (representing DNA content, maturation, etc.) to illustrate biological phenomena, among many others, like cell division (see \cite{Doumic-Gabriel, PMichel}),  proliferative and quiescent states of tumour cells (see \cite{Adimy-Cr-Ru, GyllWebb}). Space structured problems have also been extensively studied (see \cite{PEJ:RSS, Mi-patch, Mi-Pe-spatial,BP:PS-dispersal}). The variable~$x$ can represent different biological quantities that evolve throughout the individual lifespan and that are not inherited at birth. These can be as diverse as, for example,  the size of individuals, a physiological age, a parasite load and many others. 
Therefore we assume that the propgression speed $A$ depends both on $x$ and the trait $y$ to keep the model~\eqref{equa} quite general. In the present paper, we refer to $x$ as the age for simplicity. Studies in these contexts can be found in~\cite{calsina-2013}  about the existence of steady states for a selection-mutation model structured by physiological age and maturation age, which is considered as a phenotypical trait.

The parameter $\eps > 0$ is used for a time rescaling, since 
we consider selection-mutation phenomena that occur 
in a longer time scale than in an individual life cycle. 
It is also introduced to consider rare mutations. 
This rescaling is a classical way to give a continuous 
formulation of the adaptive evolution of a phenotypically 
structured population, in particular to analyze the 
dynamics of "$\bar y_\eps (t)$", the fittest trait at 
time $t$, which is solution to a form of a canonical 
equation from the framework of adaptive dynamics 
(see \cite{CFBA, OD, Di-Ja-Mi-Pe, AL.SM.BP}).\\

Here we observe two different time scales for our model. The first one is the individual life cycle time scale, i.e. the time for the population to reach the dynamical equilibrium for a fixed $y$. The second one is the evolutionary time scale, corresponding to the evolution of the population distribution with respect to the variable $y$. The mathematical expression of these two time scales is the property of variable separation
\begin{equation}
m_\eps (t,x,y) \simeq \bar \rho (t) Q(x,y) \delta_{y =\bar y (t)},\nonumber
\end{equation}
when $\eps$ is close to $0$, where $Q(x,y)$ is a normalized equilibrium distribution over age for a fixed $y$, $\bar \rho (t)$ the total population density and $\bar y (t)$ the fittest trait at the limit $\eps \to 0$.
In order to observe the asymptotic behavior of the solution to \eqref{equa}, the key point is to prove convergence results when $\eps$ vanishes, that is when the two time scales become totally separated. In other words, as $\eps$ vanishes, we observe the ecological equilibrium and we focus on the evolutionary dynamics of the population density to identify $\bar y (t)$.
\paragraph{} 
\label{par:}

As a first step, we ignore mutations, i.e. we take $M(z)=\delta_0(z)$. Equation \eqref{equa} becomes, for $t, x \geq 0$ and $y \in \R^n$,
\begin{equation}\label{systemesansmutation}
\left\{\begin{array}{ll}
\eps \D_t m_\eps(t,x,y) + \D_x\left[A(x,y)m_\eps(t,x,y)\right] +\left(\rho_\eps(t)+d(x,y)\right)m_\eps(t,x,y)=0,
\\[2mm]
\displaystyle A(x=0,y)m_\eps\left(t,x=0,y\right)=\int_{\R_+}{b(x',y)m_\eps(t,x',y)d x'}, 
\\[2mm]
\displaystyle \rho_\eps(t)=\int_{\R^n}\int_{\R_+}{m_\eps(t,x,y)d x dy},
\\[2mm]
m_\eps(t=0,x,y)=m_\eps^0(x,y)>0.
\end{array}\right.
\end{equation}
The analysis of this simplified model allows us to introduce the main ideas of our work. In order to study the asymptotic behavior of the solution to \eqref{systemesansmutation}, we consider the associated eigenproblem, that is to find, for each $y \in \R^n$, the solution $(\Lambda(y), Q(x,y))$ to
\begin{equation}\label{valeurpropreS}
\left\{\begin{array}{ll}
\D_x\left[A(x,y)Q(x,y)\right]+d(x,y)Q(x,y)=\Lambda(y)Q(x,y),\\[2mm]
\displaystyle A(x=0,y)Q(x=0,y)=\int_{\R_+}{b(x',y)Q(x',y) dx'},
\\[2mm]
\displaystyle Q(x,y)> 0, \quad \int_{\mathbb{R}^+}b(x',y)Q(x',y)d x'=1.
\end{array}\right.
\end{equation}
We also define $\Phi$, solution of the dual problem 
\begin{equation}\label{equaduale2S}
\left\{\begin{array}{l}
A(x,y)\D_x\Phi(x,y)+\left[\Lambda(y)-d(x,y)\right]\Phi(x,y)=-b(x,y)\Phi(0,y),\\[2mm]
\displaystyle \int_{\R_+}{Q(x,y)\Phi(x,y)d x}=1.
\end{array}\right.
\end{equation}

\paragraph{} 
\label{par:}

The purpose of this paper is to introduce an alternative to the usual WKB method (see \cite{GB.BP:07, Di-Ja-Mi-Pe}) to prove the concentration phenomenon in the $y$ variable for the model \eqref{systemesansmutation}. Indeed we propose a new approach that consists in firstly introducing the exponential concentration singularity and secondly in estimating the corresponding age profile. The main idea is to define a function $u_\eps(t,y)$ independent of $x$, and an "age profile" $p_\eps(t,x,y)$, such that we can write ${m_\eps(t,x,y)=p_\eps(t,x,y)\mathrm{e}^{\frac{u_\eps(t,y)}{\eps}}}$. Then we prove that $u_\eps$ converges uniformly to a function $u$, which zeros correspond to the potential concentration points of the population density when $\eps$ vanishes. Moreover, following earlier works, we prove that  $p_\eps(t,x,y)$ converges to the first eigenvector of the stationary problem introduced in~\eqref{valeurpropreS} using the general relative entropy (GRE) principle
(see \cite{PMSMBP1} for an introduction).

This convergence result does not apply for the model \eqref{equa} with mutations. Because of several technical obstructions we cannot prove the full result. However, we are able to derive some estimates resulting from the study of the formal limiting problem. Then we derive an approximation problem with a Hamilton-Jacobi equation satisfied by a sequence $u_\eps$ that we build and we prove its convergence to the solution to the constrained Hamilton-Jacobi equation coming from the formal limiting problem. This constrained Hamilton-Jacobi formally determines the locations of the concentration points. 

Recently, the asymptotic behavior of an age-structured equation with spatial 
jumps has been determined in  \cite{VC:PG:AMG} when the death rate vanishes and 
with a slowly decaying birth rate $b$; then the 
eigenproblem~\eqref{valeurpropreS} does not have a solution. Also in 
\cite{Dji-Duc-Fab}, a concentration result has been proved for a model 
representing the evolutionary epidemiology of spore producing plant pathogens in 
a host population, with infection age and pathogen strain structures.

More generally, the  Hamilton-Jacobi approach to prove the concentration of the population density goes back to \cite{Di-Ja-Mi-Pe} and has been extensively used in works on the similar issue (see \cite{champagnat2011} for example). It also has been used in the context of front propagation theory for reaction-diffusion equations (see \cite{BG-LCE-PES, combustion, Eva}). For example in the case of the simple Fisher-KPP equation, the dynamics of the front are described by the level set of a solution of a Hamilton-Jacobi equation. In this framework, it is naturally appropriate to use the theory of viscosity solutions to derive the convergence of the sequence $u_\eps$ (see \cite{bardi-capuzzo, GB:94, fle-soner} for an introduction to this notion). In this paper we also prove a uniqueness result in the viscosity sense that is not standard because the Hamiltonian under investigation has exponential growth.

\paragraph{} 
\label{par:}
The paper is organized as follows. We first state the general assumptions in section~\ref{SecAssumption}.
Section~\ref{secCaseWM} is devoted to the formulation and the proof of the convergence results in the case without mutation. In section~\ref{secCaseM}, we discuss the case with mutations and tackle the formal limit of the stationary problem. Finally we present some numerics in section~\ref{numerics}.

\section{Assumptions}\label{SecAssumption}
 
Since the analysis requires several technical assumptions on the coefficients and the initial data, we present them first.
\newline

\noindent \textit{Regularity of the coefficients.}
 We assume that $x\mapsto b(x,y)>0$ and ${x\mapsto d(x,y)>0}$ are uniformly continuous, that $x\mapsto A(x,y)$ is $\mathcal{C}^1$ and such that, for all $y \in \R^n$,
\begin{equation}\label{hypothesed}
\lim\limits_{\substack{x\to+\infty}}d(x,y)= +\infty,
\end{equation}
\vspace{-2mm}
\begin{equation}\label{hypM2}
0<\underline{r}\leq b(x,y)-d(x,y)\leq \overline{r},
\end{equation}
\vspace{-2mm}
\begin{equation}\label{bornesA}
	0<A_0\leq A(x,y)\leq A_\infty,\quad\text{for two positive constants $A_0$ and $A_\infty$}. 
\end{equation}
This set of assumptions is an example. It serves mostly to guarantee some properties of the spectral problem which are stated in Theorem~\ref{theoremeeigenS}. Only the conclusions of Theorem~\ref{theoremeeigenS} are used in the present approach to the concentration phenomena.
\\

\noindent \textit{Conditions on the initial data.} 
We suppose that the total density is initially bounded
\begin{equation}
0<\underline{\rho}^0\leq\rho_\eps^0\leq\overline{\rho}^0,\label{hypM1bis}
\end{equation}
with $\underline{\rho}_0$ and $\overline{\rho}_0$ two constants.
Besides we assume the population to be well prepared for concentration, that is, we can write 
\begin{equation}\label{hypinit}
m_\eps^0(x,y)=p_\eps^0(x,y)e^{\frac{u_\eps^0(y)}{\eps}},
\end{equation} where $u_\eps^0$ is uniformly Lipschitz continuous and
\begin{equation}\label{initialu}
	\left\{\begin{aligned}
	& \exists k_0>0, \forall \eps>0, \forall (y,y') \in \R^{2n}, \vert u^0_\eps(y) - u^0_\eps(y') \vert \leq k_0 \vert  y-y'\vert, \\[2mm]
		&u^0_\eps(y)\to u^0(y)\leq0 \text{  uniformly in $y$},\\[2mm]
		&\exists!\ \bar y^0\in\R^n, \max_{y \in \R^n} u^0(y)=u^0(\bar y^0)=0,\\
		&e^{\frac{u^0_\eps}{\eps}} \xrightharpoonup[\eps \to 0]{} \delta_{\bar y^0}.
	\end{aligned}\right.
\end{equation}
Finally, we assume that, for all $y \in \R^d$, there exist $\underline{\gamma}(y)$, $\overline{\gamma}(y)$ and $\gamma^0(y)$ positive such that, for all $\eps >0, x \in \R_+$, 
\vspace{2mm}
\begin{equation}\label{initialp}
	\underline{\gamma}(y)Q(x,y)\leq p_\eps^0(x,y)\leq \overline{\gamma}(y) Q(x,y),
\end{equation}
\vspace{-2mm}
\begin{equation}\label{hypp}
 	\int_{\R_+} \left\vert p_\eps^0(x,y)-\gamma^0(y)Q(x,y)\right\vert \Phi(x,y)dx \underset{\eps \to 0}{\longrightarrow} 0, \quad    \text{uniformly in $y$},
\end{equation} where $Q,\Phi$ are eigenelements associated with the eigenproblem~\eqref{valeurpropreS}-\eqref{equaduale2S} which properties are analyzed in section~\ref{subsec:eigenS}.
\\

\noindent\textit{Some notations}: We define, for $x \in \R_+,$ $ y \in \R^n$ and $\lambda \in \R$, the functions
\begin{equation}\label{definitionF}
f(x,y,\lambda)=\frac{b(x,y)}{A(x,y)}\mathrm{exp}\left(-\int_0^{x}{\frac{d(x',y)-\lambda}{A(x',y)}d x'}\right), \quad F(y, \lambda)= \int_{\R_+} f(x,y,\lambda)dx.
\end{equation}

\section{Case without mutations} \label{secCaseWM}
We present our new approach to understand how solutions of~\eqref{systemesansmutation} behave when $\eps$ vanishes. To prove that a concentration in the $y$ variable may occur, we first consider the principal eigenvalue $\Lambda(y)$ of~(\ref{valeurpropreS}), and define $u_\eps$ as the solution of the equation
\begin{equation}\label{hamiltonienS}
	\left\{\begin{aligned}
		&\D_t u_\eps(t,y)=-\Lambda(y)-\rho_\eps(t), \quad t>0, y \in \R^n,\\
		&u_\eps(0,y)=u_\eps^0,\quad y \in \R^n.
	\end{aligned}\right.
\end{equation}
Then, we define $p_\eps$ such that 
\begin{equation}\label{factorization}
{m_\eps(t,x,y)=p_\eps(t,x,y)e^{\frac{u_\eps(t,y)}{\eps}}},
\end{equation}
and we prove that $p_\eps$ converges when $\eps\to0$ respectively to the eigenvector $Q$ associated to $\Lambda$ in some way that we will specify, using an entropy method. Thereafter we prove that $u_\eps$ converges locally uniformly as $\eps$ goes to 0. This section is devoted to the proof of the following theorem, which states the concentration of the population density on the fittest traits.

\begin{thm}\label{Theorem1}
Assume \eqref{hypothesed}--\eqref{hypp}. Let $m_\eps$ be the solution of~\eqref{systemesansmutation}, $u_\eps$ the solution of~\eqref{hamiltonienS}, $p_\eps$ defined by the factorization~\eqref{factorization} and $(\Lambda,Q)$ defined in~\eqref{valeurpropreS}. Then, the following assertions hold true:
\begin{enumerate}[label=(\roman*)]
\item $\rho_\eps(t)=\int_{\R^n}\int_{\R_+}m_\eps(t,x,y)dxdy$ converges to a function $\rho$ when $\eps$ vanishes in $L^{\infty}(0, \infty)$ weak-$\star$.
\item $p_\eps$ converges to a multiple of the normalized eigenvector $Q$ for a weighted $L^1$ norm.
\item $u_\eps$ converges locally uniformly when $\eps$ vanishes to a continuous function $u$ solution of
\begin{equation}\label{HJSlimit}
	\left\{\begin{aligned}
		&\D_t u(t,y)=-\Lambda(y)- \rho(t), \quad t>0, y \in \R^n,\\[2mm]
		&\sup_{y \in \R^n} u(t,y) =0, \quad \forall t>0,\\
		&u(0,y)=u^0(y),\quad y\in\R^n.
	\end{aligned}\right.
\end{equation}

\item Hence, $m_\eps$ converges weakly as $\eps$ vanishes to a measure $\mu$ which support is included in $\{(t,y)\in(0,\infty)\times\mathbb{R}^n | u(t,y)=0\}$.
\item Furthermore, assuming $u^0$ and $-\Lambda$ to be strictly concave
$$m_\eps(t,x,y)\underset{\eps\to0}{\rightharpoonup}\rho(t)\frac{Q(x,y)}{\Vert Q(\cdot,y)\Vert_{L^1}}\delta_{y=\bar y(t)},$$ where $\bar y(t)\in\R^n$ satisfies a canonical differential equation.
\end{enumerate}
\end{thm}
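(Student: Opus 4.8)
The plan is to prove the five assertions in order, since each relies on the previous ones. The backbone is the factorization \eqref{factorization}: substituting $m_\eps = p_\eps e^{u_\eps/\eps}$ into \eqref{systemesansmutation} and using \eqref{hamiltonienS} to cancel the time derivative of $u_\eps$, one sees that $p_\eps$ solves a \emph{conservative} renewal-type equation in $(x,y)$, parametrized by $y$, in which the troublesome factor $e^{u_\eps/\eps}$ has been absorbed. Concretely, $\eps \D_t p_\eps + \D_x[A p_\eps] + (d - \Lambda)p_\eps = 0$ with the same birth boundary condition, because $\rho_\eps + \D_t u_\eps = -\Lambda$. This is exactly the equation whose long-time (here: $\eps\to 0$) equilibrium is the eigenpair $(\Lambda(y), Q(\cdot,y))$ from \eqref{valeurpropreS}. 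So the strategy is: (a) get a priori bounds on $\rho_\eps$; (b) run the General Relative Entropy estimate on $p_\eps$ against the dual weight $\Phi$ to get (ii); (c) pass to the limit in the $\rho_\eps$ identity to get (i) and then (iii); (d) deduce (iv) from the constraint $\sup_y u = 0$; (e) upgrade to (v) under strict concavity.

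First I would establish the uniform bounds $0 < \underline\rho \le \rho_\eps(t) \le \overline\rho$ for all $t \ge 0$, independent of $\eps$. The upper bound comes from integrating \eqref{systemesansmutation} in $(x,y)$, using \eqref{hypM2} to control the net growth $b - d \le \overline r$, and a Gronwall/comparison argument exploiting that $\rho_\eps$ appears as a damping term; the lower bound is symmetric using $b - d \ge \underline r > 0$. This gives compactness of $\rho_\eps$ in $L^\infty$ weak-$\star$, hence a limit $\rho$ along a subsequence, yielding (i). Simultaneously, \eqref{hamiltonienS} integrates explicitly: $u_\eps(t,y) = u_\eps^0(y) - t\Lambda(y) - \int_0^t \rho_\eps(s)\,ds$, so the uniform bounds on $\rho_\eps$ and the uniform Lipschitz bound \eqref{initialu} on $u_\eps^0$ give equi-Lipschitz and equi-bounded-on-compacts estimates on $u_\eps$ in $(t,y)$; Arzelà–Ascoli then gives local uniform convergence to $u(t,y) = u^0(y) - t\Lambda(y) - \int_0^t \rho(s)\,ds$, which solves the PDE in \eqref{HJSlimit} once we know the constraint $\sup_y u(t,\cdot) = 0$. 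That constraint is the delicate point: it must follow from the fact that $\int m_\eps\,dx\,dy = \rho_\eps$ stays bounded above and below, forcing $e^{u_\eps/\eps}$ (times the mass $\int p_\eps\,dx$, which is $\Theta(1)$ by the entropy estimate) to behave like a probability measure — so $\max_y u_\eps(t,\cdot) \to 0$. This is where I expect the main work: one shows $\max_y u(t,y) \le 0$ because otherwise $\rho_\eps \to \infty$, and $\max_y u(t,y) \ge 0$ because otherwise $\rho_\eps \to 0$, both contradicting the uniform bounds; care is needed to make "the mass concentrates where $u_\eps$ is maximal" quantitative, uniformly in $t$ on compact time intervals, using \eqref{initialu} (the initial mass is already a Dirac at $\bar y^0$) to propagate the normalization.

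For (ii), the heart is the GRE inequality: multiplying the equation for $p_\eps$ by $\Phi(x,y)\,\mathrm{sgn}(p_\eps - \gamma Q)$ (or using the convex entropy $|p_\eps - \gamma Q|$) and integrating against $\Phi\,dx$, the eigen-relations \eqref{valeurpropreS}--\eqref{equaduale2S} make the $x$-flux and the reaction terms combine into a nonpositive dissipation, giving
\[
\eps \frac{d}{dt}\int_{\R_+} |p_\eps - \gamma Q|\,\Phi\,dx \le 0
\]
for each fixed $y$ (the boundary term at $x=0$ cancels thanks to the normalization $\int b Q = 1$ and the dual boundary condition). Combined with the well-prepared-data hypotheses \eqref{initialp}--\eqref{hypp}, this shows $\int |p_\eps(t,x,y) - \gamma^0(y) Q(x,y)|\Phi(x,y)\,dx \to 0$, i.e. $p_\eps \to \gamma^0(y) Q(\cdot,y)$ in the weighted $L^1$, which is (ii); the sandwich bound \eqref{initialp} propagates in time (again by GRE/comparison) to keep $p_\eps$ comparable to $Q$ and prevent loss of mass. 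Then (iv) is immediate: $m_\eps = p_\eps e^{u_\eps/\eps}$ with $p_\eps$ bounded and $u_\eps/\eps \to -\infty$ wherever $u(t,y) < 0$, so any weak limit $\mu$ is supported in $\{u = 0\}$. Finally (v): strict concavity of $u^0$ and $-\Lambda$ makes $u(t,\cdot) = u^0 - t\Lambda - \int_0^t\rho$ strictly concave for every $t$, so $\{y : u(t,y) = 0\}$ is at most a single point $\bar y(t)$; strict concavity also gives non-degeneracy of the Hessian, so the implicit function theorem applied to $\nabla_y u(t, \bar y(t)) = 0$ yields $\dot{\bar y}(t) = -[D^2_{yy}u(t,\bar y(t))]^{-1}\nabla_y\Lambda(\bar y(t))$, the announced canonical equation, and the profile in age is forced to be $Q(x,y)/\|Q(\cdot,y)\|_{L^1}$ by (ii) together with $\rho = \int\int m$, which pins the constant $\gamma^0(\bar y(t))$.
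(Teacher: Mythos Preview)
Your plan matches the paper's proof almost step for step: uniform bounds on $\rho_\eps$ by integrating the equation and using \eqref{hypM2}; derivation of the conservative equation $\eps\D_t p_\eps + \D_x[Ap_\eps] + (d-\Lambda)p_\eps = 0$ for $p_\eps$ and a GRE-type monotonicity for $\int|p_\eps/Q - \gamma^0|\,Q\Phi\,dx$ (the paper writes this as $v_\eps = p_\eps/Q - \gamma^0$ and computes $\eps\D_t\int|v_\eps|Q\Phi\,dx = -\Phi(0,y)(\langle|v_\eps|\rangle - |\langle v_\eps\rangle|)\le 0$); the explicit formula $u_\eps = u_\eps^0 - t\Lambda - \int_0^t\rho_\eps$; the two-sided argument for $\sup_y u(t,\cdot)=0$ from the bounds on $\rho_\eps$; and the implicit function theorem under strict concavity for (v).

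The one place your account is incomplete is the passage from subsequential to full convergence of $\rho_\eps$ in (i). Compactness alone only gives a limit along a subsequence, so ``hence a limit $\rho$ along a subsequence, yielding (i)'' is not yet (i). The paper closes this by observing that, once the constraint $\sup_y u(t,y)=0$ is established for any subsequential limit, the explicit formula forces
\[
\int_0^t\rho(s)\,ds \;=\; \sup_{y\in\R^n}\bigl[u^0(y)-t\Lambda(y)\bigr],
\]
which uniquely determines $\rho$ independently of the subsequence; hence the whole family $\rho_\eps$ (and then $u_\eps$) converges. Note that this step logically requires (ii) --- or at least the propagated sandwich bound $\underline\gamma Q\le p_\eps\le\overline\gamma Q$ --- \emph{before} (i) is fully proved, which is why the paper proves (ii) first. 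Also a minor sign: differentiating $\nabla_y u(t,\bar y(t))=0$ in $t$ gives $\dot{\bar y} = (\nabla^2_y u)^{-1}\nabla_y\Lambda$, without the extra minus.
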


\subsection{The eigenproblem} 
\label{subsec:eigenS}

We first study the eigenproblem \eqref{valeurpropreS} and the associated dual problem \eqref{equaduale2S}.
The operator in~\eqref{valeurpropreS}, which is time independent, is obtained by formally taking $\eps=0$ in system~\eqref{systemesansmutation} and by removing the formal limiting term $\rho(t)$. We point out that this approach relies on the observation that $\rho_\eps(t)$ operates linearly on $m_\eps$, therefore its effect on the eigenvalue $\Lambda$ is no more than a shift. 
The following theorem states existence and uniqueness for these eigenelements as well as some properties.
\begin{thm}\label{theoremeeigenS} 
We assume (\ref{hypothesed})--\eqref{bornesA}. For a given $y\in\mathbb{R}^n$, there exists a unique triplet\break $(\Lambda(y), Q(x,y), \Phi(x,y))$ solution of (\ref{valeurpropreS})-(\ref{equaduale2S}). Moreover, the function $x\mapsto Q(x,y)$ is bounded and belongs to $L^1(0,\infty)$, the function $y \mapsto \Lambda(y)$ is $\mathcal{C}^1$ and we have
\begin{equation}\label{derlambdaS}
 	\D_\lambda F>0,\quad F(y,\Lambda(y))=1,
\end{equation}
\begin{equation}\label{derlambdaS2}
\nabla_y\Lambda(y)=-\frac{\nabla_yF(y,\Lambda(y))}{\D_\lambda F(y,\Lambda(y))}, \quad \underline{r}\leq -\Lambda(y)\leq \overline{r},
\end{equation}
where $F$ is defined in \eqref{definitionF}.
\end{thm}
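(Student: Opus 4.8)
The plan is to reduce the eigenproblem \eqref{valeurpropreS} to a scalar equation for $\Lambda(y)$ by solving the transport ODE in $x$ explicitly, then to read off all the stated properties from the function $F$ defined in \eqref{definitionF}. First I would fix $y$ and treat $\Lambda$ as an unknown parameter $\lambda$: the equation $\partial_x[A(x,y)Q] + d(x,y)Q = \lambda Q$ is a first-order linear ODE in $x$ for the flux $AQ$, so given the value $AQ$ at $x=0$, it has the unique solution
\begin{equation}
A(x,y)Q(x,y) = A(0,y)Q(0,y)\,\exp\left(-\int_0^x \frac{d(x',y)-\lambda}{A(x',y)}\,dx'\right).\nonumber
\end{equation}
Plugging this into the boundary condition $A(0,y)Q(0,y) = \int_{\R_+} b(x',y)Q(x',y)\,dx'$ and using the normalization $\int b(x',y)Q(x',y)\,dx' = 1$ forces $A(0,y)Q(0,y)=1$, and the compatibility condition becomes exactly $F(y,\lambda)=1$. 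So $\Lambda(y)$ is characterized as the unique root $\lambda$ of $F(y,\cdot)=1$, and $Q(\cdot,y)$ is then given by the formula above with $\lambda=\Lambda(y)$, i.e. $Q(x,y) = f(x,y,\Lambda(y))/b(x,y)$ with $f$ as in \eqref{definitionF}.

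Next I would establish that this root exists and is unique. Monotonicity is immediate: $\partial_\lambda f(x,y,\lambda) = f(x,y,\lambda)\int_0^x \frac{dx'}{A(x',y)} > 0$ for $x>0$, so $\partial_\lambda F(y,\lambda) > 0$, which is the first assertion of \eqref{derlambdaS}. For the range, I would evaluate $F$ at the endpoints $\lambda = -\overline r$ and $\lambda = -\underline r$: using \eqref{hypM2} in the form $d(x,y)-\lambda \gtrless b(x,y)$ and comparing $f(x,y,\lambda)$ with $\frac{d}{dx}\left[-\exp\left(-\int_0^x \frac{d-\lambda}{A}\right)\right]$ (valid when $b \le d-\lambda$) one gets $F(y,-\overline r) \le 1$ and similarly $F(y,-\underline r)\ge 1$; assumption \eqref{hypothesed} guarantees the integral defining $F$ converges (the exponential has a super-exponentially decaying integrand once $d(x',y)\to+\infty$), and also that $Q(\cdot,y)\in L^1\cap L^\infty$. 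By the intermediate value theorem and strict monotonicity there is a unique $\Lambda(y)\in[-\overline r,-\underline r]$, giving the second half of \eqref{derlambdaS} and the two-sided bound in \eqref{derlambdaS2}. The dual problem \eqref{equaduale2S} is handled the same way: it is again a linear first-order ODE in $x$ for $\Phi$, solvable explicitly once $\Phi(0,y)$ is fixed, and the normalization $\int Q\Phi\,dx = 1$ then pins down $\Phi(0,y)$ uniquely; one checks the resulting $\Phi$ is positive and the integral is finite using the decay of $Q$.

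Finally, the regularity $y\mapsto\Lambda(y)\in\mathcal C^1$ and the gradient formula \eqref{derlambdaS2} follow from the implicit function theorem applied to $G(y,\lambda):=F(y,\lambda)-1=0$, since $\partial_\lambda F>0$ is nonvanishing; differentiating $F(y,\Lambda(y))\equiv 1$ in $y$ gives $\nabla_y F + \partial_\lambda F\,\nabla_y\Lambda = 0$, which is precisely \eqref{derlambdaS2}. This requires knowing that $F$ is $\mathcal C^1$ jointly, which needs differentiation under the integral sign; the main technical obstacle is therefore justifying the regularity of $F$ in $y$ and the legitimacy of differentiating under the integral — this demands locally uniform (in $y$) integrable bounds on $f$ and $\nabla_y f$, which in turn need the regularity of the coefficients together with \eqref{hypothesed}–\eqref{bornesA}. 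Everything else is an explicit computation with the representation formula.
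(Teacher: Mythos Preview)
Your proposal is correct and follows essentially the same route as the paper: solve the transport ODE explicitly to get the representation formula for $Q$, reduce the eigenvalue condition to the scalar equation $F(y,\Lambda(y))=1$, use $\partial_\lambda F>0$ with the intermediate value theorem for existence/uniqueness, and apply the implicit function theorem for the $\mathcal C^1$ regularity and the gradient formula. The one pleasant difference is that you obtain the bounds $\underline r\le -\Lambda(y)\le \overline r$ simultaneously with existence, by evaluating $F$ at $\lambda=-\overline r,-\underline r$ and comparing $f$ with the exact derivative $-\frac{d}{dx}\exp\bigl(-\int_0^x\frac{d-\lambda}{A}\bigr)$; the paper instead first gets existence from $F(y,-\infty)=0$, $F(y,+\infty)=+\infty$ and then derives the bounds separately by integrating \eqref{valeurpropreS} in $x$ to write $-\Lambda(y)=\int (b-d)Q\big/\int Q$ and invoking \eqref{hypM2}.
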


  \noindent The complete proof, which only uses classical arguments, is postponed to Appendix~\ref{eigenappendix}. We give here a formal idea of the method. The eigenfunction $Q$ satisfies a linear differential equation that allows us to derive
\begin{equation}
\label{expQ}
Q(x,y)=\frac{1}{A(x,y)}\mathrm{exp}\left(-\int_0^{x}{\frac{d(x',y)-\Lambda(y)}{A(x',y)}d x'}\right).
\end{equation}
From this formulation, we deduce that the eigenvalue $\Lambda(y)$ must satisfy $F(y, \Lambda(y))=1$, for all $y \in \R^d$, where $F$ is defined in~\eqref{definitionF}. Since $\D_\lambda F>0$, the above equality determines a unique $\Lambda$, and therefore a unique $Q$. Similarly, we derive an explicit formula for $\Phi$.

Note that $Q$ represents the age distribution at equilibrium for a fixed $y$, thus it seems natural that it exponentially decreases. The eigenvalue 
$\Lambda$ defines what we call the "effective fitness". It drives the adaptive dynamics of the population, as discussed in what follows.

\subsection{Concentration} 
\subsubsection{Saturation of the population density} 

The nonlocal term $\rho_\eps$ in \eqref{systemesansmutation}, which is also called competition term, can be interpreted as the pressure exerted by the total population on the survival of individuals with trait $y$. It leads the total population to be bounded. This saturation property also holds for the general model with mutations and is stated in its general form in Proposition~\ref{saturation2}.

\begin{prop}\label{saturation}
We assume \eqref{hypothesed}-\eqref{hypM1bis} and \eqref{initialp}, then, 
\begin{equation}\label{rhoM}
\forall t \geq 0, \quad  \rho_m\leq\rho_\eps(t)\leq\rho_M,\end{equation}
where $\rho_m:=\min (\underline{r}, \underline \rho^0)$ and $\rho_M:=\max (\bar r, \bar \rho^0)$.
Hence, after extraction of a subsequence, $\rho_\eps$ converges weakly-$\star$ to a function $\rho$ in $L^\infty(0,+\infty)$. 
\end{prop}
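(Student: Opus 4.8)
The plan is to reduce everything to a scalar logistic differential inequality for the mass $\rho_\eps$. First I would integrate the equation in~\eqref{systemesansmutation} over $(x,y)\in\R_+\times\R^n$. The transport term produces only a boundary contribution at $x=0$ (the flux at $x=\infty$ vanishing thanks to the integrability and decay of $m_\eps$ in the age variable, itself inherited from~\eqref{initialp} and the structure of the equation), and the renewal boundary condition rewrites this contribution as $-\int_{\R^n}\int_{\R_+}b(x',y)m_\eps\,dx'dy$. Splitting the mortality term as $\int\!\int(\rho_\eps+d)m_\eps=\rho_\eps^2+\int\!\int d\,m_\eps$ and recalling the definition of $\rho_\eps$, this yields
\[
\eps\,\partial_t\rho_\eps(t)=-\rho_\eps(t)^2+\int_{\R^n}\int_{\R_+}\big(b(x,y)-d(x,y)\big)m_\eps(t,x,y)\,dx\,dy.
\]

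Next I would use the positivity of $m_\eps$ — which follows from a maximum principle for the renewal equation, the initial datum and the boundary input being positive — together with assumption~\eqref{hypM2} to sandwich the source term by $\underline r\,\rho_\eps(t)$ and $\overline r\,\rho_\eps(t)$. Hence $\rho_\eps$ obeys the two-sided logistic inequality
\[
\rho_\eps(t)\big(\underline r-\rho_\eps(t)\big)\ \le\ \eps\,\partial_t\rho_\eps(t)\ \le\ \rho_\eps(t)\big(\overline r-\rho_\eps(t)\big),\qquad 0<\rho_\eps(0)=\rho_\eps^0.
\]
Then a comparison argument finishes the bound: the explicit solutions of $\eps\dot z=z(\overline r-z)$ and $\eps\dot z=z(\underline r-z)$ issued from $\rho_\eps^0$ are respectively a supersolution and a subsolution for $\rho_\eps$, both monotone and converging to $\overline r$ (resp. $\underline r$); combining with the initial bound~\eqref{hypM1bis} gives $\rho_m=\min(\underline r,\underline\rho^0)\le\rho_\eps(t)\le\rho_M=\max(\overline r,\overline\rho^0)$ for all $t\ge0$, uniformly in $\eps$. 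Equivalently, one checks directly that $\rho_\eps$ cannot cross the level $\rho_M$ from below (there $\eps\dot\rho_\eps\le\rho_M(\overline r-\rho_M)\le0$) nor the level $\rho_m$ from above. Finally, this uniform $L^\infty$ bound makes $(\rho_\eps)$ bounded in $L^\infty(0,\infty)=\big(L^1(0,\infty)\big)^\star$, so by the sequential Banach–Alaoglu theorem a subsequence converges weakly-$\star$ to some $\rho\in L^\infty(0,\infty)$, necessarily with $\rho_m\le\rho\le\rho_M$ a.e.

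The main obstacle is the rigorous justification of the integration by parts: one must ensure that $A(x,y)m_\eps(t,x,y)\to0$ as $x\to\infty$ and that $t\mapsto\rho_\eps(t)$ is absolutely continuous, so that the pointwise differential argument is legitimate. Both follow from the a priori well-posedness and regularity theory for~\eqref{systemesansmutation}, with~\eqref{initialp} ensuring enough decay of the initial datum in age, which then propagates; alternatively, one can work with the mild (renormalized) formulation and the Duhamel representation along characteristics to reach the same conclusion without differentiating $\rho_\eps$ directly.
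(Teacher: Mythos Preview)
Your proposal is correct and follows essentially the same approach as the paper: integrate the equation in $(x,y)$, use the renewal boundary condition and the vanishing of $A\,m_\eps$ at $x=\infty$ to obtain $\eps\dot\rho_\eps=\int\!\!\int(b-d)m_\eps-\rho_\eps^2$, then apply~\eqref{hypM2} to get the two-sided logistic inequality and conclude. The paper justifies the decay of $A\,m_\eps$ at infinity via the explicit formula for $Q$ and~\eqref{initialp}, exactly as you anticipate in your final paragraph.
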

The proof of this result, using classical arguments, is postponed to Appendix~A and is given as a particular case of Proposition~\ref{saturation2}. 

Thereafter, in order to remove the restriction to a subsequence, we need a uniqueness statement to prove the assertion (i) of Theorem~\ref{Theorem1}. This is done in Section~\ref{SectionA}.

We now introduce $u_\eps$ solution to~\eqref{hamiltonienS}, and we define $p_\eps(t,x,y)$ by the factorization~\eqref{factorization} that we recall
\begin{equation*}
{m_\eps(t,x,y)=p_\eps(t,x,y)e^{\frac{u_\eps(t,y)}{\eps}}}.
\end{equation*}
We first prove the convergence of $p_\eps$. This convergence result is needed to prove the convergence of $u_\eps$ and then the uniqueness of $\rho$ and $u$.

\subsubsection{Convergence of $p_\eps$} 

We state the following theorem on the convergence of $p_\eps$, which details the statement (ii) of Theorem~\ref{Theorem1}.
\begin{thm}\label{thmCV} We assume \eqref{hypothesed}--\eqref{hypp}. With the constants defined in \eqref{initialp}--\eqref{hypp} and $(Q,\Phi)$ defined in Theorem~\ref{theoremeeigenS},
\begin{enumerate}[label=(\roman*)]
\item we have $\underline\gamma(y)Q(x,y)\leq p_\eps(t,x,y)\leq \overline\gamma(y)Q(x,y)$ for all $t\geq0$,
\item moreover, the profile $p_\eps$ converges to the eigenfunction $Q$ for a weighted $L^1$ norm. Namely, for $\gamma^0$ defined in assumption \eqref{hypp} we have, uniformly in $(t,y)$,
 \begin{equation*}\int_{\R^+}{\left\vert \frac{p_\eps}{Q}(t,x,y)-\gamma^0(y)\right\vert Q(x,y)\Phi(x,y)}dx \to 0 \    \      \text{ when $\eps\to0$},
\end{equation*}
\end{enumerate}
\end{thm}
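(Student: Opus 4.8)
\textbf{Proof proposal for Theorem~\ref{thmCV}.}

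The plan is to derive an equation for the profile $p_\eps$ by substituting the factorization~\eqref{factorization} into~\eqref{systemesansmutation}, and then to exploit the general relative entropy (GRE) structure associated to the eigenproblem~\eqref{valeurpropreS}--\eqref{equaduale2S}. First I would compute: since $m_\eps = p_\eps\, e^{u_\eps/\eps}$ and $u_\eps$ solves $\D_t u_\eps = -\Lambda(y)-\rho_\eps(t)$ (which crucially does not depend on $x$), the terms $\eps\D_t m_\eps$ produce $\eps\D_t p_\eps\, e^{u_\eps/\eps} + p_\eps(\D_t u_\eps)e^{u_\eps/\eps}$, and the factor $p_\eps(-\Lambda-\rho_\eps)e^{u_\eps/\eps}$ exactly cancels the $\rho_\eps m_\eps$ term and shifts the zeroth-order coefficient by $-\Lambda(y)$. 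After dividing by $e^{u_\eps/\eps}$ one obtains, for each fixed $y$,
\begin{equation*}
\eps\D_t p_\eps + \D_x\bigl[A(x,y)p_\eps\bigr] + \bigl(d(x,y)-\Lambda(y)\bigr)p_\eps = 0,
\end{equation*}
together with the boundary condition $A(0,y)p_\eps(t,0,y)=\int_{\R_+}b(x',y)p_\eps(t,x',y)\,dx'$ inherited from~\eqref{systemesansmutation} (the exponential factor, being $x$-independent, divides out of the integral). This is precisely the time-rescaled linear renewal equation whose stationary state is $Q(\cdot,y)$ with zero net growth rate, so the GRE machinery applies directly with weight $\Phi(\cdot,y)$.

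Next I would establish part (i), the pointwise bounds $\underline\gamma(y)Q\le p_\eps\le\overline\gamma(y)Q$. These follow from a comparison principle for the linear renewal equation above: $\underline\gamma(y)Q(x,y)$ and $\overline\gamma(y)Q(x,y)$ are stationary solutions (since $Q$ solves~\eqref{valeurpropreS} with eigenvalue $\Lambda$, i.e. the equation above with $\D_t=0$), and by assumption~\eqref{initialp} the initial datum $p_\eps^0$ is sandwiched between them; the maximum principle for the renewal operator — which preserves ordering, including through the nonlocal birth boundary term since $b>0$ — propagates this in time. Some care is needed to phrase the comparison principle for the renewal equation with the integral boundary condition, but this is classical (cf.~\cite{BP}); one can work along characteristics of $A$ and use a Gr\"onwall-type argument on $\int b\,|p_\eps-\overline\gamma Q|_+\Phi\,dx$, or invoke the Perron--Frobenius/Doeblin structure.

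For part (ii), the heart of the matter, I would introduce the relative entropy functional
\begin{equation*}
\mathcal{H}_\eps(t,y) := \int_{\R_+}\left|\frac{p_\eps}{Q}(t,x,y)-\gamma^0(y)\right|Q(x,y)\Phi(x,y)\,dx,
\end{equation*}
and compute its time derivative. Writing $h_\eps = p_\eps/Q - \gamma^0(y)$, the equation for $p_\eps$ translates (using that $Q$ and $\Phi$ are the direct and dual eigenfunctions with eigenvalue $\Lambda$) into a transport-type equation for $h_\eps$ of the form $\eps\D_t h_\eps + A\,\D_x h_\eps = 0$ away from the boundary, with the boundary coupling encoded through $\Phi$; the standard GRE computation then gives $\eps\,\D_t\!\int |h_\eps|Q\Phi\,dx \le 0$, i.e. a dissipation inequality, with the boundary terms combining into a nonpositive contribution thanks to Jensen's inequality and the normalization $\int b Q\Phi$-relations. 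Hence $\mathcal{H}_\eps(t,y)$ is nonincreasing in $t$, so $\mathcal{H}_\eps(t,y)\le\mathcal{H}_\eps(0,y)$, and assumption~\eqref{hypp} states exactly that $\mathcal{H}_\eps(0,y)\to0$ uniformly in $y$. This yields the claimed uniform-in-$(t,y)$ convergence. The constants $\underline\gamma,\overline\gamma$ from part (i) guarantee the integrals are well-defined and that $Q\Phi\in L^1$ provides the needed integrability ($Q\in L^1$ by Theorem~\ref{theoremeeigenS}, and $\Phi$ bounded — or at least $Q\Phi\in L^1$ — from the explicit formula for $\Phi$).

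The main obstacle I anticipate is making the GRE dissipation computation fully rigorous in the presence of the nonlocal renewal boundary condition: one must verify that the boundary contributions at $x=0$ assemble correctly (this is where the dual normalization and the convexity of $r\mapsto|r-\gamma^0|$ enter) and that there is enough regularity/integrability to differentiate $\mathcal{H}_\eps$ under the integral sign and to integrate by parts with no leftover flux at $x=\infty$ — the latter being controlled by the super-exponential decay of $Q$ forced by $d(x,y)\to+\infty$ in~\eqref{hypothesed}. A secondary subtlety is obtaining \emph{uniformity in $y$}: since all constants ($A_0,A_\infty,\underline r,\overline r$) in the assumptions are $y$-independent, the entropy inequality is uniform, and uniformity of the conclusion then reduces to the uniformity already assumed in~\eqref{hypp}; I would make this bookkeeping explicit.
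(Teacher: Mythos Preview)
Your proposal is correct and follows essentially the same route as the paper: derive the autonomous renewal equation for $p_\eps$, use the comparison principle for (i), and for (ii) show that the GRE functional $\int_{\R_+}|p_\eps/Q-\gamma^0|\,Q\Phi\,dx$ is nonincreasing in $t$ and conclude via~\eqref{hypp}. The paper makes two of your anticipated subtleties explicit: the flux at $x=\infty$ is handled by computing $Q\Phi$ in closed form and using $F(y,\Lambda(y))=1$ to see it vanishes, and the boundary dissipation reduces to $|\langle v_\eps\rangle|\le\langle|v_\eps|\rangle$ (the paper calls this Cauchy--Schwarz, which is your Jensen for the modulus).
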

The main ingredients of the proof are as follows: in a first step we prove that $\frac{p_\eps}{Q}$ is bounded. Then we use an entropy method to prove that the convergence occurs in a weighted $L^1$ space. Our approach follows closely \cite{PMSMBP1,BP}.
\\

\begin{proof}[Proof of Theorem \ref{thmCV}]
\textit{First step: bounds on $\frac{p_\eps}{Q}$.} From \eqref{systemesansmutation} and  \eqref{hamiltonienS}-\eqref{factorization}, we infer that $p_\eps$ satisfies
\begin{equation}\label{equaP}
\left\{\begin{array}{r l}
&\eps \D_tp_\eps(t,x,y)+\D_x\left[A(x,y)p_\eps(t,x,y)\right]+\left[d(x,y)-\Lambda(y)\right]p_\eps(t,x,y)=0,\\
&\displaystyle A(x=0,y)p_\eps(t,x=0,y)=\int_{\R^+}{b(x',y)p_\eps(t,x',y)d x'}. \\
\end{array}\right.
\end{equation}
Moreover $Q$ satisfies the same linear equation. Assumption~\eqref{initialp} and the comparison principle for transport equations prove the first statement of Theorem~\ref{thmCV}.
\\

\noindent \textit{Second step: Entropy inequality}. In the sequel, we consider
\begin{equation}\label{defqandv}
v_\eps(t,x,y):=\frac{p_\eps(t,x,y)}{Q(x,y)}-\gamma^0 (y).
\end{equation} 
We also define, for any function $f(t,x,y)$, the average
\begin{equation*}
\langle f \rangle(t,y):=\int_{\R_+}{f(t,x,y) b(x,y)Q(x,y) dx},
\end{equation*}
and we notice that a direct computation gives
	\begin{equation}\label{vectnorm}
		\left\{\begin{aligned}
	&\eps\D_t v_\eps(t,x,y)+A(x,y)\D_x v_\eps (t,x,y)=0,\\
	&v_\eps(t,x=0,y)=\langle v_\eps \rangle(t,y).
		\end{aligned}\right.
	\end{equation}
Thus we have, in distribution sense
\begin{equation}\label{vectnorm2}
\eps\D_t \vert v_\eps(t,x,y)\vert +A(x,y)\D_x \vert v_\eps (t,x,y)\vert=0.
\end{equation}
We now introduce the generalized relative entropy 
$$E_\eps(t,y)=\int_{\R_+}\vert v_\eps(t,x,y)\vert Q(x,y)\Phi(x,y) dx$$ 
and compute 
\begin{equation*}
\begin{aligned}
\eps\D_tE_\eps(t,y) & = \int_{\R_+}{ \eps\vert\D_tv_\eps(t,x,y)\vert Q(x,y)\Phi(x,y) }dx\\
& = -\int_{\R_+}{A(x,y)\vert \D_xv_\eps(t,x,y)\vert Q(x,y)\Phi(x,y) }dx\\
& = -\left[\vert v_\eps\vert A Q\Phi\right]^\infty_{x=0}+\int_{\R_+}{\vert v_\eps\vert\D_x\left( AQ\Phi \right)}dx.
\end{aligned}
\end{equation*}
The function $|v_\eps| AQ\Phi$ converges to 0 when $x$ goes to infinity,. Indeed, $v_\eps$ is bounded from the assertion (i) of Theorem~\ref{thmCV}, $A$ is bounded and, since an explicit computation of $Q\Phi$ gives
\begin{equation}
Q(x,y)\Phi(x,y)=\frac{\Phi(0,y)}{A(x,y)}\left(1-\int_0^x\frac{b(x',y)}{A(x',y)}\exp\left(\int_0^{x'}\frac{\Lambda(y)-d(x'',y)}{A(x'',y)}d x''\right)dx'\right),
\end{equation}
from the equality $F(y,\Lambda(y))=1$ in~\eqref{derlambdaS}, we deduce that $Q\Phi$ goes to 0 as $x \to \infty$. Then,
\begin{equation*}
\eps\D_tE_\eps(t,y) = \Phi(0,y)\left\vert\langle v_\eps\rangle\right\vert(t,y)-\Phi(0,y)\int_{\R_+}{bQ\vert v_\eps\vert dx}.
\end{equation*}
Hence, using the Cauchy-Schwarz inequality,
\begin{equation}\label{equaproof}
\eps\D_tE_\eps(t,y) =-{\Phi(0,y)}\left(\langle \vert v_\eps\vert\rangle-\left\vert\langle v_\eps \rangle\right\vert\right) \leq 0.
\end{equation}
Therefore $0\leq E_\eps(t,y)\leq E_\eps(0,y),$ and we conclude for (ii) using~\eqref{hypp}.
\end{proof}

\begin{Rmq}
As $v_\eps$ is bounded, the convergence stated in (iii) occurs in all weighted $L^p$ norms. Namely, for all $p\geq1$
	\begin{equation*}
		\int_{\R_+}\left\vert \frac{p_\eps}{Q}(t,x,y)-\gamma^0(y)\right\vert^p Q\Phi dx\longrightarrow0,\quad\text{when }\eps\to0.
	\end{equation*} 
\end{Rmq}

\subsubsection{Convergence of $u_\eps$}\label{SectionA}
Integrating (\ref{hamiltonienS}), we obtain the explicit formula
\begin{equation}\label{equation_ue}
u_\eps(t,y)=u_\eps^0(y)-t\Lambda(y)-\int_0^t\rho_\eps (s) ds.
\end{equation}
Hence, by \eqref{initialu} and Proposition \ref{saturation}, after extraction of a subsequence, $u_\eps$ converges locally uniformly to a function $u$ which is given by
\begin{equation}\label{equation_u}
u(t,y)=u^0(y)-t\Lambda(y)-\int_0^t\rho (s) ds.
\end{equation}
Next, we claim that 
\begin{equation}\label{constraint}
\sup\limits_{y\in\R^n} u(t,y) =0,\quad\forall t\geq 0.
\end{equation}Indeed, we recall $m_\eps(t,x,y)=p_\eps(t,x,y)e^{\frac{u_\eps(t,y)}{\eps}}$ and $p_\eps(t,x,y)$ converges in virtue of Theorem~\ref{thmCV}. If there existed a point $y_0$ for some $t$ such that $u(t,y_0)>0$, $\rho_\eps(t)$ would diverge, which is a contradiction with Proposition~\ref{saturation}. In a similar way, $\sup_y u(t,\cdot)<0$ would imply $\rho_\eps(t)\to0$, which also contradicts Proposition~\ref{saturation}. Hence \eqref{constraint} must hold.

Thus, up to extraction of a subsequence, $m_\eps$ weakly converges to a measure which support is included in the set $\{(t,y)\in[0,+\infty)\times\R^n | u(t,y)=0\}$.
Outside of this set, we know that the population density vanishes locally uniformly as $\eps\to0$.

Finally we prove the convergence of the whole sequence $u_\eps$. From \eqref{equation_u} and \eqref{constraint} we obtain
\begin{equation}\label{max_point}
\int_0^t  \rho (s) ds = \sup_{y \in \R^n} [u^0(y)-t\Lambda(y)],\quad \forall t \geq 0.
\end{equation}
The uniqueness of the limit function $\rho$ is therefore ensured, which implies that the full sequence $\rho_\eps$ converges to $\rho$. Then, the convergence of the full family $u_\eps$ follows from \eqref{equation_ue}. Hence the statements (i),(iii) and (iv) of Theorem~\ref{Theorem1}.

\subsection{Properties of concentration points}

Since we can explicitly integrate \eqref{hamiltonienS} to obtain \eqref{equation_u}, we are able to identify the points where the population concentrates, which are the points where $u$ vanishes.

\begin{prop}
	Let $t\in (0, \infty)$ and $\bar y(t)\in\R^n$ such that $u(t,\bar y(t))=0$, where $u$ is given in \eqref{equation_u}. As $\bar y(t)$ is a maximum point of $u(t,\cdot)$, it satisfies
\begin{equation}\label{PointsConcentration}
\nabla_y u^0(\bar y(t))= t \nabla_y \Lambda \left(\bar y(t)\right),
\end{equation} 
and we have 
	\begin{equation}\label{eqmax}
		u^0(\bar y(t))=\int_0^t\rho(t')dt'-t\rho(t).
	\end{equation}
\end{prop}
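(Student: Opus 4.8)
The plan is to read everything off the explicit formula \eqref{equation_u} for $u$ together with the maximisation identity \eqref{max_point}. Introduce
\[
g(t,y):=u^0(y)-t\,\Lambda(y),
\]
so that $u(t,y)=g(t,y)-\int_0^t\rho(s)\,ds$ and, by \eqref{max_point}, $\int_0^t\rho(s)\,ds=\sup_{y\in\R^n}g(t,y)$ for every $t\ge0$. Since $\sup_y u(t,\cdot)=0$ by \eqref{constraint} while $u(t,\bar y(t))=0$ by hypothesis, $\bar y(t)$ is a global maximum point of $u(t,\cdot)$, hence equivalently of $g(t,\cdot)$, over all of $\R^n$; in particular it is an interior maximiser and $\int_0^t\rho=g(t,\bar y(t))$.

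First I would prove \eqref{PointsConcentration}. By Theorem~\ref{theoremeeigenS} the map $y\mapsto\Lambda(y)$ is $\mathcal C^1$, so $g(t,\cdot)$ is differentiable at every point where $u^0$ is; writing the first-order optimality condition $\nabla_y g(t,\bar y(t))=0$ at the interior maximiser immediately gives $\nabla_y u^0(\bar y(t))=t\,\nabla_y\Lambda(\bar y(t))$. This requires $u^0$ to be differentiable at $\bar y(t)$; it is unambiguous under the strict-concavity hypothesis of assertion (v) of Theorem~\ref{Theorem1} — where $\bar y(t)$ is the unique maximiser — or if one simply assumes $u^0\in\mathcal C^1$, and in general the identity holds at any maximum point of differentiability.

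Next I would prove \eqref{eqmax}. Evaluating $u(t,\bar y(t))=0$ via \eqref{equation_u} gives $u^0(\bar y(t))=t\,\Lambda(\bar y(t))+\int_0^t\rho(s)\,ds$, so it remains to establish $\Lambda(\bar y(t))=-\rho(t)$. For this, note that $t\mapsto h(t):=\sup_y g(t,y)$ is a supremum of functions affine in $t$, hence convex in $t$, whereas $h(t)=\int_0^t\rho$ is Lipschitz with $h'(t)=\rho(t)$ for a.e.\ $t$. At a point $t_0$ of differentiability of $h$, the affine map $t\mapsto g(t,\bar y(t_0))=u^0(\bar y(t_0))-t\,\Lambda(\bar y(t_0))$ lies below $h$ and touches it at $t_0$, hence is the supporting line of $h$ at $t_0$; comparing slopes yields $-\Lambda(\bar y(t_0))=h'(t_0)=\rho(t_0)$. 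Equivalently one may differentiate the constant function $t\mapsto u(t,\bar y(t))\equiv0$ and use \eqref{HJSlimit}: $0=\D_t u(t,\bar y(t))+\nabla_y u(t,\bar y(t))\cdot\dot{\bar y}(t)=\D_t u(t,\bar y(t))=-\Lambda(\bar y(t))-\rho(t)$, the $y$-gradient dropping out because $\bar y(t)$ maximises $u(t,\cdot)$. Substituting $t\,\Lambda(\bar y(t))=-t\,\rho(t)$ into the previous display gives $u^0(\bar y(t))=\int_0^t\rho(s)\,ds-t\,\rho(t)$, which is \eqref{eqmax}.

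The main difficulty is purely one of regularity: $\rho\in L^\infty$ and $u^0$ is only Lipschitz, so the first-order condition and the supporting-line (or envelope) computation are a priori justified only at times $t$ where the convex-in-$t$ value function $h$ is differentiable and at maximisers where $u^0$ is differentiable — a set of full measure. To obtain both identities for every $t\in(0,\infty)$ one should invoke the strict concavity of $u^0$ and $-\Lambda$ from assertion (v): then the maximiser $\bar y(t)$ is unique, depends continuously on $t$ through the relation \eqref{PointsConcentration}, and Danskin's theorem makes $h$ differentiable at every $t>0$ with $h'(t)=-\Lambda(\bar y(t))$. I expect this reduction — not any computation — to be the delicate point.
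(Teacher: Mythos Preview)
Your proposal is correct and follows essentially the same route as the paper: evaluate \eqref{equation_u} at the maximiser to get $u^0(\bar y(t))=t\Lambda(\bar y(t))+\int_0^t\rho$, use the first-order condition $\nabla_y u(t,\bar y(t))=0$ for \eqref{PointsConcentration}, and then show $\Lambda(\bar y(t))=-\rho(t)$ to obtain \eqref{eqmax}. The one difference worth noting is that the paper obtains $\Lambda(\bar y(t))=-\rho(t)$ in a single stroke: since $\sup_y u(t,\cdot)=0$ for all $t$, the point $(t,\bar y(t))$ is a global maximum of $u$ over $(0,\infty)\times\R^n$, hence $\partial_t u(t,\bar y(t))=0$ directly, without invoking $\dot{\bar y}(t)$ or a supporting-line argument; your supporting-line/Danskin discussion is a valid (and more regularity-aware) alternative, but the paper does not engage with the regularity issues you raise and simply treats the computation formally.
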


\begin{proof}
	From equation~\eqref{equation_u} we derive
	\begin{equation}\label{resultat1}
		u^0(\bar y(t))=t\Lambda(\bar y(t))+\int_0^t\rho(t')d t'.
	\end{equation}
Besides, $\bar y(t)$ is a maximum point of $u(t,\cdot)$, therefore $\nabla_y u(t,\bar y(t))=0$ which proves \eqref{PointsConcentration}. Moreover $\D_t u(t,\bar y(t))=0$, and using (\ref{hamiltonienS}) we obtain 
	\begin{equation}\label{resultat2}
		\Lambda(\bar y(t))=-\rho(t).
	\end{equation}
Thus, combining \eqref{resultat1} and \eqref{resultat2}, we obtain equation \eqref{eqmax}
\end{proof}
At this stage, the concentration of the population density on a single trait $\bar y(t)$ cannot be concluded yet because the above relation defines a hypersurface. There are two frameworks in which one can prove that the population is monomorphic, that is, the population converges in measure toward a Dirac mass located on a unique point $\bar y(t)$ at each time $t\geq0$. The first framework assumes that $y$ is one dimensional, and $y\mapsto\Lambda(y)$ is strictly monotonic. The second assumes, for $y\in\R^d$, that $u_\eps^0(\cdot)$ and $-\Lambda(\cdot)$ are strictly concave uniformly in $\eps$. The interested reader can refer to \cite{GB.BP:07} and \cite{AL.SM.BP} for a complete analysis of these two cases.

In the framework of uniform strict concavity, we obtain the additional result of uniform regularity on $u_\eps$ and $u$, which enables to rigorously derive a form of canonical equation in the language of adaptive dynamics. This canonical equation gives the dynamics of the selected trait, that is, the evolution of the concentration point in an evolutionary time scale.

\begin{thm}\label{thmED}
Assume that $u^0$ and $-\Lambda$ are strictly concave in a neighborhood of $\bar y^0$ defined in~\eqref{initialu}. Then $u(t,\cdot)$, given in \eqref{equation_u}, is locally strictly concave and there exists $T>0$ such that for all $ t\in(0,T)$,  $u(t,\cdot)$  reaches its maximum $0$ on a unique point $\bar y(t)$. Moreover $t \mapsto \bar y(t)\in\mathcal{C}^1\left(0,T\right)$ and its dynamics is described by the equation
\begin{equation}\label{dynamique}
\dot{\bar{y}}(t)=\left(\nabla_y^2u(t,\bar{y}(t))\right)^{-1}\cdot\nabla_y\Lambda\left(\bar{y}(t)\right), \quad \bar y (0) = \bar y^0.
\end{equation}
\end{thm}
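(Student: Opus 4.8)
The plan is to work directly with the explicit formula \eqref{equation_u}, $u(t,y)=u^0(y)-t\Lambda(y)-\int_0^t\rho(s)\,ds$, and exploit that the last term does not depend on $y$. Hence, for each fixed $t$, the concavity of $u(t,\cdot)$ and the set of its maximizers coincide with those of $y\mapsto u^0(y)+t\,(-\Lambda(y))$. Since both $u^0$ and $-\Lambda$ are strictly concave on a fixed ball $B$ around $\bar y^0$, and (under the natural smoothness ensuring $u^0,\Lambda\in\mathcal C^2$ near $\bar y^0$, which for $\Lambda$ refines the $\mathcal C^1$ regularity of Theorem~\ref{theoremeeigenS}) $u(t,\cdot)$ inherits this regularity from \eqref{equation_u}, the Hessian $\nabla_y^2 u(t,y)=\nabla_y^2 u^0(y)-t\,\nabla_y^2\Lambda(y)$ is, for every $t\geq 0$, a sum of two negative definite matrices on $B$, hence negative definite there. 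This gives at once the local strict concavity of $u(t,\cdot)$ and the fact that $u(t,\cdot)$ has at most one critical point in $B$.

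Next I would locate the maximizer by the implicit function theorem applied to $H(t,y):=\nabla_y u(t,y)=\nabla_y u^0(y)-t\,\nabla_y\Lambda(y)$. By \eqref{initialu}, $\bar y^0$ is the maximum of $u^0=u(0,\cdot)$, so $H(0,\bar y^0)=0$, while $\partial_y H(0,\bar y^0)=\nabla_y^2 u^0(\bar y^0)$ is negative definite, hence invertible. The theorem yields $T_1>0$ and a $\mathcal C^1$ curve $t\mapsto\bar y(t)\in B$, $t\in[0,T_1)$, with $\bar y(0)=\bar y^0$ and $H(t,\bar y(t))=0$; by the local strict concavity, $\bar y(t)$ is the unique critical point of $u(t,\cdot)$ in $B$ and the unique maximizer of $u(t,\cdot)$ over $\overline B$. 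It then remains to check that this local maximizer is the global one and carries value $0$. For this I would combine the bounds $\underline{r}\leq-\Lambda\leq\overline{r}$ from \eqref{derlambdaS2}, the property $u^0\leq 0$ with $u^0<0$ on $\R^n\setminus\{\bar y^0\}$, and the decay/integrability of $m_\eps^0$ at infinity (which lets us shrink $B$ so that $\sup_{y\notin B}u^0(y)=:-2\eta<0$): for $y\notin B$ one gets $u(t,y)\leq -2\eta+t\,\overline{r}-\int_0^t\rho(s)\,ds$, which is $<-\int_0^t\rho(s)\,ds\leq u(t,\bar y^0)\leq\max_{\overline B}u(t,\cdot)$ as soon as $t<T_2:=2\eta/\overline{r}$. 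Thus, for $t<T:=\min(T_1,T_2)$, the global supremum of $u(t,\cdot)$ is attained in $B$, hence equals $u(t,\bar y(t))$, and since $\sup_y u(t,y)=0$ was already established in \eqref{constraint}, we conclude $u(t,\bar y(t))=0$ and the uniqueness of the maximizer on $(0,T)$.

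Finally, to derive the canonical equation \eqref{dynamique}, I would differentiate in $t$ the identity $\nabla_y u^0(\bar y(t))=t\,\nabla_y\Lambda(\bar y(t))$ (which is exactly \eqref{PointsConcentration}), legitimate because $\bar y\in\mathcal C^1$ and the data are $\mathcal C^2$. This gives $\nabla_y^2 u^0(\bar y(t))\cdot\dot{\bar y}(t)=\nabla_y\Lambda(\bar y(t))+t\,\nabla_y^2\Lambda(\bar y(t))\cdot\dot{\bar y}(t)$, i.e. $\big(\nabla_y^2 u^0(\bar y(t))-t\,\nabla_y^2\Lambda(\bar y(t))\big)\dot{\bar y}(t)=\nabla_y\Lambda(\bar y(t))$; recognizing the left factor as $\nabla_y^2 u(t,\bar y(t))$, which is negative definite, hence invertible, by the first step, yields \eqref{dynamique}, with $\bar y(0)=\bar y^0$ by construction. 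The same argument applied to $u_\eps$ through the analogous formula \eqref{equation_ue}, under uniform-in-$\eps$ strict concavity of $u_\eps^0$, gives the uniform regularity of $u_\eps$ alluded to in the text.

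The routine parts are the implicit function theorem step and the concluding differentiation; the step I expect to be the genuine obstacle is showing that the maximizer furnished locally by the implicit function theorem is in fact global with value $0$ — that is, controlling $u(t,\cdot)$ away from $\bar y^0$ — together with securing the $\mathcal C^2$ regularity of $\Lambda$ and $u^0$ near $\bar y^0$ needed to write \eqref{dynamique}. This is precisely where the hypotheses on the coefficients and on the initial data, and the a priori bound $\sup_y u=0$, must be invoked carefully; shrinking $T$ further if necessary is expected to suffice.
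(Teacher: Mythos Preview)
Your proposal is correct and follows essentially the same route as the paper: use the explicit formula \eqref{equation_u} to see that $u(t,\cdot)$ is strictly concave as a sum of strictly concave functions, apply the implicit function theorem to $\nabla_y u(t,\bar y(t))=0$ starting from $\bar y^0$, and differentiate this identity in $t$ to obtain \eqref{dynamique}. Your treatment is in fact more careful than the paper's short argument on two points the paper leaves implicit: the $\mathcal C^2$ regularity of $u^0$ and $\Lambda$ needed to write Hessians, and the passage from the local critical point furnished by the implicit function theorem to the global maximizer with value $0$ (the paper simply asserts that strict concavity forces $u(t,\bar y(t))=\max_y u(t,y)=0$, effectively reading the hypothesis as global rather than merely local strict concavity).
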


\begin{proof}
We are interested in the solutions $\bar y(t)\in\R^n$ of
\begin{equation}\label{EquationBut}
\nabla_y u(t,\bar y(t))=0.
\end{equation}
Note that $u$ is strictly concave, because $u^0$ and $-\Lambda$ are. Therefore, such a $\bar y(t)$ must satisfy $u(t,\bar y(t))=\max_y u(t,y)=0$.

From \eqref{initialu} we know that at initial time there exists a unique solution $\bar y^0$ of \eqref{EquationBut}. Besides, as $u$ is strictly concave, $\nabla_y^2 u$ is invertible.
Hence, thanks to the implicit functions theorem, there exists $T>0$ such that  for all $t\in(0,T)$, there exists a unique $\bar y(t)\in\R^n$ satisfying \eqref{EquationBut}. Moreover, $t\mapsto \bar y (t)$ is a $\mathcal{C}^1$ function, and then differentiating \eqref{EquationBut} with respect to $t$, we obtain, using (\ref{equation_u}),
 \begin{equation*}
 	0=\frac{\mathrm{d}}{\mathrm{d}t}\left[\nabla_y u(t,\bar y (t))\right]=-\nabla_y\Lambda(\bar y (t))+\left(\nabla_y^2 u(t,\bar y (t)\right)\cdot \dot{\bar y}(t),
 \end{equation*}
and \eqref{dynamique} follows.
\end{proof}

\begin{Rmq}
Note that we have
\begin{equation}\label{diffeq_lambda}
\frac{\de}{\de t}\left[\Lambda\left(\bar y(t)\right)\right]=\left(\nabla_y\Lambda(\bar{y}(t))\right)\cdot\left(\nabla_y^2u(t,\bar{y}(t))\right)^{-1} \cdot \left(\nabla_y\Lambda(\bar{y}(t))\right).
\end{equation}
Then, we deduce that $\frac{\de}{\de t}\left[\Lambda\left(\bar y(t)\right)\right]\leq 0$. Therefore, if at initial time $\bar y^0$ belongs to a potential well of $\Lambda$, then $\bar y (t)$ remains bounded. Thus Theorem \ref{thmED} holds globally in time and $\bar y (t)$ converges to a local minimum of $\Lambda$ when $t$ goes to infinity.
\end{Rmq}

From Theorem \ref{thmED} we infer the statement (v) of Theorem~\ref{Theorem1}. We also give the following additional results. The first one is derived directly from \eqref{derlambdaS2}, the second one from \eqref{resultat2} and \eqref{diffeq_lambda}.
\begin{coro}
	Under the same hypothesis as in Theorem \ref{thmED}, the critical points for evolutionary dynamics satisfy $\nabla_y F(y^*,\Lambda(y^*))=0$.
\end{coro}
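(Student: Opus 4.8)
The plan is to unwind the definition of a ``critical point for evolutionary dynamics'': it should be an equilibrium $y^*$ of the canonical equation~\eqref{dynamique}, i.e.\ a value at which $\dot{\bar y}=0$. First I would observe that, under the hypotheses of Theorem~\ref{thmED}, $u(t,\cdot)$ is locally strictly concave near $\bar y(t)$, so the Hessian $\nabla_y^2 u(t,\bar y(t))$ is negative definite, hence invertible. Reading off~\eqref{dynamique}, it follows that $\dot{\bar y}(t)=0$ if and only if $\nabla_y\Lambda(\bar y(t))=0$. Thus a critical point $y^*$ of the evolutionary dynamics is characterized by the single condition $\nabla_y\Lambda(y^*)=0$.

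Next I would invoke the identity~\eqref{derlambdaS2} from Theorem~\ref{theoremeeigenS}, namely
\[
\nabla_y\Lambda(y)=-\frac{\nabla_yF(y,\Lambda(y))}{\D_\lambda F(y,\Lambda(y))},
\]
which is available because $y\mapsto\Lambda(y)$ is $\mathcal C^1$ and $F$ is smooth in $(y,\lambda)$. Since~\eqref{derlambdaS} guarantees $\D_\lambda F(y,\Lambda(y))>0$, the denominator never vanishes, so $\nabla_y\Lambda(y^*)=0$ is equivalent to $\nabla_yF(y^*,\Lambda(y^*))=0$. Combining this with the characterization obtained in the first step yields $\nabla_yF(y^*,\Lambda(y^*))=0$, which is the assertion.

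There is essentially no analytic obstacle; the only points needing care are making the notion of ``critical point'' precise — it is the vanishing of the right-hand side of the canonical ODE~\eqref{dynamique} — and checking that the invertibility of $\nabla_y^2u(t,\bar y(t))$ is at our disposal, which is exactly what the local strict concavity assumption of Theorem~\ref{thmED} provides. One may equivalently state the result independently of the dynamics: $\nabla_y\Lambda$ and $\nabla_yF(\cdot,\Lambda(\cdot))$ have the same zero set because they are negatively proportional through the strictly positive factor $\D_\lambda F(\cdot,\Lambda(\cdot))$.
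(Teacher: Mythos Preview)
Your argument is correct and matches the paper's approach: the corollary is stated as following directly from~\eqref{derlambdaS2}, and your proof simply spells out that step, combining the canonical equation~\eqref{dynamique} (with the invertibility of the Hessian from strict concavity) to get $\nabla_y\Lambda(y^*)=0$, and then using~\eqref{derlambdaS2} with $\D_\lambda F>0$ to conclude $\nabla_yF(y^*,\Lambda(y^*))=0$.
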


\begin{coro}\label{CoroNonExtinction}
	Under the same hypothesis as in Theorem \ref{thmED}, we have $t\mapsto\rho(t)\in\mathcal{C}^1(0,T)$ and  $\dot\rho(t)\geq0$ for all $t\in(0,T)$.
\end{coro}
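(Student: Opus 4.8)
The plan is to establish the two assertions of Corollary~\ref{CoroNonExtinction} directly from the explicit formulas and identities already obtained, so that essentially no new machinery is required beyond Theorem~\ref{thmED} and the relations \eqref{resultat2} and \eqref{diffeq_lambda}.

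\textbf{Regularity of $\rho$.} First I would recall from \eqref{resultat2} that, on the interval $(0,T)$ where the concentration point $\bar y(t)$ is well defined and unique, one has $\rho(t)=-\Lambda(\bar y(t))$. By Theorem~\ref{theoremeeigenS} the map $y\mapsto\Lambda(y)$ is $\mathcal{C}^1$, and by Theorem~\ref{thmED} the map $t\mapsto\bar y(t)$ is $\mathcal{C}^1(0,T)$; composing, $t\mapsto\rho(t)$ is $\mathcal{C}^1(0,T)$ as a composition of $\mathcal{C}^1$ maps. (One should be slightly careful that \eqref{resultat2} was derived for the limit $\rho$ obtained after extraction; but since uniqueness of $\rho$ has already been established in Section~\ref{SectionA}, the identity holds for the full limit.)

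\textbf{Monotonicity of $\rho$.} Differentiating $\rho(t)=-\Lambda(\bar y(t))$ gives $\dot\rho(t)=-\frac{\de}{\de t}\left[\Lambda(\bar y(t))\right]$. Now I would invoke \eqref{diffeq_lambda}, which expresses this derivative as the quadratic form $\left(\nabla_y\Lambda(\bar y(t))\right)\cdot\left(\nabla_y^2 u(t,\bar y(t))\right)^{-1}\cdot\left(\nabla_y\Lambda(\bar y(t))\right)$. Since $u^0$ and $-\Lambda$ are strictly concave near $\bar y^0$, Theorem~\ref{thmED} guarantees that $u(t,\cdot)$ is locally strictly concave, hence $\nabla_y^2 u(t,\bar y(t))$ is symmetric negative definite, and therefore so is its inverse. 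Consequently the quadratic form evaluated on the vector $\nabla_y\Lambda(\bar y(t))$ is $\leq 0$, which yields $\frac{\de}{\de t}\left[\Lambda(\bar y(t))\right]\leq 0$ and hence $\dot\rho(t)\geq 0$ for all $t\in(0,T)$, as claimed. This reproduces precisely the inequality already noted in the Remark following Theorem~\ref{thmED}.

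\textbf{Main obstacle.} There is no substantial analytic difficulty here: the corollary is a formal consequence of identities established earlier. The only point deserving genuine care is the sign/definiteness bookkeeping — verifying that local strict concavity of $u(t,\cdot)$ indeed forces $\nabla_y^2 u$ to be negative definite (not merely nonpositive) so that its inverse exists and the quadratic form has a strict sign, and confirming that this holds on the full interval $(0,T)$ supplied by Theorem~\ref{thmED} rather than only at $t=0$. Once that is in place, the biological interpretation is immediate: along the evolutionary trajectory the effective fitness $-\Lambda(\bar y(t))$ can only increase, so the total population size $\rho(t)$ is nondecreasing, i.e. the population does not go extinct during the adaptation phase.
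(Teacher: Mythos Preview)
Your proposal is correct and follows exactly the paper's approach: the paper simply notes that the corollary is derived from \eqref{resultat2} and \eqref{diffeq_lambda}, which is precisely what you do (combining $\rho(t)=-\Lambda(\bar y(t))$, the $\mathcal{C}^1$ regularity of $\bar y$ and $\Lambda$, and the sign of the quadratic form via strict concavity). Your write-up is in fact more detailed than the one-line justification the paper provides.
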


\section{Case with mutations} \label{secCaseM}

We turn to the model \eqref{equa} including mutations. We use the same approach 
as in the previous section, that is, we write 
${m_\eps(t,x,y)=p_\eps(t,x,y)e^{\frac{u_\eps(t,y)}{\eps}}}$ and insert this form 
in~\eqref{equa}. We obtain 
\begin{equation} \label{equap_mut}
\left\{\begin{array}{l}
\displaystyle
\eps \D_t p_\eps(t,x,y) + \D_x\left[A(x,y)p_\eps(t,x,y)\right] 
+d(x,y)p_\eps(t,x,y) \phantom{\int}
\\
\displaystyle \hspace{4cm} =-(\rho_\eps(t)+\D_t u_\eps(t,y))p_\eps(t,x,y), 
\phantom{\int}
\\
\displaystyle 
A(x=0,y)\,p_\eps\left(t,x=0,y\right) \phantom{\int}
\\
\displaystyle \hspace{2cm}  =\frac{1}{\eps^n}\int_{\R^n}\int_ {\R_+}
{M(\frac{y'-y}{\eps})b(x',y')p_\eps(t,x',y')e^{\frac{u_\eps(t,y')-u_\eps(t,y)}{
\eps}}dx'dy'}, 
\\
\displaystyle \rho_\eps(t)=\int_{\R^n}\int_{\R_+}{m_\eps(t,x,y)dx dy},
\\
\displaystyle p_\eps(t=0,x,y)=p_\eps^0(x,y)>0. \phantom{\int}
\end{array}\right.
\end{equation}
With the change of variable $z=\frac{y'-y}{\eps}$, the renewal term is written 
as
\begin{multline}\label{boundarycondeps}
\displaystyle A(x=0)\,p_\eps(t,x=0,y)\\
=\int_{\R^n}\int_{\R_+}{M(z)e^{\frac{u_\eps(t,y+\eps 
z)-u_\eps(t,y)}{\eps}}b(x',y+\eps z)p_\eps(t,x',y+\eps z)dx'dz}.
\end{multline}
By taking formally the limit $\eps \to 0$, we obtain
\begin{equation*}
A(x=0)p(t,x=0,y)=\int_{\R^n}{M(z)e^{\nabla_y u(t,y) \cdot z}dz\int_{\R_+}b(x',y)p(t,x',y)dx'}.
\end{equation*}
Denoting
\begin{equation}
\eta(t,y):=\int_{\R^n}{M(z)e^{\nabla_y u(t,y) \cdot z}dz},
\end{equation}
the formal limit of \eqref{equap_mut} is written as
\begin{equation*}
\left\{\begin{array}{ll}
\D_x\left[A(x,y)p(t,x,y)\right] +d(x,y)p(t,x,y)=-(\rho(t)+\D_t u(t,y))p(t,x,y),
\\[1mm]
\displaystyle A(x=0)p(t,x=0,y)=\eta(t,y) \int_{\R_+}b(x',y)p(t,x',y)dx',
\\[1mm]
\displaystyle \rho(t)=\int_{\R^n}\int_{\R_+}{m(t,x,y)dx dy},
\\[1mm]
p(t=0,x,y)=p^0(x,y)>0, \quad u(t=0,y)=u^0(y).
\end{array}\right.
\end{equation*}
With this form, one can consider the following eigenproblem: for fixed $(y,\eta)\in\R^n\times(0,+\infty)$, find $(\Lambda(y,\eta),Q(x,y,\eta))$, solution of
\begin{equation}\label{valeurpropre}
\left\{\begin{array}{ll}
\D_x\left[A(x,y)Q(x,y,\eta)\right]+d(x,y)Q(x,y,\eta)=\Lambda(y,\eta)Q(x,y,\eta),\\[1mm]
\displaystyle A(x=0,y)Q(x=0,y,\eta)=\eta\int_{\R_+}{b(x',y)Q(x',y,\eta)dx'},\\[1mm]
\displaystyle Q(x,y,\eta)> 0, \quad \int_{\R_+} b(x,y) Q(x,y,\eta)dx=1.
\end{array}\right.
\end{equation}

 Using this eigenproblem, we will firstly compute the formal limit $u$ of the sequence $u_\eps$, and prove that it satisfies the following Hamilton-Jacobi equation 
\begin{equation}\label{eqfinale}
	\left\{\begin{aligned}
	&\D_t u(t,y)=-\Lambda\left(y,\int_{\R^n} M(z)e^{\nabla_y u(t,y)\cdot z}dz\right)-\rho(t),\quad t\geq 0, y\in\R^n,\\
	&u(0,y)=u^0(y), \   \    y\in\R^n.\\
	\end{aligned}\right.
\end{equation}
In this way, we formally recover the limit profile $p$ using \eqref{valeurpropre} with $\eta=\eta(t,y)$.
Back to the question of adaptive dynamics, $\Lambda(y,\eta(t,y))$ defines the effective fitness of the population with trait $y$.\\

In what follows, we study this limit problem and construct a solution $u$.
Actually the convergence of $p_\eps$ towards the solution $Q$ of the eigenproblem \eqref{valeurpropre} is an unsolved question. Indeed because of the particular form of the boundary condition \eqref{boundarycondeps}, we do not know how to study the asymptotic of $p_\eps$ as $\eps \to 0$. However, we construct a sequence $u_\eps$ from an approximation problem of \eqref{eqfinale} that is well defined and we prove it converges to the solution of \eqref{eqfinale} in the viscosity sense.

To begin with, we state the saturation of the population density, and the existence and uniqueness of the eigenelements of \eqref{valeurpropre}.



\subsection{Saturation and stationary problem}
As in the case without mutations in the previous section, it still holds that the total population is bounded.

\begin{prop}\label{saturation2}
We assume \eqref{hypothesed}--\eqref{hypM1bis} and \eqref{initialp}.
Then there exist two constants $\rho_m,\rho_M >0$ such that 
\begin{equation*}
\forall t\geq0,\   0<\rho_m\leq\rho_\eps(t)\leq\rho_M.
\end{equation*}
where $\rho_m:=\min (\underline{r}, \underline \rho^0)$ and $\rho_M:=\max (\bar r, \bar \rho^0)$. Hence, after extracting a subsequence, $\rho_\eps$ converges to a function $\rho$ in {weak*-$L^\infty(0,+\infty)$}. 
\end{prop}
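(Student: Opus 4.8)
The plan is to integrate equation~\eqref{equa} over all the structuring variables so as to obtain a closed scalar differential inequality for the total mass $\rho_\eps(t)$, and then to conclude by comparison with logistic ordinary differential equations.

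First I would record the qualitative properties of $m_\eps$ that make the manipulations below licit. Since $m_\eps^0>0$, $b>0$ and $M\ge0$, the comparison principle for the transport equation~\eqref{equa} gives $m_\eps\ge0$. More importantly, $m_\eps(t,\cdot,y)$ must decay super-exponentially in age: by~\eqref{initialp} the initial profile is dominated by $Q(\cdot,y)$, which by the explicit formula~\eqref{expQ} decays super-exponentially thanks to the growth assumption~\eqref{hypothesed} on $d$, and this decay is propagated along the characteristics of the operator $\eps\D_t+\D_x(A\,\cdot)+(\rho_\eps+d)$. Combined with the bound~\eqref{hypM1bis} at $t=0$, this guarantees $\rho_\eps(t)<\infty$ and, crucially, that the age flux $A(x,y)m_\eps(t,x,y)$ vanishes as $x\to+\infty$ for a.e.\ $y$.

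Next I would integrate the first line of~\eqref{equa} in $x\in\R_+$ and then in $y\in\R^n$. The transport term leaves only the boundary contribution $-A(x{=}0,y)\,m_\eps(t,x{=}0,y)$ at the origin; after integration in $y$, the change of variables $z=\tfrac{y'-y}{\eps}$, Fubini's theorem, and the fact that $M$ is a probability kernel ($M\ge0$, $\int_{\R^n}M(z)\,dz=1$) transform it into the total birth rate,
\[
\int_{\R^n}A(x{=}0,y)\,m_\eps(t,x{=}0,y)\,dy=\int_{\R^n}\int_{\R_+}b(x',y')\,m_\eps(t,x',y')\,dx'\,dy'.
\]
Together with the quadratic mortality term $\rho_\eps(t)\int_{\R^n}\int_{\R_+}m_\eps=\rho_\eps(t)^2$, this produces the identity
\[
\eps\,\frac{\de}{\de t}\rho_\eps(t)=\int_{\R^n}\int_{\R_+}\bigl(b(x,y)-d(x,y)\bigr)m_\eps(t,x,y)\,dx\,dy-\rho_\eps(t)^2 .
\]

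Finally, using $m_\eps\ge0$ and the two-sided bound~\eqref{hypM2} on $b-d$, the right-hand side lies between $\rho_\eps(\underline r-\rho_\eps)$ and $\rho_\eps(\overline r-\rho_\eps)$, so $\rho_\eps$ is squeezed between the solutions of the autonomous scalar equations $\eps\dot r=r(\underline r-r)$ and $\eps\dot r=r(\overline r-r)$. A standard comparison argument for these logistic ODEs yields $\min(\underline r,\rho_\eps^0)\le\rho_\eps(t)\le\max(\overline r,\rho_\eps^0)$ for every $t\ge0$, whence $\rho_m\le\rho_\eps(t)\le\rho_M$ by~\eqref{hypM1bis}. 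The weak-$\star$ convergence of a subsequence then follows from the Banach--Alaoglu theorem applied to the bounded family $(\rho_\eps)$ in $L^\infty(0,+\infty)=\bigl(L^1(0,+\infty)\bigr)^\star$, and the limit $\rho$ inherits the bounds $\rho_m\le\rho\le\rho_M$. I expect the decay/integrability step to be the only real obstacle: the vanishing of the age flux at $x=+\infty$ and the finiteness of all the integrals genuinely rely on~\eqref{hypothesed} and~\eqref{initialp}, whereas the rest is a routine integration followed by an ODE comparison.
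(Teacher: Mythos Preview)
Your proof is correct and follows essentially the same route as the paper: integrate~\eqref{equa} in $(x,y)$, use the decay of $A\,m_\eps$ at $x=+\infty$ (via~\eqref{initialp} and the explicit profile~\eqref{expQ}) and the fact that $M$ is a probability kernel to obtain $\eps\dot\rho_\eps=\int\!\!\int(b-d)m_\eps-\rho_\eps^2$, then conclude by comparison with the logistic ODE using~\eqref{hypM2}. Your write-up is, if anything, slightly more explicit about the ODE comparison and the Banach--Alaoglu step than the paper's appendix.
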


We now establish the existence and uniqueness of the eigenelements in \eqref{valeurpropre}. Thus we introduce the associated dual problem: find $\Phi(x,y,\eta)$ solution of
\begin{equation}\label{EquationDualeBis}
\left\{\begin{aligned}
&A(x,y)\D_x\Phi(x,y,\eta)+\left[\Lambda(y,\eta)-d(x,y)\right]\Phi(x,y,\eta)=-\eta b(x,y)\Phi(0,y,\eta),\\
&\int_{\R^+}{Q(x,y,\eta)\Phi(x,y,\eta)dx}=1.
\end{aligned}\right.
\end{equation}
We also recall the definition \eqref{definitionF} for the function $F$.
The proof of the following theorem is given in Appendix~\ref{eigenappendix}.

\begin{thm}\label{theoremeeigen} 

We assume \eqref{hypothesed}--\eqref{bornesA}. Given $y\in\mathbb{R}^n$ and $\eta\in\mathbb{R}_+$, there exists a unique triplet $(\Lambda(y,\eta), Q(x,y,\eta), \Phi(x,y,\eta))$ solution of \eqref{valeurpropre} and \eqref{EquationDualeBis}. The map $x\mapsto Q(x,y,\eta)$ is bounded and integrable, $y \mapsto\Lambda(y,\eta)$ is $\mathcal{C}^1$ and we have
\begin{equation}\label{deriveelambda}
	\D_\lambda F>0,\quad F(y,\Lambda(y,\eta))=\frac{1}{\eta},
\end{equation}
\begin{equation}\label{detlambda}
\nabla_y\Lambda(y,\eta)=-\frac{\nabla_y F(y,\Lambda(y,\eta))}{\D_\lambda F(y,\Lambda(y,\eta))},\quad
\D_\eta\Lambda(y,\eta)=-\frac{1}{\eta^2\D_\lambda F(y,\Lambda(y,\eta))}<0.
\end{equation}
\end{thm}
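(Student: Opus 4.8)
The plan is to follow exactly the same strategy as in the proof of Theorem~\ref{theoremeeigenS}, which is deferred to Appendix~\ref{eigenappendix}, since the only change is the multiplicative constant $\eta$ in the boundary condition. First I would solve the first-order linear ODE for $x\mapsto Q(x,y,\eta)$ coming from the first line of \eqref{valeurpropre}: since $\D_x[A Q] + dQ = \Lambda Q$ is linear in $Q$, its solution is determined up to a multiplicative constant by
\begin{equation*}
Q(x,y,\eta)=\frac{C}{A(x,y)}\exp\left(-\int_0^x\frac{d(x',y)-\Lambda(y,\eta)}{A(x',y)}dx'\right),
\end{equation*}
exactly as in \eqref{expQ}. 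Assumptions \eqref{hypothesed} and \eqref{bornesA} ensure this is positive, bounded and in $L^1(0,\infty)$, because the integrand in the exponential behaves like $d(x',y)/A(x',y)\to+\infty$. Plugging this expression into the boundary condition $A(0,y)Q(0,y,\eta)=\eta\int_{\R_+}bQ\,dx$ and using the normalization $\int b(x,y)Q(x,y,\eta)dx=1$ yields the compatibility relation $C = \eta\, C\, F(y,\Lambda(y,\eta))$ after simplification by $C$, i.e. $F(y,\Lambda(y,\eta))=1/\eta$, which is \eqref{deriveelambda}; the normalization then fixes $C$ uniquely.

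Next I would establish $\D_\lambda F>0$, which is the monotonicity that makes the equation $F(y,\lambda)=1/\eta$ uniquely solvable in $\lambda$. Differentiating \eqref{definitionF} under the integral sign,
\begin{equation*}
\D_\lambda F(y,\lambda)=\int_{\R_+}\frac{b(x,y)}{A(x,y)}\left(\int_0^x\frac{dx'}{A(x',y)}\right)\exp\left(-\int_0^x\frac{d(x',y)-\lambda}{A(x',y)}dx'\right)dx>0,
\end{equation*}
and \eqref{bornesA}--\eqref{hypM2} guarantee the integral converges. Since $F(y,\cdot)$ is continuous, strictly increasing, and one checks $F(y,\lambda)\to 0$ as $\lambda\to-\infty$ and $F(y,\lambda)\to+\infty$ as $\lambda\to+\infty$ (using the bound $b-d\geq\underline r$, one gets growth once $\lambda>$ some threshold), for every $\eta>0$ there is a unique $\Lambda(y,\eta)$ with $F(y,\Lambda(y,\eta))=1/\eta$. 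The bound $\underline r\le -\Lambda\le\overline r$ from \eqref{derlambdaS2} carries over verbatim for $\eta=1$, and for general $\eta$ the sign relations follow. The regularity $y\mapsto\Lambda(y,\eta)\in\mathcal C^1$ and the gradient formulas in \eqref{detlambda} come from the implicit function theorem applied to $F(y,\Lambda)=1/\eta$: differentiating in $y$ gives $\nabla_yF + \D_\lambda F\,\nabla_y\Lambda=0$, hence the first formula; differentiating in $\eta$ gives $\D_\lambda F\,\D_\eta\Lambda=-1/\eta^2$, hence $\D_\eta\Lambda=-1/(\eta^2\D_\lambda F)<0$. Finally, $\Phi$ is obtained by solving its own first-order linear ODE \eqref{EquationDualeBis} explicitly (the source term involves the unknown constant $\Phi(0,y,\eta)$, which is then pinned down by the normalization $\int Q\Phi\,dx=1$), again in complete parallel to the construction of $\Phi$ in the mutation-free case.

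The only genuinely new point compared with Theorem~\ref{theoremeeigenS} is bookkeeping the $\eta$-dependence, and I expect the main (minor) obstacle to be verifying that all the integrals defining $F$, $\D_\lambda F$, $Q$ and $Q\Phi$ remain finite and that the limiting behaviours $F(y,\lambda)\to 0,+\infty$ hold under assumptions \eqref{hypothesed}--\eqref{bornesA} — but this is exactly the content already checked in the appendix for the $\eta=1$ case, and it is uniform enough in $\eta$ on compact subsets of $(0,\infty)$ that nothing changes. In particular there is no difficulty of a PDE nature here: everything reduces to one-dimensional ODE analysis and the scalar implicit relation $F(y,\Lambda)=1/\eta$. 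I would therefore simply remark that the proof is identical to that of Theorem~\ref{theoremeeigenS} upon replacing the boundary normalization constant $1$ by $\eta$, and refer to Appendix~\ref{eigenappendix} for the details.
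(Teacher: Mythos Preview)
Your proposal is correct and follows essentially the same approach as the paper's own proof in Appendix~\ref{eigenappendix}: solve the linear ODE for $Q$ explicitly, derive the scalar compatibility relation $F(y,\Lambda)=1/\eta$ from the boundary condition, use $\D_\lambda F>0$ together with $F(y,-\infty)=0$, $F(y,+\infty)=+\infty$ for existence and uniqueness of $\Lambda$, apply the implicit function theorem for the $\mathcal{C}^1$ regularity and the derivative formulas~\eqref{detlambda}, and finally write down $\Phi$ explicitly with its normalization. The paper even fixes the constant $C=\eta$ in the formula for $Q$, exactly as your normalization argument yields.
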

In the sequel we consider the effective Hamiltonian (fitness)
\begin{equation}\label{DefinitionHamiltonian}
	H(y,p):=- \Lambda \big(y,\eta(p) \big),\quad\eta(p):=\int_{\R^n}{M(z)e^{p \cdot z}dz}>0.
	\end{equation}
Before constructing a solution to the associated Hamilton-Jacobi equation in the next section, we state the following result, which is proved in Appendix~\ref{HConvexity}.

\begin{prop}\label{PropConvexity}
The mapping $p\mapsto H(y,p)$ is convex, for all $y\in\R^n$.
\end{prop}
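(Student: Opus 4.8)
The plan is to show the convexity of $p \mapsto H(y,p) = -\Lambda(y,\eta(p))$ by combining two monotonicity facts: the dependence of $\Lambda$ on $\eta$ and the dependence of $\eta$ on $p$. First I would recall from Theorem~\ref{theoremeeigen}, specifically \eqref{deriveelambda}, that $\Lambda(y,\eta)$ is determined implicitly by $F(y,\Lambda(y,\eta)) = 1/\eta$, with $\D_\lambda F > 0$. From this relation I can compute $\D_\eta \Lambda = -1/(\eta^2 \D_\lambda F) < 0$, which is already recorded in \eqref{detlambda}. The key additional observation is that $\eta \mapsto -\Lambda(y,\eta)$ is in fact convex in $\eta$: differentiating $F(y,\Lambda(y,\eta)) = 1/\eta$ twice in $\eta$ and using $\D_\lambda F > 0$ together with the sign of $\D_{\lambda\lambda} F$ should give monotonicity of $\D_\eta \Lambda$. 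Actually, the cleaner route is to note that $\eta \mapsto 1/\eta$ is convex and decreasing on $(0,\infty)$, that $F(y,\cdot)$ is increasing, and to track how the composition behaves; alternatively, one writes $-\Lambda$ as a function of $1/\eta$ via the inverse of $F(y,\cdot)$ and checks convexity there. I would keep this step short and lean on whichever sign computation is most transparent.

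Second I would handle the map $p \mapsto \eta(p) = \int_{\R^n} M(z) e^{p\cdot z}\,dz$. This is the classical log-convexity statement for moment generating functions: by Hölder's inequality (or Cauchy–Schwarz applied to $e^{p\cdot z/2}$ against $e^{q\cdot z/2}$ weighted by $M(z)\,dz$), the function $p \mapsto \log \eta(p)$ is convex on $\R^n$. Equivalently $\eta$ itself is log-convex, hence in particular convex, and it is positive everywhere since $M \geq 0$ is a probability kernel.

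The final step is to compose: $H(y,p) = g(\eta(p))$ where $g(\eta) := -\Lambda(y,\eta)$. Here one must be careful, because the composition of convex functions is convex only when the outer function is additionally nondecreasing — and $g$ is in fact \emph{decreasing} (since $\D_\eta \Lambda < 0$ gives $g' > 0$... let me restate: $g(\eta) = -\Lambda(y,\eta)$ has $g'(\eta) = -\D_\eta\Lambda > 0$, so $g$ is increasing). So the correct chain is: $g$ is increasing, and I will have shown $g$ is convex in $\eta$ and $\eta(\cdot)$ is convex in $p$; then for an increasing convex $g$ composed with a convex $\eta$, the composition $p \mapsto g(\eta(p))$ is convex. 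This settles Proposition~\ref{PropConvexity}. In dimension $n$ one does this along arbitrary line segments $p = p_0 + s\,q$, reducing everything to the one-variable statements above.

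The main obstacle is the convexity of $g(\eta) = -\Lambda(y,\eta)$ in $\eta$ — the sign of $\D_\eta \Lambda$ is given for free by \eqref{detlambda}, but its monotonicity (the second derivative) is not stated and must be extracted from the implicit equation $F(y,\Lambda) = 1/\eta$. The computation requires knowing the sign of $\D_{\lambda\lambda} F$, or rearranging so that it drops out. From the explicit formula \eqref{definitionF}, $\D_\lambda f(x,y,\lambda) = f(x,y,\lambda)\int_0^x \frac{dx'}{A(x',y)} \geq 0$ and likewise $\D_{\lambda\lambda} f = f \big(\int_0^x \frac{dx'}{A}\big)^2 \geq 0$, so $F$ is increasing and convex in $\lambda$; feeding this, together with convexity of $\eta \mapsto 1/\eta$, through the implicit function relation yields that $-\Lambda$ is convex in $\eta$. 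This is the one place where a genuine (though short) calculation is unavoidable; the rest is bookkeeping with standard convexity-preserving operations.
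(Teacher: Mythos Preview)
Your decomposition has a genuine gap at the claim that $g(\eta) = -\Lambda(y,\eta)$ is convex in $\eta$. Differentiating $F(y,\Lambda(y,\eta)) = 1/\eta$ twice in $\eta$ gives
\[
\D_\lambda F \cdot \D^2_{\eta\eta}\Lambda \;=\; \frac{2}{\eta^3} - \frac{\D^2_{\lambda\lambda}F}{\eta^4(\D_\lambda F)^2},
\]
so $g'' = -\D^2_{\eta\eta}\Lambda \geq 0$ would require $F\,\D^2_{\lambda\lambda}F \geq 2(\D_\lambda F)^2$ (using $\eta = 1/F$). The Cauchy--Schwarz inequality on the integral representation of $F$ only yields $F\,\D^2_{\lambda\lambda}F \geq (\D_\lambda F)^2$, off by a factor of $2$. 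This is not a mere technicality: for instance with $A\equiv 1$, $d(x)=x$, $b\equiv 1$ one has $F(\lambda)\sim e^{\lambda^2/2}$, and then $(\log F)'' = 1 < \lambda^2 = ((\log F)')^2$ for $|\lambda|>1$, so $g$ is \emph{not} convex there. Your composition argument (increasing convex $g$ composed with convex $\eta(\cdot)$) therefore breaks down.

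The easy repair is to change the intermediate variable from $\eta$ to $\log\eta$. Setting $h(\ell) := -\Lambda(y,e^\ell)$, a direct computation gives $h'(\ell) = 1/(\eta\,\D_\lambda F) > 0$ and
\[
h''(\ell) \;=\; \frac{\D^2_{\lambda\lambda}F - \eta(\D_\lambda F)^2}{\eta^2(\D_\lambda F)^3} \;\geq\; 0,
\]
this time using exactly the Cauchy--Schwarz bound $\eta(\D_\lambda F)^2 \leq \D^2_{\lambda\lambda}F$. Since $p\mapsto\log\eta(p)$ is convex (the log-convexity you already noted) and $h$ is increasing and convex, $H(y,p) = h(\log\eta(p))$ is convex. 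The paper's own proof does not decouple at all: it differentiates $F(y,\Lambda(y,\eta(p))) = 1/\eta(p)$ directly in $p_i$ and combines the two Cauchy--Schwarz inequalities (one for $F$ in $\lambda$, one for $\eta$ in $p$) in a single chain, so the two log-convexities are exploited jointly rather than sequentially.
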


\subsection{The Hamilton-Jacobi equation}
\paragraph{} 
\label{par:}

Here we consider the Hamilton-Jacobi equation \eqref{eqfinale} that we may write from \eqref{DefinitionHamiltonian} as
\begin{equation*}
	\left\{
	\begin{aligned}
	&\D_t u(t,y)=H(y,\nabla_y u)-\rho(t),\\
	&u(0,y)=u^0(y), \   \    y\in\R^n.
	\end{aligned}\right.
\end{equation*}
Our goal is to build a solution to this equation. Therefore, we introduce $u_\eps$ solution of an approximate problem motivated by the form in \eqref{equap_mut}, which reads 
\begin{equation}\label{eqhamil}
\left\{\begin{array}{l}
\displaystyle \D_t u_\eps(t,y)=-\Lambda\left(y,\int_{\R^n}{M(z)e^{\frac{u_\eps(t,y+\eps z)-u_\eps(t,y)}{\eps}}d z}\right)-\rho_\eps(t),\\
u_\eps(0,y)=u_\eps^0(y), \   \    y\in\R^n.
\end{array}\right.
\end{equation} 
To simplify the Hamiltonian in equation \eqref{eqhamil}, we set
$
	U_\eps(t,y):=u_\eps(t,y)+\int_0^t\rho_\eps(t')d t,
$
which satisfies
\begin{equation}\label{HJUe}
	\D_t U_\eps(t,y)=-\Lambda\left(y,\int_{\R^n}{M(z)e^{\frac{U_\eps(t,y+\eps z)-U_\eps(t,y)}{\eps}}d z}\right).
\end{equation}
For clarity, we set 
\begin{equation*}
	\eta_\eps(t,y)=\int_{\R^n} M(z)e^{\frac{U_\eps(t,y+\eps z)-U_\eps(t,y)}{\eps}}dz.
\end{equation*}
We state the following theorem, which is the main result of this section. The set of assumptions $(\mathcal{H})$ is presented below.
\begin{thm}\label{stability}
Assuming $(\mathcal{H})$ there exists a unique solution $U_\eps$ to \eqref{HJUe}. Furthermore, $U_\eps$ converges locally uniformly to a function $U$ which is a viscosity solution of the equation
\begin{equation}\label{HJU}
	\D_t U(t,y)=H(y,\nabla_y U)=-\Lambda\left(y,\int_{\R^n}{M(z)e^{\nabla_y U \cdot z}d z}\right).
\end{equation}
\end{thm}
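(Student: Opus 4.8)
The plan is to prove Theorem~\ref{stability} in two stages: first, a well-posedness and a priori estimate stage for the nonlocal equation \eqref{HJUe}, and second, a passage-to-the-limit stage using the half-relaxed limits technique of Barles--Perthame. For the first stage, I would rewrite \eqref{HJUe} in integrated form as a fixed-point problem: given a candidate $U_\eps$, the right-hand side depends on $U_\eps$ only through the nonlocal quantity $\eta_\eps(t,y)=\int M(z)e^{(U_\eps(t,y+\eps z)-U_\eps(t,y))/\eps}dz$, and $\lambda\mapsto\Lambda(y,\lambda)$ is smooth and monotone by Theorem~\ref{theoremeeigen}. The key a priori bound is a uniform Lipschitz estimate in $y$: starting from the initial Lipschitz bound $k_0$ in \eqref{initialu}, one shows by a comparison/Gronwall argument that $\mathrm{Lip}_y(U_\eps(t,\cdot))$ stays bounded on bounded time intervals. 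This is the ``uncommon a priori estimate'' alluded to in the abstract: because $\eta(p)=\int M(z)e^{p\cdot z}dz$ grows exponentially in $|p|$, a naive bound is not available, but once the Lipschitz constant is controlled by some $L(t)$ we get $\eta_\eps(t,y)\le\int M(z)e^{L(t)|z|}dz<\infty$ (using a decay assumption on $M$, which must be part of $(\mathcal{H})$), hence $\Lambda$ and $\D_t U_\eps$ are bounded. With these bounds, local-in-time existence and uniqueness of $U_\eps$ follows from a contraction-mapping argument on $C([0,T]\times\R^n)$ for $T$ small, then a standard continuation argument extends it globally.

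For the second stage, I would first record the uniform bounds just obtained: $U_\eps$ is locally uniformly Lipschitz in $y$ and locally uniformly Lipschitz in $t$ (from the bound on $\D_t U_\eps$), hence locally equicontinuous and locally uniformly bounded on $[0,T]\times\R^n$. By Arzel\`a--Ascoli, along a subsequence $U_\eps\to U$ locally uniformly, with $U$ Lipschitz. Then I would check that $U$ is a viscosity solution of \eqref{HJU} by the standard test-function argument: if $\phi$ is smooth and $U-\phi$ has a strict local max at $(t_0,y_0)$, then $U_\eps-\phi$ has a local max at a point $(t_\eps,y_\eps)\to(t_0,y_0)$, at which $\D_t U_\eps(t_\eps,y_\eps)=\D_t\phi(t_\eps,y_\eps)$ (justified since \eqref{HJUe} is an ODE in $t$ for each frozen nonlocal term, or by working in the viscosity sense throughout). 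The delicate point is to pass to the limit in the nonlocal term $\eta_\eps(t_\eps,y_\eps)$: I would use the local-max inequality $U_\eps(t_\eps,y_\eps+\eps z)-U_\eps(t_\eps,y_\eps)\le\phi(t_\eps,y_\eps+\eps z)-\phi(t_\eps,y_\eps)$ to bound $\eta_\eps$ from above by $\int M(z)e^{(\phi(t_\eps,y_\eps+\eps z)-\phi(t_\eps,y_\eps))/\eps}dz\to\int M(z)e^{\nabla_y\phi(t_0,y_0)\cdot z}dz$, using the uniform Lipschitz bound on $U_\eps$ for a dominating function to apply dominated convergence; the matching lower bound at minima is symmetric. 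Monotonicity of $\lambda\mapsto-\Lambda(y,\lambda)$ (indeed $\D_\eta\Lambda<0$ from \eqref{detlambda}, so $-\Lambda$ is increasing in $\eta$) then converts this one-sided control of $\eta_\eps$ into the correct one-sided inequality for $-\Lambda$, yielding the subsolution (resp.\ supersolution) property.

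Finally, to upgrade convergence of the subsequence to convergence of the whole family and to make the statement complete, I would invoke the uniqueness result for \eqref{HJU} proved elsewhere in the paper (the ``new uniqueness result'' for Hamiltonians of exponential growth mentioned in the abstract): since every subsequential limit is a viscosity solution of \eqref{HJU} with the same initial datum $U(0,\cdot)=u^0$ (which passes to the limit by \eqref{initialu}), uniqueness forces all limits to coincide, hence the full family $U_\eps$ converges. The main obstacle I anticipate is the uniform Lipschitz estimate in stage one: controlling $\mathrm{Lip}_y U_\eps$ requires differentiating (or taking finite differences of) the nonlocal equation and showing the exponential nonlinearity does not cause finite-time blow-up of the Lipschitz constant — this is where the precise structure of $(\mathcal{H})$, in particular the tail decay of $M$ and perhaps a bound like $\D_\lambda F>0$ keeping $\Lambda$ from growing too fast in its second argument, has to be used carefully; everything downstream is comparatively routine viscosity-solution bookkeeping.
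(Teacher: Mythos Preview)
Your overall architecture—a priori estimates, then half-relaxed limits and viscosity stability, then uniqueness to close—matches the paper's, and your treatment of the viscosity inequality at test functions (using the max/min inequality for a one-sided bound on $\eta_\eps$, then the monotonicity $\partial_\eta\Lambda<0$) is exactly Proposition~\ref{subsuper}. But you have misidentified the key a priori estimate, and this is a genuine gap. The paper's uniform-in-$\eps$ bound is on $\partial_t U_\eps$, not on $\mathrm{Lip}_y U_\eps$: differentiating \eqref{HJUe} in $t$ yields for $V_\eps=\partial_t U_\eps$ the equation $\partial_t V_\eps=\int K_\eps\,(V_\eps(t,y+\eps z)-V_\eps(t,y))/\eps\,dz$ with $K_\eps\ge0$ and \emph{no source term}, so a maximum principle gives $\|\partial_t U_\eps\|_\infty\le\|\partial_t U_\eps(0,\cdot)\|_\infty$ uniformly (Proposition~\ref{borneV}, made rigorous via a truncation $\phi_R$, which is also how existence for fixed $\eps$ is obtained). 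Differentiating in $y_i$ instead produces a source $-\partial_{y_i}\Lambda(y,\eta_\eps)$, controlled under \eqref{LemmeH} only by $C(1+|y|^{\gamma_1})$; a uniform spatial Lipschitz bound by Gronwall is not available here, and your Arzel\`a--Ascoli step is unjustified.

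Consequently the paper does \emph{not} use equicontinuity: the $\partial_t$-bound plus the initial Lipschitz data give only the growth control $|U_\eps|\le C+Lt+k_0|y|$, which suffices to define the half-relaxed limits $\overline U,\underline U$. Spatial Lipschitz regularity is recovered \emph{a posteriori}, and only for $\underline U$ (Proposition~\ref{apriorigrad}): since $\underline U$ is a supersolution with $|\partial_t\psi|\le L$ at any test-function minimum, the coercivity assumption \eqref{H2} forces $\int M(z)e^{\nabla_y\psi\cdot z}dz\le\bar\eta$, and then $M>0$ bounds $|\nabla_y\psi|$. This Lipschitz continuity of the supersolution is precisely what makes the comparison Theorem~\ref{uniqueness} go through for the exponential-growth Hamiltonian under the mild hypotheses \eqref{LemmeH}. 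The correct logical order is therefore: $\partial_t$-bound $\Rightarrow$ local boundedness $\Rightarrow$ half-relaxed limits and sub/supersolution $\Rightarrow$ Lipschitz for $\underline U$ via \eqref{H2} $\Rightarrow$ comparison $\overline U=\underline U$ $\Rightarrow$ locally uniform convergence of the full family.
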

In other words, we prove a stability result in the language of the viscosity solutions theory (see~\cite{GB:94}) in a situation where the Hamiltonian depends on $\nabla_y U$ with an exponential growth, which is the main difficulty here. The plan of the proof is as follows. 
Firstly we consider the truncated equation associated to \eqref{HJUe}, for which classical results give existence and uniqueness of a global solution. Then we provide a uniform a priori estimate on the time derivative of the solution. It allows us to remove the truncation and to infer a global solution $U_\eps$ of \eqref{HJUe}. This proves the first step.

Secondly, we consider the semi-relaxed limits $\overline{U}:=\limsup U_\eps$ and $\underline{U}:=\liminf U_\eps$, and prove that they are respectively subsolution and supersolution of \eqref{HJU} in the viscosity sense. Then, an assumption of coercivity of $\eta\mapsto\Lambda(y,\eta)$ in \eqref{H2}, allows us to state that $\underline{U}$ is a Lipschitz function. Finally, using an uncommon uniqueness result on the Hamiltonian $H$, we prove that $\overline{U}=\underline{U}$, and conclude that $U_\eps$ converges locally uniformly to a viscosity solution of \eqref{HJU}.

\paragraph{Assumptions $(\mathcal{H})$.}
We assume \eqref{initialu}. In addition, for any compact interval $I$, we assume there exist two constants $L_0$, $L_1>0$, (depending on $I$) such that
\begin{equation}\label{H1}
\forall y\in\R^n, \forall \eta\in I,\quad
\left\{\begin{aligned}
&\vert\Lambda(y,\eta)\vert\leq L_0,\\
&\vert\D_\eta\Lambda(y,\eta)\vert\leq L_1.\\
\end{aligned}\right.
\end{equation}
We also assume
	\begin{equation}\label{H2}
		\vert\Lambda(y,\eta)\vert\to+\infty \text{ when }\eta\to+\infty\text{ or }\eta\to0, \text{ uniformly in } y\in\R^n.
	\end{equation}
Finally, the following assumption is required for our uniqueness result, stated in Theorem~\ref{uniqueness}. For all compact set $K_p \subset \R^n$, we assume there exist $C>0,\gamma_1\in[0,4),\gamma_2\in[0,1)$ such that
	\begin{equation}
		\begin{aligned}\label{LemmeH}
			&\forall y\in\R^n, \forall p \in K_p,
			& \left\{\begin{aligned}
				\vert \nabla_y H(y,p)\vert\leq C\left(1+\vert y\vert^{\gamma_1}\right),\\
				\vert \nabla_p H(y,p)\vert\leq C\left(1+\vert y\vert^{\gamma_2}\right).
			\end{aligned}\right.
		\end{aligned}
	\end{equation}

\subsection{Global existence and a priori estimate} 

This section is devoted to the proof of the following Theorem, which is the first step towards Theorem~\ref{stability}.
\begin{thm}\label{ThmEpsilon}
Assume \eqref{H1}.
Then, for all $\eps>0$, there exists a unique global solution $U_\eps$ to the equation~\eqref{HJUe}, such that $\vert \D_tU_\eps(t,y)\vert\leq L$ for a constant $L>0$, uniformly in $\eps>0,\ t>0,\ y\in\R^n$.
\end{thm}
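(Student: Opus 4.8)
The plan is to handle this in two stages: first build a global solution to a truncated version of \eqref{HJUe} where the Hamiltonian is tamed, then show the uniform bound $|\D_t U_\eps|\le L$, which in turn guarantees that the argument $\eta_\eps(t,y)$ of $\Lambda$ stays in a fixed compact subset of $(0,\infty)$, so the truncation is never active and the truncated solution is genuinely a solution of \eqref{HJUe}. The key structural observation making this work is that \eqref{HJUe} is a transport-type equation in which $\D_t U_\eps$ is determined pointwise by the nonlocal quantity $\eta_\eps(t,y)$, and $\Lambda$ depends monotonically on $\eta$ via $\D_\eta\Lambda<0$ from \eqref{detlambda}; the nonlinearity enters only through the exponential average defining $\eta_\eps$.

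First I would introduce, for a parameter $R>0$, the truncated nonlinearity $\Lambda_R(y,\eta):=\Lambda\big(y,\min(\max(\eta,1/R),R)\big)$, which is bounded (by $L_0$ on the relevant compact interval via \eqref{H1}) and Lipschitz in $\eta$ uniformly; the resulting equation
\begin{equation*}
\D_t U_\eps^R(t,y)=-\Lambda_R\left(y,\int_{\R^n}M(z)e^{\frac{U_\eps^R(t,y+\eps z)-U_\eps^R(t,y)}{\eps}}dz\right),\quad U_\eps^R(0,\cdot)=u_\eps^0,
\end{equation*}
has a global Lipschitz-in-$t$ solution by a standard Banach fixed-point / Cauchy--Lipschitz argument in the space of functions with controlled spatial Lipschitz constant: once one shows the spatial Lipschitz bound is propagated (using that $M$ is a probability kernel, so $\eta_\eps$ is a weighted average of exponentials of Lipschitz increments and one can use that $y\mapsto U_\eps^R(t,y+\eps z)-U_\eps^R(t,y)$ has the same Lipschitz constant in $y$), the right-hand side is a bounded, locally Lipschitz functional of $U_\eps^R$ and ODE theory in a Banach space gives existence, uniqueness and global continuation since the right-hand side is globally bounded by $L_0$.

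Next comes the a priori estimate, which is the heart of the matter. I would differentiate \eqref{HJUe} formally in $t$ (or work with finite differences in time to be rigorous) and estimate $w_\eps:=\D_t U_\eps^R$. Differentiating,
\begin{equation*}
\D_t w_\eps(t,y)=-\D_\eta\Lambda_R\big(y,\eta_\eps\big)\cdot\int_{\R^n}M(z)e^{\frac{U_\eps^R(t,y+\eps z)-U_\eps^R(t,y)}{\eps}}\,\frac{w_\eps(t,y+\eps z)-w_\eps(t,y)}{\eps}\,dz.
\end{equation*}
Write this as $\D_t w_\eps(t,y)=a_\eps(t,y)\big[\langle w_\eps(t,\cdot)\rangle_{y}-w_\eps(t,y)\big]$ where $\langle\cdot\rangle_y$ is a probability average in $z$ against the nonnegative weights $M(z)e^{\cdots}/\eta_\eps$ and $a_\eps=-\D_\eta\Lambda_R/\eps\cdot\eta_\eps \ge 0$. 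This is exactly a (nonlocal, nonautonomous) linear mean-field / Kolmogorov type equation with a maximum principle: $\D_t w_\eps(t,y)\le a_\eps(t,y)\big(\sup_{y'} w_\eps(t,y')-w_\eps(t,y)\big)$, so $\sup_y w_\eps(t,\cdot)$ is nonincreasing in $t$ and likewise $\inf_y w_\eps(t,\cdot)$ is nondecreasing; hence $\|w_\eps(t,\cdot)\|_\infty\le\|w_\eps(0,\cdot)\|_\infty=\|\Lambda_R(\cdot,\eta_\eps(0,\cdot))\|_\infty\le L_0$. This is the ``uncommon a priori estimate'' alluded to in the introduction: the positive sign of $a_\eps$, itself a consequence of $\D_\eta\Lambda<0$, is what makes the estimate uniform in $\eps$ despite the $1/\eps$ in the equation. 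Setting $L:=L_0$ (with $L_0$ the constant from \eqref{H1} associated to the compact interval $I$ to be fixed below) then gives $|\D_t U_\eps^R|\le L$ uniformly in $R,\eps,t,y$.

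Finally I would close the loop on the truncation. From $|\D_t U_\eps^R|\le L$ and $U_\eps^R(0,\cdot)=u_\eps^0$ with $u_\eps^0$ uniformly $k_0$-Lipschitz by \eqref{initialu}, one controls increments $U_\eps^R(t,y+\eps z)-U_\eps^R(t,y)$ in terms of the propagated spatial Lipschitz constant (which, since the spatial Lipschitz bound is preserved, stays at $k_0$), so $\eta_\eps(t,y)=\int M(z)e^{(U_\eps^R(t,y+\eps z)-U_\eps^R(t,y))/\eps}dz$ lies between $\int M(z)e^{-k_0|z|}dz$ and $\int M(z)e^{k_0|z|}dz$, a fixed compact interval $I\subset(0,\infty)$ independent of $\eps$ and $R$ (here one uses that $M$ has sufficiently light tails for these integrals to be finite, which is implicit in the standing assumptions on the mutation kernel). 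Choosing $R$ large enough that $I\subset[1/R,R]$, the truncation is never active, so $U_\eps^R$ solves \eqref{HJUe}; uniqueness for \eqref{HJUe} follows from uniqueness for the truncated equation (any solution of \eqref{HJUe} has the same a priori bounds, hence solves the truncated equation, which has a unique solution). The main obstacle is the a priori estimate: one must recognize the differentiated equation as a maximum-principle-respecting mean-field equation and exploit the sign of $\D_\eta\Lambda$; the rigorous justification requires either a finite-difference-in-time version of this computation or a regularization argument to make the differentiation in $t$ legitimate, plus care that the spatial Lipschitz constant really is propagated (not merely bounded by something growing in $t$) so that $I$ is genuinely $\eps$-independent.
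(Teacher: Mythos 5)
Your a priori estimate is morally the paper's, but your choice of truncation creates a genuine gap that your closing step does not resolve. Let me separate the two.

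The heart of the argument — differentiating in $t$, recognizing $\D_t w_\eps = a_\eps(t,y)\big[\langle w_\eps\rangle_y - w_\eps\big]$ with $a_\eps\ge 0$ because $\D_\eta\Lambda<0$, and concluding a maximum principle for $\D_t U_\eps^R$ — is exactly the paper's formal argument, and the resulting uniform bound $|\D_t U_\eps^R|\le L$ with $L$ depending only on the initial data via \eqref{H1} is correct. (One caveat you wave at but don't resolve: on $\R^n$ the supremum of $w_\eps(t,\cdot)$ need not be attained, so you cannot literally evaluate at the max; the paper handles this with the perturbed test function $\varphi_{\alpha,\beta}(t,y)=V_\eps^R(t,y)-\alpha t-\beta\vert y-y_0\vert$ and a contradiction argument. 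That is a repairable technicality.)

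The real gap is in removing the truncation. You truncate by capping the \emph{argument} $\eta$ of $\Lambda$ in $[1/R,R]$, so to remove the truncation you must show $\eta_\eps(t,y)$ lies in a fixed compact subset of $(0,\infty)$ uniformly in $\eps$, $R$, $t$, $y$. You try to get this by asserting that the spatial Lipschitz constant of $U_\eps^R(t,\cdot)$ is \emph{preserved} at $k_0$, but this is false: differentiating the equation in $y$ produces the source term $-\nabla_y\Lambda(y,\eta_\eps)$, so there is no maximum principle for $\nabla_y U_\eps^R$ and the Lipschitz constant can grow. The bound you actually have, $\vert U_\eps^R(t,y)-u_\eps^0(y)\vert\le Lt$, only yields
\begin{equation*}
\Big\vert \tfrac{U_\eps^R(t,y+\eps z)-U_\eps^R(t,y)}{\eps}\Big\vert \le \tfrac{2Lt}{\eps}+k_0\vert z\vert,
\end{equation*}
which blows up as $\eps\to 0$ — precisely the $\eta_\eps^\pm$ estimate the paper uses \emph{inside} Proposition~\ref{borneV}, which is manifestly not $\eps$-uniform. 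A genuinely $\eps$-uniform bound on $\eta_\eps$ would require the coercivity hypothesis \eqref{H2} (as in Lemma~\ref{LemmePreliminaire1}), which Theorem~\ref{ThmEpsilon} does not assume. The paper sidesteps all of this by truncating the \emph{output}: it composes $-\Lambda$ with a cutoff $\phi_R$ that equals the identity on $[-R/2,R/2]$. Then the uniform time-derivative bound $\vert\D_t U_\eps^R\vert=\vert\phi_R(-\Lambda(\cdot,\eta_\eps))\vert\le L$ by itself shows the truncation is inactive once $R>2L$, with no control on $\eta_\eps$ needed. To fix your proof, adopt the paper's truncation; with yours, the last step as written does not go through.
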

\subsubsection{The truncated problem} 

We first consider a truncated problem associated to \eqref{HJUe}. For a fixed $R>0$, we define the function $\phi_R:\R\to\R$ which is smooth, increasing and satisfies the following conditions:
\begin{itemize}
\item $\phi_R(r)= r \text{ for }r \in[-\frac{R}{2},\frac{R}{2}]$,
\item $\phi_R(r)=R \text{ for }r \geq 2R$,
\item $\phi_R(r)= -R \text{ for } r \leq -2R$,
\item $\phi_R'\geq 0$ is uniformly bounded.
\end{itemize}	
Let $\eps>0$ be fixed. We consider the Cauchy problem
\begin{equation}\label{eqR}
	\left\{
	\begin{aligned}
		&\D_t U^R_\eps(t,y)=\phi_R\left(-\Lambda\left(y,\int_{\R^n}{M(z)e^{\frac{U^R_\eps(t,y+\eps z)-U^R_\eps(t,y)}{\eps}}d z}\right)\right),\\
		&U^R_\eps(0,\cdot)=u^0_\eps.  	
	\end{aligned}\right.
\end{equation}
We state the following result
\begin{Lemme}\label{cauchyR}
Assuming \eqref{H1}, there exists a unique solution of \eqref{eqR}, defined globally in time.
\end{Lemme}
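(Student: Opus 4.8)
The plan is to read \eqref{eqR} as an autonomous differential equation
\begin{equation*}
\dot V = \mathcal{F}_{R,\eps}(V),\qquad V(0)=u^0_\eps,
\end{equation*}
posed in an affine space of continuous functions, and to invoke the Cauchy--Lipschitz (Picard--Lindel\"of) theorem in a Banach space, followed by a globalisation argument. Here, for a continuous function $V:\R^n\to\R$, we set
\begin{equation*}
\mathcal{F}_{R,\eps}(V)(y):=\phi_R\big(-\Lambda(y,\eta_\eps[V](y))\big),\qquad
\eta_\eps[V](y):=\int_{\R^n}M(z)\,e^{\frac{V(y+\eps z)-V(y)}{\eps}}\,dz .
\end{equation*}
Since $\eps>0$ and $R>0$ are fixed throughout, the exponential nonlinearity is tamed: writing the unknown as $V=u^0_\eps+W$ with $W$ bounded --- which is legitimate because \eqref{eqR} forces $|V(t,y)-u^0_\eps(y)|\le Rt$ --- one has
\begin{equation*}
\frac{V(y+\eps z)-V(y)}{\eps}\le k_0|z|+\frac{2\|W\|_{\infty}}{\eps},
\end{equation*}
thanks to the uniform Lipschitz bound \eqref{initialu} on $u^0_\eps$, so that, using the assumed integrability of $M$ against $e^{k_0|z|}$, the quantity $\eta_\eps[V](y)$ is finite and stays in a compact subinterval of $(0,+\infty)$ depending only on $\|W\|_\infty$, $\eps$ and $k_0$. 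I would therefore take as functional framework the complete metric space $X:=u^0_\eps+C_b(\R^n)$, i.e.\ solve for $W=V-u^0_\eps\in C_b(\R^n)$ equipped with the sup-norm.

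Next I would check that $\mathcal{F}_{R,\eps}$ maps $X$ into $C_b(\R^n)$, is globally bounded, and is Lipschitz on bounded sets. Finiteness and continuity of $y\mapsto\eta_\eps[V](y)$ follow from dominated convergence, the integrand being dominated by $e^{2\|W\|_\infty/\eps}M(z)e^{k_0|z|}\in L^1(\R^n)$; since $\eta_\eps[V](y)>0$ and $y\mapsto\Lambda(y,\cdot)$ is $\mathcal{C}^1$ by Theorem~\ref{theoremeeigen}, and $\phi_R$ is smooth, the composition is continuous, and $\|\mathcal{F}_{R,\eps}(V)\|_\infty\le R$ because $|\phi_R|\le R$. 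For the Lipschitz estimate, if $V_i=u^0_\eps+W_i$ with $\|W_i\|_\infty\le C$, the mean value inequality applied to the exponential gives
\begin{equation*}
\big|\eta_\eps[V_1](y)-\eta_\eps[V_2](y)\big|\le\frac{2}{\eps}\,e^{2C/\eps}\Big(\int_{\R^n}M(z)e^{k_0|z|}dz\Big)\,\|W_1-W_2\|_\infty ,
\end{equation*}
and combining this with the bound $|\D_\eta\Lambda(y,\eta)|\le L_1$ from \eqref{H1} on the relevant compact $\eta$-interval and the uniform Lipschitz constant of $\phi_R$, one obtains $\|\mathcal{F}_{R,\eps}(V_1)-\mathcal{F}_{R,\eps}(V_2)\|_\infty\le\kappa(C,\eps,R)\,\|W_1-W_2\|_\infty$. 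The Cauchy--Lipschitz theorem then yields a unique maximal solution $U^R_\eps\in\mathcal{C}^1([0,T^\ast);X)$ of \eqref{eqR} with $U^R_\eps(0)=u^0_\eps$; note that this notion of solution is exactly the pointwise-in-$t$ classical one, since \eqref{eqR} carries no $y$-derivatives.

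Finally I would globalise: since $\|\mathcal{F}_{R,\eps}(V)\|_\infty\le R$ for every $V$, the solution satisfies $\|U^R_\eps(t)-u^0_\eps\|_\infty\le Rt$, so the $C_b$-norm of $U^R_\eps(t)-u^0_\eps$ cannot blow up in finite time; by the blow-up alternative for ODEs with a locally Lipschitz right-hand side this forces $T^\ast=+\infty$, and uniqueness on $[0,+\infty)$ follows from the local Lipschitz bound and Gr\"onwall's lemma. The only genuinely delicate point is the verification that $\mathcal{F}_{R,\eps}$ is a well-defined, locally Lipschitz map on $C_b(\R^n)$ --- equivalently, that the argument $\eta_\eps[V](y)$ of $\Lambda$ stays in a fixed compact subset of $(0,+\infty)$ on which assumption \eqref{H1} is available. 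This is precisely what the truncation $\phi_R$ (which bounds the solution's growth, hence $\|W\|_\infty$) together with the fixed scale $\eps$ provide; at this stage no estimate uniform in $\eps$ or $R$ is required --- that uniformity is the subject of Theorem~\ref{ThmEpsilon}.
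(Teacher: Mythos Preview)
Your proposal is correct and follows exactly the route the paper indicates: the authors' own ``proof'' consists of the single sentence that it ``is based on the Cauchy-Lipschitz Theorem and uses only classical arguments'' and is left to the reader, which is precisely what you have carried out in the affine space $u^0_\eps+C_b(\R^n)$ with the truncation $\phi_R$ supplying the global bound. The one tacit hypothesis you invoke --- the integrability of $M(z)e^{k_0|z|}$ --- is not listed in Section~\ref{SecAssumption} but is implicitly required throughout the paper for $\eta(p)$ and $\eta_\eps$ to make sense, so this is a gap in the paper's stated assumptions rather than in your argument.
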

\noindent The proof is based on the Cauchy-Lipschitz Theorem and uses only classical arguments. It is left to the reader.

\subsubsection{Estimate on the time derivative}
The particular form of \eqref{eqR} allows us to infer uniform a priori estimates on $\D_tU_\eps^R$. It is stated in the following result.
\begin{prop}\label{borneV}
For all $R>0,\eps>0$, we have $$\Vert\D_t U_\eps^R\Vert_\infty\leq\Vert\D_tu^0_\eps\Vert_\infty:=\left\Vert\Lambda(y,\eta_\eps(0,y))\right\Vert_\infty.$$
As a consequence, there exists a positive constant $L$, independent of $R$ and $\eps$ such that
\begin{equation}
\forall \eps >0,\forall R>0, \forall t \geq 0, \forall y \in \R^n, \quad \vert \D_t U_\eps^R (t,y) \vert \leq L.
\end{equation}
\end{prop}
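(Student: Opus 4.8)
The plan is to differentiate the truncated equation~\eqref{eqR} in time and run a maximum principle on $w:=\D_t U_\eps^R$. Since the right-hand side of~\eqref{eqR} is a smooth function of $U_\eps^R$ (composition of $\phi_R$, of $\eta\mapsto\Lambda(y,\eta)$, and of the integral operator $U\mapsto\int M(z)e^{(U(\cdot+\eps z)-U(\cdot))/\eps}\,dz$), the solution produced by Lemma~\ref{cauchyR} is in fact $\mathcal C^2$ in $t$, and differentiating~\eqref{eqR} with respect to $t$ yields the linear nonlocal equation
\[
\D_t w(t,y)=\frac{\phi_R'(\cdot)\,\big(-\D_\eta\Lambda\big)(\cdot)}{\eps}\int_{\R^n} M(z)\,e^{\frac{U_\eps^R(t,y+\eps z)-U_\eps^R(t,y)}{\eps}}\big(w(t,y+\eps z)-w(t,y)\big)\,dz .
\]
All prefactors are nonnegative: $\phi_R'\ge 0$ by construction, $-\D_\eta\Lambda(y,\eta)>0$ by~\eqref{detlambda} in Theorem~\ref{theoremeeigen}, $M\ge0$ as a probability kernel, and the exponential is positive. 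Hence the constant functions are simultaneously sub- and supersolutions of this linear equation, so the comparison principle for it (equivalently, the fact that $\sup_y w(t,\cdot)$ is nonincreasing and $\inf_y w(t,\cdot)$ nondecreasing) gives $\Vert w(t,\cdot)\Vert_\infty\le\Vert w(0,\cdot)\Vert_\infty$ for all $t\ge0$.

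It remains to evaluate the bound at $t=0$. Since $U_\eps^R(0,\cdot)=u_\eps^0$, we have $\eta_\eps^R(0,y)=\eta_\eps(0,y):=\int_{\R^n}M(z)e^{(u_\eps^0(y+\eps z)-u_\eps^0(y))/\eps}\,dz$, whence $w(0,y)=\phi_R\big(-\Lambda(y,\eta_\eps(0,y))\big)$; choosing the truncation so that $|\phi_R(r)|\le|r|$ (always possible, e.g. with $\phi_R'\le1$) gives $\Vert w(0,\cdot)\Vert_\infty\le\Vert\Lambda(\cdot,\eta_\eps(0,\cdot))\Vert_\infty$, which is the first assertion. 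For the uniform constant $L$, I use the $\eps$-independent Lipschitz bound of~\eqref{initialu}: $|u_\eps^0(y+\eps z)-u_\eps^0(y)|\le k_0\eps|z|$, hence $e^{-k_0|z|}\le e^{(u_\eps^0(y+\eps z)-u_\eps^0(y))/\eps}\le e^{k_0|z|}$, and therefore $\eta_\eps(0,y)\in I:=\big[\int M(z)e^{-k_0|z|}dz,\ \int M(z)e^{k_0|z|}dz\big]$, a compact subinterval of $(0,+\infty)$ (finiteness of the exponential moments of $M$ is implicit already in the definition of $\eta(p)$ in~\eqref{DefinitionHamiltonian}). Applying~\eqref{H1} on this interval $I$ yields $|\Lambda(y,\eta_\eps(0,y))|\le L_0$ uniformly in $y$ and $\eps$, so one may take $L:=L_0$.

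The main obstacle is making the maximum-principle step rigorous on the unbounded domain $\R^n$: the truncation already ensures $|w|\le R$, but $\sup_y w(t,\cdot)$ need not be attained, so one must argue through the comparison principle for the linear nonlocal equation using the constants as explicit barriers (or penalize near infinity and pass to the limit). An alternative that avoids both the second $t$-differentiation and this point is to establish a comparison principle directly for the nonlinear equation~\eqref{eqR} — which is order preserving precisely because $\phi_R$ is increasing and $\Lambda(y,\cdot)$ is decreasing — and to apply it to the time-translated solution $U_\eps^R(\cdot+h,\cdot)$: this gives $\Vert U_\eps^R(t+h,\cdot)-U_\eps^R(t,\cdot)\Vert_\infty\le\Vert U_\eps^R(h,\cdot)-u_\eps^0\Vert_\infty$, and dividing by $h$ and letting $h\to0^+$ reproduces the estimate. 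The last displayed consequence (existence of $L$) is then immediate from the two ingredients above.
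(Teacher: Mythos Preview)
Your proposal is correct and follows essentially the same approach as the paper: differentiate~\eqref{eqR} in time and run a maximum principle on $V_\eps^R:=\D_t U_\eps^R$, the sign conditions coming from $\phi_R'\ge0$ and $\D_\eta\Lambda<0$, together with the Lipschitz bound on $u_\eps^0$ and~\eqref{H1} to control the initial data uniformly in $\eps$. The paper's rigorous handling of the unbounded-domain obstacle is exactly the penalization you anticipate: it tests $V_\eps^R(t,y)-\alpha t-\beta|y-y_0|$ (which attains its maximum because $|V_\eps^R|\le R$), obtains $\alpha\le C\beta$ from the differentiated equation at that maximum, and sends $\beta\to0$.
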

\noindent The complete proof is postponed to Appendix~\ref{AppAPrioriEstimate}. However we give the formal idea here. As $R$ is fixed, we simply write $U_\eps$ instead of $U_\eps^R$.
We set $V_\eps(t,y):=\D_t U_\eps(t,y).$
Differentiating \eqref{HJU} with respect to $t$, we obtain
\begin{equation}\label{formal_time}
\D_tV_\eps(t,y)=\int_{\R^n} K_\eps (t,y,z) \left(\frac{V_\eps(t,y+\eps z)-V_\eps(t,y)}{\eps}\right) dz,
\end{equation}
where $K_\eps(t,y,z):=-\D_\eta\Lambda\left(y,\eta_\eps(t,y)\right) 
M(z)e^{\frac{U_\eps(t,y+\eps z)-U_\eps(t,y)}{\eps}} $. Note that, thanks to 
\eqref{detlambda}, $K_\eps$ is positive. Then, if for some $t>0$, $V_\eps 
(t, \cdot)$ reaches its maximum at $\bar y\in\R^n$, we obtain the inequality 
\begin{equation*}
\D_t V_\eps(t, \bar y) =\int_{\R^n} K_\eps (t,\bar y,z) \left(\frac{V_\eps(t,\bar y+\eps z)-V_\eps(t,\bar y)}{\eps}\right) dz \leq 0.
\end{equation*}
Formally, it shows that the maximum value of $V_\eps$ is decreasing with time, that is, 
\[
\sup_y V_\eps(t, y) \leq \sup_y V_\eps(0, y)= \sup_{y}\D_tu^{0}_\eps.
\]
With the same method we show
${\inf_y\D_tU_\eps\geq\inf_y\D_tu^{0}_\eps}$, which completes the first step of the proof.
Then, using \eqref{H1} and that $u^0_\eps$ is a Lipschitz function from \eqref{initialu}, we deduce an estimate on $\D_tU_\eps$, uniform in $R>0$ and $\eps>0$.

\subsubsection{Removing the truncation} 
\label{ssub:}

From Proposition~\ref{borneV},  $\D_t U_\eps^R(t,y)=\phi_R\left(-\Lambda\left(y,\eta_\eps(t,y)\right)\right)$ is bounded uniformly in $R$. As $\phi_{R}\equiv \mathrm{Id}$ on $[-\frac{R}{2},\frac{R}{2}]$, then, for $R$ large enough, $U_\eps^R$ is also solution to the non-truncated problem~\eqref{HJUe}. Conversely, a solution to~\eqref{HJUe} with a bounded time derivative is a solution to the truncated problem~\eqref{eqR} for $R$ large enough. Thus $U_\eps:=U_\eps^R$ is the unique solution of~\eqref{HJUe} with $\Vert \D_t U_\eps\Vert_\infty\leq L$, for $R$ large enough. The proof of Theorem~\ref{ThmEpsilon} is thereby complete.

\subsection{The semi-relaxed limits} 

We assume \eqref{H1}. Thanks to Theorem~\ref{ThmEpsilon}, there exists a constant $C>0$ such that
\begin{equation}\label{croissancet}
\begin{aligned}
		&\vert U_\eps(t,y)\vert \leq \vert u_\eps^0(y)\vert+ Lt \leq C+L t+k_0\vert y \vert, &\forall t>0,\ \forall y\in\R^n,
	\end{aligned}                     
\end{equation}
uniformly in $\eps>0$. This allows us to consider the following semi-relaxed limits (see \cite{GB.BP:88, HI})

\begin{equation}\label{defsupersub}
	\overline{U} (t,y)=\limsup\limits_{
	\substack{
		x\to y\\
		s\to t\\
		\eps\to0}}
	U_\eps(s,x),  \quad
	\underline{U}(t,y)=\liminf\limits_{
	\substack{
		x\to y\\
		s \to t\\
		\eps\to0}}
	U_\eps(s,x).
\end{equation}
Note that accordingly $\underline{U}$ and $\overline{U}$ satisfy the inequality \eqref{croissancet}. More precisely, from the uniform estimate on the time derivative stated in Theorem~\ref{ThmEpsilon} we have
\begin{equation}\label{LipschitzT}
\vert \overline{U}(t,y)-u^0(y)\vert\leq Lt,\quad \vert \underline{U}(t,y)-u^0(y)\vert\leq Lt.
\end{equation}

In this section, we prove
\begin{thm}\label{uniqueness}
Assuming \eqref{H1}--\eqref{LemmeH}, we have $\overline{U}=\underline{U}$. 
\end{thm}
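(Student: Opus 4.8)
\textbf{Proof strategy for Theorem~\ref{uniqueness}.}

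The plan is to establish $\overline{U}=\underline{U}$ via a comparison principle for the Hamilton--Jacobi equation \eqref{HJU}. By construction $\overline{U}\geq\underline{U}$ everywhere, and both satisfy $\overline{U}(0,\cdot)=\underline{U}(0,\cdot)=u^0$ (thanks to the well-preparedness \eqref{initialu} of the initial data and the bound \eqref{LipschitzT}). From the semi-relaxed limit theory (see~\cite{GB.BP:88, HI}), $\overline{U}$ is a viscosity subsolution and $\underline{U}$ is a viscosity supersolution of \eqref{HJU}. Hence it suffices to prove a one-sided comparison: any subsolution lies below any supersolution, given equal initial data. The non-standard feature is that the Hamiltonian $H(y,p)=-\Lambda(y,\eta(p))$ with $\eta(p)=\int M(z)e^{p\cdot z}dz$ has \emph{exponential growth in $p$}, so the classical comparison results (which typically assume linear or at worst superlinear-but-controlled growth, and uniform continuity in $y$ of the form $|H(x,p)-H(y,p)|\leq\omega(|x-y|(1+|p|))$) do not apply directly.

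The key reduction is to first upgrade regularity: I would use the coercivity assumption \eqref{H2} to show that $\underline{U}$ (and similarly $\overline{U}$) is locally Lipschitz in $y$. Indeed, if $\nabla_y U$ were large on a region, then $\eta(\nabla_y U)$ would be large or small, forcing $|\Lambda(y,\eta)|=|{-\D_t U}|\to+\infty$ by \eqref{H2}, contradicting the uniform bound $\|\D_t U_\eps\|_\infty\leq L$ from Theorem~\ref{ThmEpsilon} which passes to the relaxed limits via \eqref{LipschitzT}. More precisely, the bound $|\D_t U|\leq L$ together with the equation forces $\eta_\eps(t,y)$ to stay in a fixed compact subinterval $I\subset(0,\infty)$ on which, by \eqref{H1}, $\Lambda$ and $\D_\eta\Lambda$ are controlled; and since $\eta(p)\to\infty$ as $|p|\to\infty$ in suitable directions while staying bounded constrains $p$ to a compact set $K_p$, one gets a uniform Lipschitz bound $|\nabla_y U_\eps|\leq C$, hence $\overline U,\underline U$ are globally Lipschitz in $y$ with a constant depending only on $L$ and $M$. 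This is the step where \eqref{H2} is essential.

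Once $\overline U$ and $\underline U$ are known to be Lipschitz with a common constant, their gradients live in a \emph{fixed} compact set $K_p$, so only the behavior of $H$ on $\R^n\times K_p$ matters; there the exponential growth is irrelevant and assumption \eqref{LemmeH} provides exactly the structural bounds $|\nabla_y H(y,p)|\leq C(1+|y|^{\gamma_1})$ with $\gamma_1<4$ and $|\nabla_p H(y,p)|\leq C(1+|y|^{\gamma_2})$ with $\gamma_2<1$. I would then run the standard doubling-of-variables argument: for $\alpha,\beta>0$ consider
\begin{equation*}
\sup_{t,x,y}\Big\{\overline{U}(t,x)-\underline{U}(t,y)-\frac{|x-y|^2}{2\alpha}-\beta\psi(x)-\frac{\delta}{T-t}\Big\},
\end{equation*}
where $\psi(x)=\sqrt{1+|x|^2}$ (or a suitable polynomial weight tuned to $\gamma_1,\gamma_2$) handles the unboundedness of the domain, using the linear growth \eqref{croissancet} of $\overline U,\underline U$ to ensure the sup is attained. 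At the maximum point one has $p_x=(x-y)/\alpha+\beta\nabla\psi(x)$ and $p_y=(x-y)/\alpha$; subtracting the sub/supersolution inequalities gives a term $H(x,p_x)-H(y,p_y)$ which splits as $[H(x,p_y)-H(y,p_y)]+[H(x,p_x)-H(x,p_y)]$, estimated by \eqref{LemmeH} as $\lesssim (1+|x|^{\gamma_1})|x-y|+(1+|x|^{\gamma_2})\beta|\nabla\psi(x)|$. The growth exponents $\gamma_1<4$ and $\gamma_2<1$ are precisely what is needed so that, after using $|x-y|^2/\alpha\to0$ and the weight $\psi$ to absorb the $|x|^{\gamma_j}$ factors, one can send $\alpha\to0$ then $\beta\to0$ then $\delta\to0$ and conclude $\overline U\leq\underline U$ on $[0,T]\times\R^n$ for every $T$. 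Combined with $\overline U\geq\underline U$, this yields $\overline U=\underline U$.

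The main obstacle is the unboundedness of $\R^n$ interacting with the $y$-dependence of $H$: because $\nabla_y H$ may grow polynomially in $|y|$, a naive doubling argument does not close, and one must carefully balance the penalization weight $\beta\psi(x)$ against the growth exponents $\gamma_1,\gamma_2$ — this is exactly where the numerical thresholds $\gamma_1<4$, $\gamma_2<1$ enter, and getting the weight and the order of limits right is the delicate part. The Lipschitz-in-$y$ reduction (confining $p$ to $K_p$) is what makes this feasible at all, since without it the comparison would have to contend with the genuine exponential growth of $H$ in $p$, for which no comparison principle is available.
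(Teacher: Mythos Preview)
Your overall strategy---doubling of variables, preceded by a Lipschitz-in-$y$ estimate obtained from the coercivity \eqref{H2} so that the relevant test gradients lie in a fixed compact $K_p$ on which \eqref{LemmeH} applies---is exactly the paper's approach. However, your Lipschitz step has a genuine gap. You attempt to deduce a uniform bound $|\nabla_y U_\eps|\leq C$ from the boundedness of $\eta_\eps(t,y)$, but $\eta_\eps$ is an $\eps$-averaged finite-difference quantity and does \emph{not} control pointwise gradients of $U_\eps$ (a function oscillating on scale~$\eps$ can have bounded $\eta_\eps$ yet unbounded gradient). The paper instead argues directly on $\underline{U}$ at the viscosity level: for any smooth $\psi$ touching $\underline{U}$ from below, the supersolution inequality together with $\D_\eta\Lambda<0$ gives $\int M(z)e^{\nabla_y\psi\cdot z}dz\leq\bar\eta$, hence $\|\nabla_y\psi\|_\infty\leq L$ (Lemma~\ref{lemmegrad}); this is then upgraded to a genuine Lipschitz bound on $\underline{U}$ (Proposition~\ref{apriorigrad}). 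Note that this works \emph{only} for the supersolution: for a subsolution the inequality reverses and yields merely a lower bound on $\eta$, which does not constrain $|\nabla_y\psi|$. Your claim that $\overline{U}$ is Lipschitz is therefore unjustified---and in fact unnecessary, since only Lipschitz continuity of $\underline{U}$ is used to close the argument.

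On the doubling itself, two technical points. First, $\psi(x)=\sqrt{1+|x|^2}$ grows only linearly and cannot coerce against the linear growth \eqref{croissancet} of $\overline{U},\underline{U}$; a quadratic weight is required (the paper takes $\delta(|y|^2+|y'|^2)$, symmetric in both variables). Second, the paper uses a \emph{single} parameter $\delta$ for both the spatial weight and the distance penalty $|y-y'|^2/\delta^2+|t-t'|^2/\delta^2$, and also doubles the time variable. The crux is then the sharp estimate $|y_\delta-y'_\delta|=O(\delta^2)$ of Lemma~\ref{LemmeDivers} (the standard bound would be only $O(\delta)$), obtained precisely from the Lipschitz continuity of $\underline{U}$; combined with $|y_\delta|=O(\delta^{-1/2})$ this produces the error terms $O(\delta^{1/2-\gamma_2/2})+O(\delta^{2-\gamma_1/2})$, which is exactly where the thresholds $\gamma_1<4$, $\gamma_2<1$ enter. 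Your two-parameter scheme ($\alpha\to0$ then $\beta\to0$) could in principle bypass the need for the $O(\delta^2)$ estimate, but you would still need to double time (since $\overline{U}$ is only upper semicontinuous in $y$) and to verify that the maximum is not attained at $t=0$, as the paper does.
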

\noindent This result implies that $U_\eps$ converges locally uniformly to a solution $U$ of equation \eqref{HJU}, which completes the proof of Theorem~\ref{stability}.

\subsubsection{Subsolution and supersolution}
The following proposition is adapted from classical stability results for viscosity solutions of Hamilton-Jacobi equations (see \cite{GB:94}). Note that it slightly differs from the usual framework because of the nonlocal term $\eta_\eps(t,y)$.
\begin{prop}\label{subsuper}
	The semi-continuous functions $\overline{U}$ and $\underline{U}$ defined in \eqref{defsupersub} are respectively subsolution and supersolution of~\eqref{HJU} in the viscosity sense in $(0,\infty)\times\R^n$. Also, for all $T>0$, the viscosity inequalities stand for ${t\in (0,T]}$.
\end{prop}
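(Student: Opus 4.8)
The plan is to run the standard semi-relaxed limits argument for viscosity solutions, taking care of the two nonstandard features: the Hamiltonian in \eqref{HJU} is nonlocal (it involves $U_\eps(t,y+\eps z)$ through $\eta_\eps$) and has exponential growth in the gradient. I will treat the subsolution property for $\overline{U}$; the supersolution property for $\underline{U}$ is entirely symmetric. First I recall that by \eqref{croissancet} the family $U_\eps$ is locally uniformly bounded, so $\overline{U}$ is well defined and upper semicontinuous, and by \eqref{LipschitzT} it satisfies the same growth bound; moreover $\D_t U_\eps$ is bounded uniformly by Theorem~\ref{ThmEpsilon}, which will be the crucial quantitative input.

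Next I take a test function $\varphi\in\mathcal{C}^1\big((0,\infty)\times\R^n\big)$ and assume $\overline{U}-\varphi$ has a strict local maximum at a point $(t_0,y_0)\in(0,\infty)\times\R^n$, with $\overline{U}(t_0,y_0)=\varphi(t_0,y_0)$. By the definition of the $\limsup$ and a standard argument (see \cite{GB:94, bardi-capuzzo, HI}), there exist $\eps_k\to0$ and points $(s_k,x_k)\to(t_0,y_0)$ such that $U_{\eps_k}-\varphi$ attains a local maximum at $(s_k,x_k)$ and $U_{\eps_k}(s_k,x_k)\to\overline{U}(t_0,y_0)$. Since $U_{\eps_k}$ solves \eqref{HJUe} classically in $t$ (it is $\mathcal{C}^1$ in time with bounded derivative) and is touched from above by $\varphi$ in space at $x_k$, I get at $(s_k,x_k)$
\begin{equation*}
\D_t\varphi(s_k,x_k)=\D_t U_{\eps_k}(s_k,x_k)=-\Lambda\!\left(x_k,\int_{\R^n}M(z)e^{\frac{U_{\eps_k}(s_k,x_k+\eps_k z)-U_{\eps_k}(s_k,x_k)}{\eps_k}}dz\right).
\end{equation*}
The key estimate is the two-sided bound on the nonlocal quotient: from the local maximality of $U_{\eps_k}-\varphi$ at $(s_k,x_k)$ one has $U_{\eps_k}(s_k,x_k+\eps_k z)-U_{\eps_k}(s_k,x_k)\leq \varphi(s_k,x_k+\eps_k z)-\varphi(s_k,x_k)$ for $|z|$ in any fixed ball (for $k$ large, so that $x_k+\eps_k z$ stays in the neighborhood), hence by a Taylor expansion $\frac{U_{\eps_k}(s_k,x_k+\eps_k z)-U_{\eps_k}(s_k,x_k)}{\eps_k}\leq \nabla_y\varphi(s_k,x_k)\cdot z+o(1)$ locally uniformly in $z$. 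Combined with the uniform Lipschitz bound in $y$ coming from \eqref{croissancet} — which controls the quotient by $k_0|z|$ globally in $z$ and hence gives a uniform integrable majorant $M(z)e^{k_0|z|}$ (this is where the growth hypothesis on $M$ is used) — dominated convergence yields
\begin{equation*}
\limsup_{k\to\infty}\int_{\R^n}M(z)e^{\frac{U_{\eps_k}(s_k,x_k+\eps_k z)-U_{\eps_k}(s_k,x_k)}{\eps_k}}dz\leq\int_{\R^n}M(z)e^{\nabla_y\varphi(t_0,y_0)\cdot z}dz=\eta(\nabla_y\varphi(t_0,y_0)).
\end{equation*}
Since $\eta\mapsto\Lambda(y,\eta)$ is decreasing by \eqref{detlambda}, passing to the limit in the equation for $U_{\eps_k}$ gives $\D_t\varphi(t_0,y_0)\leq-\Lambda\big(y_0,\eta(\nabla_y\varphi(t_0,y_0))\big)=H(y_0,\nabla_y\varphi(t_0,y_0))$, which is exactly the subsolution inequality for \eqref{HJU}.

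The main obstacle is controlling the nonlocal exponential term uniformly in $z$ when passing to the limit: the pointwise Taylor bound only holds for $z$ in a bounded set, so one genuinely needs the global Lipschitz estimate on $U_\eps$ from \eqref{croissancet} together with the decay of $M$ to produce the integrable dominating function $M(z)e^{k_0|z|}$; without this the integral could blow up. Once this is in place the rest is routine. Finally, to handle the boundary case $t\in(0,T]$ asserted in the last sentence, I repeat the argument allowing the maximum of $\overline{U}-\varphi$ to occur at a point with $t_0=T$: since $U_\eps$ is $\mathcal{C}^1$ in time up to any finite time with the same derivative bound, the one-sided touching at $t_0=T$ still produces $\D_t\varphi(T,y_0)=\D_t U_{\eps_k}(s_k,x_k)$ (using that $s_k\to T$ and one can take $s_k\leq T$), and the same limit passage gives the viscosity inequality at $t=T$; the symmetric argument gives the supersolution property for $\underline{U}$ on $(0,T]$.
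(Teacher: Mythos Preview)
Your argument has a genuine gap at the dominated convergence step. You claim a ``uniform Lipschitz bound in $y$ coming from \eqref{croissancet}'' to control the quotient $\frac{U_{\eps_k}(s_k,x_k+\eps_k z)-U_{\eps_k}(s_k,x_k)}{\eps_k}$ by $k_0|z|$ for large $|z|$. But \eqref{croissancet} is only a linear \emph{growth} bound $|U_\eps(t,y)|\leq C+Lt+k_0|y|$, not a Lipschitz estimate. Using it as you propose yields
\[
\frac{U_{\eps_k}(s_k,x_k+\eps_k z)-U_{\eps_k}(s_k,x_k)}{\eps_k}\leq \frac{2(C+Ls_k+k_0|x_k|)}{\eps_k}+k_0|z|,
\]
and the first term blows up as $\eps_k\to0$, so no integrable majorant of the form $M(z)e^{k_0|z|}$ is available. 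In fact the paper has no spatial Lipschitz bound on $U_\eps$; this is precisely why the Lipschitz continuity of $\underline U$ is established only \emph{a posteriori}, in Proposition~\ref{apriorigrad}, via viscosity arguments.

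The paper's proof avoids this difficulty altogether by working with a \emph{global} maximum of $\overline U-\varphi$ (and hence of $U_\eps-\varphi$). Then the comparison $U_\eps(t_\eps,y_\eps+\eps z)-U_\eps(t_\eps,y_\eps)\leq \varphi(t_\eps,y_\eps+\eps z)-\varphi(t_\eps,y_\eps)$ holds for \emph{every} $z\in\R^n$, and the monotonicity $\D_\eta\Lambda<0$ lets one replace $U_\eps$ by $\varphi$ inside the integral \emph{before} passing to the limit:
\[
\D_t\varphi(t_\eps,y_\eps)=-\Lambda\Big(y_\eps,\int M(z)e^{\frac{U_\eps(\cdot+\eps z)-U_\eps(\cdot)}{\eps}}dz\Big)\leq -\Lambda\Big(y_\eps,\int M(z)e^{\frac{\varphi(\cdot+\eps z)-\varphi(\cdot)}{\eps}}dz\Big).
\]
Now the limit $\eps\to0$ involves only the smooth test function $\varphi$, and no regularity of $U_\eps$ in $y$ is needed. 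Your local-maximum/tail-splitting strategy can be repaired along the same lines: reduce to a global strict maximum (standard in viscosity theory), then use monotonicity to swap $U_\eps$ for $\varphi$ in the nonlocal term, rather than trying to dominate the $U_\eps$-quotient directly.
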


\begin{proof}[Proof of Proposition \ref{subsuper}.]
 In order to prove that $\overline{U}$ is a viscosity subsolution of~\eqref{HJU}, since $\overline{U}$ is upper semi-continuous, let us consider a test function $\varphi$ and a point $(t_0,y_0)$ such that $
	\overline{U}-\varphi$ reaches a global maximum at $(t_0,y_0)$. From classical results, there exists $(t_\eps,y_\eps)$ such that 
	\begin{equation*}
		\left\{\begin{aligned}
			&(t_\eps,y_\eps)\underset{\eps \to 0}{\longrightarrow}(t_0,y_0),\\
			&\max_{t,y} U_\eps-\varphi =(U_\eps-\varphi)(t_\eps,y_\eps).
		\end{aligned}\right.
	\end{equation*}
Besides, note that for all $z\in\R^n$, ${\varphi(t_\eps,y_\eps+\eps z)-U_\eps(t_\eps,y_\eps+\eps z)\geq \varphi(t_\eps,y_\eps)-U_\eps(t_\eps,y_\eps)}$, thus we have
				\begin{equation*}
					\frac{\varphi(t_\eps,y_\eps+\eps z)-\varphi(t_\eps,y_\eps)}{\eps}\geq\frac{U_\eps(t_\eps,y_\eps+\eps z)- U_\eps(t_\eps,y_\eps)}{\eps}.
				\end{equation*}
Since $\D_\eta \Lambda<0$ from \eqref{detlambda}, equation \eqref{HJUe} gives
\begin{multline*}
\D_t\varphi(t_\eps,y_\eps)=-\Lambda\left(y_\eps,\int_{\R^n} 
M(z)e^{\frac{U_\eps(t_\eps,y_\eps+\eps z)-U_\eps(t_\eps,y_\eps)}{\eps}}d 
z\right)\\
\leq -\Lambda\left(y_\eps,\int_{\R^n} 
M(z)e^{\frac{\varphi(t_\eps,y_\eps+\eps z)-\varphi(t_\eps,y_\eps)}{\eps}}d 
z\right).
\end{multline*}
As $\eps$ goes to $0$,
	\begin{equation*}
		\D_t \varphi(t_0,y_0)\leq-\Lambda\left(y_0,\int_{\R^n} M(z)e^{\nabla_y \varphi(t_0,y_0)\cdot z}\right)=H(y_0,\nabla_y \varphi(t_0,y_0)),
	\end{equation*}
then $\overline{U}$ is a viscosity subsolution of \eqref{HJU}. 
With the same method, we prove that ${\underline {U}}$ is a viscosity 
supersolution. It completes the first part of the proof. The second part of the 
statement is a well-known result, and a proof can be found in \cite{GB:94}.
\end{proof}

\subsubsection{A posteriori Lipschitz estimate on $\underline{U}$}
The announced Lipschitz continuity of $\underline{U}$ is stated in the following result.

\begin{prop}\label{apriorigrad}
Assume \eqref{H1}--\eqref{H2}. Then the lower semi-continuous function $\underline{U}$ defined in~\eqref{defsupersub} is a $L$-Lipschitz function with  $L>0$ defined below. 
\end{prop}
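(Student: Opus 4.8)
The plan is to show that $\underline{U}$ inherits a spatial Lipschitz bound from the estimates already available: the uniform bound $\vert\D_t U_\eps\vert\le L$ from Theorem~\ref{ThmEpsilon}, the coercivity assumption~\eqref{H2}, and the fact that $\underline{U}$ is a viscosity supersolution of~\eqref{HJU} (Proposition~\ref{subsuper}). The key observation is that~\eqref{H2} forces the argument $\eta_\eps(t,y)=\int_{\R^n}M(z)e^{\frac{U_\eps(t,y+\eps z)-U_\eps(t,y)}{\eps}}dz$ to stay in a fixed compact subinterval $[\eta_0,\eta_1]\subset(0,+\infty)$, uniformly in $\eps,t,y$. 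Indeed, from~\eqref{HJUe} and $\vert\D_t U_\eps\vert\le L$ we get $\vert\Lambda(y,\eta_\eps(t,y))\vert\le L$; combined with~\eqref{H2}, which says $\vert\Lambda(y,\eta)\vert\to\infty$ as $\eta\to0$ or $\eta\to\infty$ uniformly in $y$, this confines $\eta_\eps(t,y)$ to a compact $[\eta_0,\eta_1]$ with $\eta_0>0$.

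Next I would translate the bound on $\eta_\eps$ into a bound on the incremental quotients of $U_\eps$. Since $M\ge0$ with $\int M=1$ (a probability kernel) and, say, $M$ has mass bounded below on a neighbourhood of $0$, the inequality $\int_{\R^n}M(z)e^{\frac{U_\eps(t,y+\eps z)-U_\eps(t,y)}{\eps}}dz\le \eta_1$ controls how much $U_\eps$ can increase in the directions where $M$ charges mass: roughly, for $z$ in the support of $M$, $\frac{U_\eps(t,y+\eps z)-U_\eps(t,y)}{\eps}\le \log\eta_1 + C$. Iterating this along a chain of points (covering $\R^n$ by small steps in directions where $M$ is supported — here one uses that $M$ is supported on a full-dimensional set, or at least on a spanning set of directions) yields a one-sided Lipschitz bound $U_\eps(t,y')-U_\eps(t,y)\le C'\vert y'-y\vert$ with $C'$ uniform in $\eps,t$. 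Exchanging the roles of $y$ and $y'$ — i.e. using the lower bound $\eta_\eps(t,y)\ge\eta_0$ to control decrease — gives the matching bound, so $y\mapsto U_\eps(t,y)$ is $L$-Lipschitz uniformly in $\eps$ and $t$ (after possibly enlarging $L$). Passing to the $\liminf$ in~\eqref{defsupersub} preserves the Lipschitz bound, giving the claim for $\underline{U}$.

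The main obstacle I expect is making the chaining argument rigorous while keeping constants uniform in $\eps$: the estimate $\frac{U_\eps(t,y+\eps z)-U_\eps(t,y)}{\eps}\le\log\eta_1+C$ only directly controls increments over the $\eps$-scale step $\eps z$, so to reach a macroscopic displacement $y'-y$ one concatenates $O(\vert y'-y\vert/\eps)$ such steps, and one must check the accumulated additive constants telescope correctly (they do, because each step contributes $\le(\log\eta_1+C)\vert z\vert$ per unit length travelled, not a fixed additive constant). A subtlety is that the support and lower bound on $M$ near $0$ must be used to guarantee one can always step in any prescribed direction; if $M$ is only assumed to be a probability density with finite exponential moments (as implicitly needed for $\eta(p)$ to be finite), one may instead argue directly at the level of the viscosity supersolution $\underline{U}$: at a point of differentiability, $\D_t\underline{U}=-\Lambda(y,\eta(\nabla_y\underline{U}))$ with $\vert\D_t\underline{U}\vert\le L$, hence $\eta(\nabla_y\underline{U})\in[\eta_0,\eta_1]$ by~\eqref{H2}, and since $p\mapsto\eta(p)=\int M(z)e^{p\cdot z}dz$ is convex, coercive and blows up as $\vert p\vert\to\infty$ (by finiteness of exponential moments), the sublevel set $\{p:\eta(p)\le\eta_1\}$ is a fixed compact, giving $\vert\nabla_y\underline{U}\vert\le L$ a.e., which for a supersolution upgrades to the global Lipschitz bound.
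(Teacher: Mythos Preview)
Your first step is correct and matches Lemma~\ref{LemmePreliminaire1}: the bound $\vert\D_t U_\eps\vert\le L$ together with the coercivity assumption~\eqref{H2} confines $\eta_\eps(t,y)$ to a fixed compact interval. The chaining argument, however, has a real gap. The integral inequality $\int M(z)e^{q(z)}\,dz\le\eta_1$ with $q(z)=\frac{U_\eps(t,y+\eps z)-U_\eps(t,y)}{\eps}$ gives no pointwise control on $q$: it could be large on a set of small measure and very negative elsewhere. To extract $q(z)\le C$ for enough $z$ to chain, you would need an a priori continuity modulus for $U_\eps(t,\cdot)$, which is exactly what you are trying to prove. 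The accumulation over $O(\vert y'-y\vert/\eps)$ steps is therefore uncontrolled.

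Your fallback viscosity argument is the right idea and is essentially the paper's route, but as written it is circular: you invoke ``points of differentiability'' of $\underline U$ before knowing it is locally Lipschitz (Rademacher does not apply to a merely lower semicontinuous function), and at such a point a supersolution only gives $\D_t\underline U\ge-\Lambda(y,\eta(\nabla_y\underline U))$, not equality. The paper avoids both issues by working entirely with test functions. Lemma~\ref{lemmegrad} shows that for any smooth $\psi$ with $\underline U-\psi$ minimal at $(t_0,y_0)$ one has $\vert\nabla\psi(t_0,y_0)\vert\le L$: returning to approximating minima $(t_\eps,y_\eps)$ of $U_\eps-\psi$, one uses $\D_t\psi(t_\eps,y_\eps)=\D_tU_\eps(t_\eps,y_\eps)$, $\D_\eta\Lambda<0$ and the bound $\eta_\eps\le\overline\eta$ to obtain $\int M(z)e^{\nabla_y\psi(t_0,y_0)\cdot z}dz\le\overline\eta$, and positivity of $M$ then forces $\vert\nabla_y\psi(t_0,y_0)\vert$ bounded. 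The passage from this viscosity gradient bound to genuine Lipschitz continuity---your ``upgrades to the global Lipschitz bound''---is carried out by a cone test function: if $\underline U(t_0',y_0')-\underline U(t_0,y_0)>K(\vert t_0-t_0'\vert+\vert y_0-y_0'\vert)$ for some $K>L$, then $\psi(t,y)=\underline U(t_0',y_0')-K(\vert t-t_0'\vert+\vert y-y_0'\vert)$ touches $\underline U$ from below at some $(\bar t,\bar y)$ with $\bar t>0$ and $(\bar t,\bar y)\ne(t_0',y_0')$, where $\vert\nabla\psi\vert=K>L$ contradicts Lemma~\ref{lemmegrad}. This last step is standard but not optional, and is what your sketch is missing.
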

\noindent We first prove these two preliminary lemmas. We point out that \eqref{H2} plays a crucial role in the proof.
\begin{Lemme}\label{LemmePreliminaire1}
Assume \eqref{H1}--\eqref{H2}. Then there exist some positive constants $\underline{\eta},\bar \eta, L_1$ such that, uniformly in $\eps$, $\forall (t,y)\in (0,+\infty)\times\R^n,$
\begin{gather}
\underline{\eta}\leq \eta_\eps(t,y)\leq\bar \eta,\label{BornesEta}\\
\left\vert\D_\eta\Lambda\left(y,\eta_\eps(t,y)\right)\right\vert\leq L_1.\label{LambdaEta}
\end{gather}
\end{Lemme}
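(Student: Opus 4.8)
The plan is to read the two-sided bound on $\eta_\eps$ straight off the equation \eqref{HJUe}, using the uniform estimate on the time derivative already established in Theorem~\ref{ThmEpsilon}, and then to convert it into the bound on $\D_\eta\Lambda$ via the local assumption \eqref{H1}. No delicate PDE analysis is needed; everything follows from \eqref{HJUe}, \eqref{H1} and the coercivity \eqref{H2}.

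First I would note that, since $U_\eps$ solves \eqref{HJUe}, one has the pointwise identity $\Lambda\big(y,\eta_\eps(t,y)\big)=-\D_t U_\eps(t,y)$ for every $t>0$ and $y\in\R^n$. In particular $\eta_\eps(t,y)$ is finite (it is the argument of $\Lambda$ along an actual solution produced in Theorem~\ref{ThmEpsilon}) and strictly positive, because $M$ is a non-negative probability kernel and the exponential weight $e^{(U_\eps(t,y+\eps z)-U_\eps(t,y))/\eps}$ is strictly positive. Combining the identity above with the estimate $\vert\D_t U_\eps\vert\leq L$ from Theorem~\ref{ThmEpsilon}, which is uniform in $\eps>0$, $t>0$, $y\in\R^n$, yields
\[
\big\vert\Lambda\big(y,\eta_\eps(t,y)\big)\big\vert\leq L,\qquad \forall \eps>0,\ \forall t>0,\ \forall y\in\R^n.
\]

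Next I would invoke \eqref{H2}: since $\vert\Lambda(y,\eta)\vert\to+\infty$ as $\eta\to 0^+$ or $\eta\to+\infty$, uniformly in $y\in\R^n$, there exist constants $0<\underline{\eta}<\bar\eta$, depending only on $L$, such that $\vert\Lambda(y,\eta)\vert>L$ for every $y\in\R^n$ whenever $\eta\in(0,\underline{\eta})\cup(\bar\eta,+\infty)$. Comparing this with the previous display forces $\eta_\eps(t,y)\in[\underline{\eta},\bar\eta]$ for all $\eps>0$, $t>0$, $y\in\R^n$, which is exactly \eqref{BornesEta}. Then \eqref{LambdaEta} is obtained by applying the bound \eqref{H1} on the fixed compact interval $I=[\underline{\eta},\bar\eta]$: this gives a constant $L_1>0$ (depending only on $\underline{\eta},\bar\eta$) with $\vert\D_\eta\Lambda(y,\eta)\vert\leq L_1$ for all $y\in\R^n$ and $\eta\in[\underline{\eta},\bar\eta]$, and evaluating at $\eta=\eta_\eps(t,y)$ concludes, uniformly in $\eps$.

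The only point requiring care is the uniformity in $y$ of the thresholds $\underline{\eta}$ and $\bar\eta$: if the coercivity in \eqref{H2} were not uniform in $y$, the confinement interval for $\eta_\eps$ could degenerate as $\vert y\vert\to\infty$ and the lemma would fail. This is precisely why \eqref{H2} is stated with the words ``uniformly in $y\in\R^n$'', and it is the hypothesis doing all the work here; the rest is bookkeeping built on Theorem~\ref{ThmEpsilon} and \eqref{H1}.
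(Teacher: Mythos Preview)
Your proof is correct and follows essentially the same route as the paper: use the identity $\D_t U_\eps=-\Lambda(y,\eta_\eps)$ together with the uniform bound from Theorem~\ref{ThmEpsilon} to confine $\Lambda$, then invoke the uniform coercivity \eqref{H2} to trap $\eta_\eps$ in a fixed compact interval, and finally apply \eqref{H1} on that interval. Your added remark on why the uniformity in $y$ in \eqref{H2} is essential is a welcome clarification but does not depart from the paper's argument.
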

\begin{proof}
From Theorem~\ref{ThmEpsilon}, we know $\D_tU_\eps(t,y) = - \Lambda(y, \eta_\eps(t,y))$ is bounded for $(t,y)\in(0,+\infty)\times\R^n$, uniformly in $\eps>0$. From~\eqref{H2}, we deduce that $\eta_\eps(t,y)$ is bounded, which proves~\eqref{BornesEta}. Then we derive~\eqref{LambdaEta} directly from assumption~\eqref{H1}.
\end{proof}

\noindent In what follows, we use the notation $\nabla U=(\D_t U,\nabla_y U)$. 

\begin{Lemme}\label{lemmegrad} In the viscosity sense, $\nabla \underline{U}$ is bounded, that is, there exists a constant $L\geq k_0$ such that if $\psi$ is a smooth function and ${\underline{U}-\psi}$ reaches its minimum at $(t_0,y_0)\in(0,+\infty)\times\R^n$, then
\begin{equation*}
	\vert \D_t \psi(t_0,y_0)\vert \leq L, 
\end{equation*}
\begin{equation*}
	\|\nabla_y \psi(t_0,y_0)\|_{\infty} \leq L. 
\end{equation*}
\end{Lemme}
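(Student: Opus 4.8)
The plan is to prove the two bounds separately, and to obtain the time-derivative bound essentially for free from the a priori estimate already established, while the spatial-gradient bound requires a genuine argument exploiting the structure of the Hamiltonian together with \eqref{H2}. For the time derivative: since $\underline{U}$ is a viscosity supersolution of \eqref{HJU} by Proposition~\ref{subsuper}, if $\underline{U}-\psi$ attains a minimum at $(t_0,y_0)$ then $\D_t\psi(t_0,y_0)\geq H(y_0,\nabla_y\psi(t_0,y_0))=-\Lambda(y_0,\eta(\nabla_y\psi(t_0,y_0)))$; combined with the Lipschitz-in-time bound \eqref{LipschitzT} on $\underline{U}$ — which, since $u^0$ is $k_0$-Lipschitz and $\vert\underline{U}(t,y)-u^0(y)\vert\leq Lt$, forces any supporting test function to have $\vert\D_t\psi(t_0,y_0)\vert\leq L$ directly at points where $\underline{U}-\psi$ is minimal — we get the first inequality. (More carefully: the estimate $\vert\D_t U_\eps\vert\leq L$ of Theorem~\ref{ThmEpsilon} passes to the semi-relaxed limit, so at a minimum point of $\underline{U}-\psi$ one has $\vert\D_t\psi(t_0,y_0)\vert\leq L$.)

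For the spatial gradient, the key idea is the following. At the minimum point $(t_0,y_0)$ of $\underline{U}-\psi$, the supersolution inequality gives
\begin{equation*}
\D_t\psi(t_0,y_0)\geq-\Lambda\Big(y_0,\int_{\R^n}M(z)e^{\nabla_y\psi(t_0,y_0)\cdot z}\,dz\Big).
\end{equation*}
We already control the left-hand side: $\vert\D_t\psi(t_0,y_0)\vert\leq L$. Hence $\Lambda\big(y_0,\eta(\nabla_y\psi(t_0,y_0))\big)\geq -L$, so $\big\vert\Lambda\big(y_0,\eta(\nabla_y\psi(t_0,y_0))\big)\big\vert$ cannot be too large unless $\Lambda$ is large and negative — but we can also bound it from above. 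Indeed, the subsolution inequality for $\overline{U}$ (or a direct a priori bound) together with assumption \eqref{H1} applied once we know $\eta(\nabla_y\psi(t_0,y_0))$ lies in a compact set of $(0,\infty)$ yields a two-sided bound $\vert\Lambda(y_0,\eta(\nabla_y\psi(t_0,y_0)))\vert\leq L_0$. Now invoke \eqref{H2}: since $\vert\Lambda(y,\eta)\vert\to+\infty$ uniformly in $y$ as $\eta\to 0$ or $\eta\to+\infty$, a uniform bound on $\vert\Lambda(y_0,\eta)\vert$ forces $\eta(\nabla_y\psi(t_0,y_0))\in[\underline\eta,\bar\eta]$ for constants $\underline\eta,\bar\eta>0$ independent of $\eps$ and of the test function — exactly as in Lemma~\ref{LemmePreliminaire1}, which already records \eqref{BornesEta} for the approximating family and whose proof transfers verbatim to the relaxed limit. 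Finally, $\eta(p)=\int M(z)e^{p\cdot z}\,dz$ is a proper convex function of $p$ that tends to $+\infty$ as $\vert p\vert\to\infty$ (using that $M$ is a genuine probability kernel not supported on a hyperplane), so the sublevel set $\{p:\eta(p)\leq\bar\eta\}$ is compact; this gives $\|\nabla_y\psi(t_0,y_0)\|_\infty\leq L$ for some constant $L$, which we may take $\geq k_0$ to be consistent with the initial datum.

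The main obstacle, and the place where care is needed, is the passage to the semi-relaxed limit in the chain "bound on $\D_t\psi$ at the contact point $\Rightarrow$ bound on $\Lambda$ $\Rightarrow$ bound on $\eta$ $\Rightarrow$ bound on $\nabla_y\psi$": each implication is clean at the level of the $\eps$-problem (where $U_\eps$ is Lipschitz and $\D_tU_\eps=-\Lambda(y,\eta_\eps(t,y))$ holds pointwise), but for $\underline{U}$ one only has the viscosity supersolution inequality, so one must be sure that the one-sided inequality suffices — it does, because $\Lambda$ is \emph{decreasing} in $\eta$ (by \eqref{detlambda}), so an upper bound on $\eta$ comes from a lower bound on $\Lambda$, which is exactly the direction the supersolution inequality provides, while the complementary lower bound on $\eta$ (upper bound on $\Lambda$) is the one that needs the global a priori estimate $\D_t U_\eps\geq\inf_y\D_tu^0_\eps$ from Proposition~\ref{borneV}, passed to the limit. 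Once both one-sided bounds on $\eta(\nabla_y\psi(t_0,y_0))$ are in hand, the coercivity of $\eta(\cdot)$ closes the argument with no further subtlety.
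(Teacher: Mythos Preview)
Your plan is correct and follows essentially the same route as the paper. The paper carries out the argument by going back to the $\eps$-level: it takes approximating minimum points $(t_\eps,y_\eps)$ of $U_\eps-\psi$, uses $\D_t\psi(t_\eps,y_\eps)=\D_tU_\eps(t_\eps,y_\eps)$ together with Theorem~\ref{ThmEpsilon} for the time bound, and then combines the inequality $\eta^{(\psi)}_\eps\leq\eta_\eps(t_\eps,y_\eps)\leq\bar\eta$ (from $\D_\eta\Lambda<0$ and Lemma~\ref{LemmePreliminaire1}) with $M>0$ to bound $\nabla_y\psi$ after passing to the limit. You do the same thing one level up, invoking Proposition~\ref{subsuper} and~\eqref{H2} directly at $(t_0,y_0)$; since Proposition~\ref{subsuper} is itself proved via the approximating minima, this is only a repackaging.

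One small point: your discussion of needing a \emph{lower} bound on $\eta(\nabla_y\psi(t_0,y_0))$ is unnecessary. For the spatial gradient estimate only the upper bound $\eta(\nabla_y\psi)\leq\bar\eta$ is used, and it follows exactly as you say from the supersolution inequality, the time-derivative bound, and the monotonicity $\D_\eta\Lambda<0$; the coercivity of $p\mapsto\eta(p)$ (via $M>0$) then finishes. The paper does not use, and you do not need, the other direction.
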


\begin{proof} 
Let $\psi$ be a smooth function such that ${\underline{U}-\psi}$ reaches its 
minimum at $(t_0,y_0).$
Similarly to the proof of Proposition~\ref{subsuper}, up to extraction of a 
subsequence, there exists a sequence of minimum points $(t_\eps,y_\eps)$ of 
$U_\eps - \psi$ which converges to $(t_0,y_0)$. As $\underline{U}$ is a 
supersolution, we obtain
\begin{multline}\label{ineq_lip}
-\Lambda\left(y_\eps,\int_{\R^n} M(z)e^{\frac{\psi(t_\eps,y_\eps+\eps 
z)-\psi(t_\eps,y_\eps)}{\eps}}d z\right)\\ 
\leq \D_t\psi(t_\eps,y_\eps)=\D_tU_\eps(t_\eps,y_\eps)=-\Lambda\left(y_\eps,
\eta_\eps(t_\eps,y_\eps)\right) .		
\end{multline}
From the estimate on $\D_t U_\eps$ given by Theorem~\ref{ThmEpsilon}, we have, when $\eps$ goes to $0$,
\begin{equation}
\vert\D_t\psi(t_0,y_0) \vert\leq L.
\end{equation}
Thus, from $\D_\eta\Lambda<0$, \eqref{BornesEta} and \eqref{ineq_lip}, we derive, as $\eps$ goes to 0, 
\begin{equation*}	\int_{\R^n} M(z)e^{\nabla_y\psi(t_0,y_0)\cdot z}d z\leq \overline{\eta}.
\end{equation*}
Since $M(z)>0$, we deduce
\begin{equation}
\|\nabla_y\psi(t_0,y_0)\|_{\infty}\leq L',
\end{equation}
for some constant $L'$. Setting $L:=\max(L,L',k_0)$ achieves the proof.
\end{proof}

\begin{proof}[Proof of Proposition \ref{apriorigrad}.]We want to prove that
	\begin{equation*}
		\begin{aligned}
				&\forall (t,t')\in (0,\infty)^2,(y,y')\in(\R^n)^2, & \underline{U}(t',y')-\underline{U}(t,y)\leq L(\vert t-t' \vert +\vert y-y'\vert).
		\end{aligned}
	\end{equation*}
By contradiction, we assume that there exists $K>L$ such that, for some 
$(t_0,t_0')\in (0,\infty)^2$ and $(y_0,y_0')\in(\R^n)^2$,
\begin{equation}\label{absurd}
\underline{U}(t_0',y_0')-\underline{U}(t_0,y_0)- 
K(\vert t_0-t_0' \vert +\vert y_0-y_0'\vert)>0.
\end{equation}
Let us define the test function
$
		\psi (t,y) := \underline{U}(t_0',y_0')-K(\vert t-t_0' \vert +\vert y-y_0'\vert).
$
	As $k_0<K$, from~\eqref{croissancet} we derive that $\psi(t,y)-\underline{U}(t,y)\to-\infty$ when $\vert y\vert\to\infty$. Because this function is upper semicontinuous, it reaches its maximum at a point $(\bar t,\bar y)\in[0,\infty)\times\R^n$. In order to apply Lemma~\ref{lemmegrad} at $(\bar t,\bar x)$, we have to prove that $\bar t>0$ and that $\psi$ is smooth in a neighborhood of $\bar x$. 
We prove the first assertion by contradiction. We assume $\bar t=0$. From~\eqref{LipschitzT} and the Lipschitz continuity of $u^0$, we have
\begin{equation}
\underline{U}(t_0',y_0')-\underline{U}(\bar t, \bar y)
\leq \underline{U}(t_0',y_0')-u^0(y_0')+u^0(y_0')-u^0(\bar y)
\leq L (\vert t_0'-\bar t \vert+\vert \bar y - y_0'\vert),
\end{equation}
which contradicts \eqref{absurd}. Thus $\bar t>0$. Besides, using~\eqref{absurd} we deduce $\bar x\neq x_0$, therefore $\psi$ is smooth in a neighborhood of $\bar x$. Thus we can apply Lemma~\ref{lemmegrad} and obtain $\| \nabla \psi(\bar t, \bar y) \|_{\infty}=K\leq L,$ which is a contradiction.
\end{proof}

\subsection{Uniqueness result}

We prove Theorem \ref{uniqueness}. This implies that $U_\eps$ converges locally uniformly to a function $U$ solution of~\eqref{HJU} in the viscosity sense. Therefore, it completes the proof of Theorem~\ref{stability}.

 In fact, we prove that a Lipschitz continuous supersolution remains above a subsolution provided it is the case at initial time. Namely, we prove $\underline U\equiv\overline U$, with the notations introduced in~\eqref{defsupersub}. We point out that this uniqueness result is not standard since our assumption~\eqref{LemmeH} allows the Hamiltonian to have superlinear growth. The fact that $\underline{U}$ is Lipschitz continuous, as stated in Proposition~\ref{apriorigrad}, is used as a key ingredient.

\begin{proof}[Proof of Theorem \ref{uniqueness}.] From the definition of $\overline U$ and $\underline U$ given in \eqref{defsupersub}, we know that $\underline U\leq\overline U$. We prove the reverse inequality. We fix $T>0$. By contradiction, we assume  
\begin{equation}\label{contradiction}
	\sigma:=\sup_{\substack{y\in\R^n\\t\in[0,T]}}(\overline{U}(t,y)-\underline{U}(t,y))>0.
\end{equation}
From \eqref{croissancet} and \eqref{defsupersub}, there exists a constant $C>0$ such that
\begin{equation}\label{bornety}
	\begin{aligned}
		&\forall t>0, \forall y\in\R^n, &\vert \underline{U}(t,y)\vert + \vert \overline{U}(t,y)\vert \leq C+2k_0\vert y\vert,
	\end{aligned}
\end{equation}
The same estimate also holds for $\underline{U}$.
We use the classical method of doubling the variables in the framework of viscosity solutions (see \cite{Cr-Ev-Li,crandall1992}). Let us fix $\alpha>0$, $\delta\in[0,1]$  and set for all $t\in[0,T], t'\in[0,T], y\in\R^n, y'\in\R^n,$ we define

\begin{equation}
	V_\delta(t,y,t',y'):=\left[\overline{U}(t,y)-\alpha t-\delta\vert y\vert^2\right]-\left[\underline{U}(t',y')+\alpha t'+\delta\vert y'\vert^2\right]-\frac{\vert y-y'\vert^2}{\delta^2}-\frac{\vert t-t'\vert^2}{\delta^2}.
\end{equation}
Thanks to \eqref{bornety}, $V_\delta$ reaches its maximum $M_\delta$ at a point ${(t_\delta, y_\delta, t'_\delta, y'_\delta)}$. In what follows we use the following lemma.

\begin{Lemme}\label{LemmeDivers}
When $\delta$ vanishes, the estimates hold \begin{enumerate}
		\item $\vert t_\delta -t_\delta'\vert, \vert y_\delta -y_\delta'\vert=O(\delta^2)$,
		\item $ \vert y_\delta\vert, \vert y_\delta'\vert = O(\frac{1}{\sqrt{\delta}}),$
		\item $\displaystyle \liminf_{\delta \to 0}t_\delta, \liminf_{\delta \to 0}t_\delta'>0$.
	\end{enumerate}
\end{Lemme}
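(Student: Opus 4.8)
The plan is to establish the three estimates of Lemma~\ref{LemmeDivers} by exploiting the maximality of $M_\delta$ at $(t_\delta,y_\delta,t'_\delta,y'_\delta)$, compared against well-chosen competitors, together with the growth bound \eqref{bornety} and the a priori regularity of $\underline{U}$ given in Proposition~\ref{apriorigrad}.

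For item (1), the standard device is to compare $V_\delta(t_\delta,y_\delta,t'_\delta,y'_\delta)$ with the value on the diagonal: since $V_\delta(t_\delta,y_\delta,t_\delta,y_\delta)\leq M_\delta$ and $V_\delta(t'_\delta,y'_\delta,t'_\delta,y'_\delta)\leq M_\delta$, adding these and using that $\underline{U}$ is $L$-Lipschitz (Proposition~\ref{apriorigrad}) to control the differences $\underline{U}(t'_\delta,y'_\delta)-\underline{U}(t_\delta,y_\delta)$ and $\overline{U}(t_\delta,y_\delta)-\overline{U}(t'_\delta,y'_\delta)$ — the latter via the upper bound coming from subsolution-type estimates, or more simply by first proving (2) and using \eqref{bornety} — one arrives at
\[
\frac{\vert y_\delta-y'_\delta\vert^2}{\delta^2}+\frac{\vert t_\delta-t'_\delta\vert^2}{\delta^2}\leq C\bigl(\vert y_\delta-y'_\delta\vert+\vert t_\delta-t'_\delta\vert\bigr),
\]
which forces $\vert t_\delta-t'_\delta\vert,\vert y_\delta-y'_\delta\vert=O(\delta^2)$. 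For item (2), I would compare $M_\delta$ with $V_\delta$ evaluated at a fixed reference point (or at $y=y'=0$), so that $M_\delta$ is bounded below by a $\delta$-independent constant; on the other hand, using \eqref{bornety} to bound $\overline{U}$ and $\underline{U}$ from above linearly in $\vert y\vert$, the terms $-\delta\vert y_\delta\vert^2-\delta\vert y'_\delta\vert^2$ must absorb this linear growth, yielding $\delta\vert y_\delta\vert^2\leq C(1+\vert y_\delta\vert)$, hence $\vert y_\delta\vert=O(1/\sqrt\delta)$, and similarly for $y'_\delta$.

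For item (3), I would argue by contradiction: if $t_\delta\to0$ (along a subsequence), then using \eqref{LipschitzT} — which controls $\overline{U}(t,y)-u^0(y)$ and $\underline{U}(t,y)-u^0(y)$ by $Lt$ — together with item (1) giving $t'_\delta\to0$ as well, and the Lipschitz continuity of $u^0$ from \eqref{initialu}, one estimates $\overline{U}(t_\delta,y_\delta)-\underline{U}(t'_\delta,y'_\delta)$ by something tending to $0$ as $\delta\to0$. Combined with the fact that $M_\delta$ is bounded below by (a quantity approaching) $\sigma>0$ from \eqref{contradiction} — obtained by taking a near-optimal pair $(t,y)$ in \eqref{contradiction} and evaluating $V_\delta$ there with $t'=t$, $y'=y$ — this produces the contradiction $\sigma\leq0$. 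The same reasoning with the roles of $t_\delta,t'_\delta$ interchanged handles $\liminf t'_\delta>0$.

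The main obstacle will be item (1): to close the loop one needs an upper bound on $\overline{U}(t_\delta,y_\delta)-\overline{U}(t'_\delta,y'_\delta)$, but $\overline{U}$ is only known to be upper semi-continuous, not Lipschitz, so one cannot symmetrically invoke regularity. The resolution is to establish (2) first (which uses only the linear growth \eqref{bornety}), then bound the $\overline{U}$-difference crudely by $C+2k_0(\vert y_\delta\vert+\vert y'_\delta\vert)=O(1/\sqrt\delta)$; plugging this into the penalization inequality still gives $\vert y_\delta-y'_\delta\vert^2/\delta^2=O(1/\sqrt\delta)$, i.e. $\vert y_\delta-y'_\delta\vert=O(\delta^{3/4})=O(\delta^{2})$ is too strong this way — so in fact one must instead use the finer comparison $M_\delta\geq V_\delta(t_\delta,y_\delta,t_\delta,y_\delta)$ paired with the $L$-Lipschitz bound on $\underline{U}$ alone (which suffices, since only $\underline{U}$ appears when both spatial arguments are set equal to $y_\delta$), and symmetrically with $\overline{U}$ replaced using $M_\delta\geq V_\delta(t'_\delta,y'_\delta,t'_\delta,y'_\delta)$ where again only $\underline U$ — already Lipschitz — is evaluated at two points. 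Carefully bookkeeping which of $\overline U,\underline U$ is being compared at two nearby points is the crux, and it is exactly why Proposition~\ref{apriorigrad} was proved beforehand.
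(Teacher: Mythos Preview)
Your overall strategy matches the paper's, but there are two concrete errors that prevent the argument from closing.

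First, your argument for item~(2) is wrong as stated: from the linear growth bound~\eqref{bornety} and comparison with a fixed point you obtain $\delta\vert y_\delta\vert^2\leq C(1+\vert y_\delta\vert)$, which yields only $\vert y_\delta\vert=O(1/\delta)$, \emph{not} $O(1/\sqrt{\delta})$. This weaker $O(1/\delta)$ bound is precisely what the paper proves as a preliminary step; it is then used to control the cross term $\delta(\vert y_\delta\vert^2-\vert y'_\delta\vert^2)\leq \delta(\vert y_\delta\vert+\vert y'_\delta\vert)\,\vert y_\delta-y'_\delta\vert=O(1)\,\vert y_\delta-y'_\delta\vert$ that arises in the diagonal comparison for item~(1) (a term you do not mention but which must be absorbed into the right-hand side). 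Only \emph{after} item~(1) is established does the paper upgrade to $O(1/\sqrt{\delta})$: one compares $M_\delta\geq V_\delta(0,0,0,0)$ and bounds $\overline U(t_\delta,y_\delta)-\underline U(t'_\delta,y'_\delta)$ by decomposing through $u^0(y_\delta)$ and $\underline U(t_\delta,y_\delta)$, using~\eqref{LipschitzT} and the Lipschitz continuity of $\underline U$, so that $\delta(\vert y_\delta\vert^2+\vert y'_\delta\vert^2)\leq 2Lt_\delta+L(\vert t_\delta-t'_\delta\vert+\vert y_\delta-y'_\delta\vert)=O(1)$. Your plan omits this refinement.

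Second, your final claim about the other diagonal is incorrect: in the comparison $M_\delta\geq V_\delta(t'_\delta,y'_\delta,t'_\delta,y'_\delta)$ it is the $\underline U(t'_\delta,y'_\delta)$ terms that cancel, and one is left with $\overline U(t_\delta,y_\delta)-\overline U(t'_\delta,y'_\delta)$, which you cannot control since $\overline U$ is merely upper semicontinuous. Fortunately this second comparison is unnecessary: the single inequality $M_\delta\geq V_\delta(t_\delta,y_\delta,t_\delta,y_\delta)$ already produces only $\underline U$-differences, and together with the preliminary $O(1/\delta)$ bound on $\vert y_\delta\vert,\vert y'_\delta\vert$ it suffices to reach your displayed inequality and hence item~(1). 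Your argument for item~(3) is essentially the paper's.
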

The proof of Lemma \ref{LemmeDivers} is essentially technical. Note that the Lipschitz continuity of $\underline{U}$ is a key ingredient, since usual estimates cannot give any better result than $\vert y_\delta-y_\delta'\vert=O(\delta)$.

\begin{proof}[Proof of Lemma \ref{LemmeDivers}.]
First, we prove that $\vert y_\delta\vert, \vert y_\delta'\vert=O(\frac{1}{\delta}).$ 
For simplicity, all constants that do not depend on $\delta$ are denoted by $K$. 
We have
\begin{equation*}
	\begin{aligned}
			\forall(t,y,t',y')\in\left([0,T]\times\R^n\right)^2,		V_\delta(t,y,t',y')&\leq K+k_0(|y|+|y'|)-\delta (|y|^2+|y'|^2)\\
			&\leq K+Kz-\delta z^2,
	\end{aligned}
\end{equation*}
where $z=\max(\vert y\vert,\vert y'\vert)$. This means that $V_\delta$ can be bounded from above by a second order polynomial function of $z$. Consequently, the points $(y_\delta,y_\delta')$ where $V_\delta$ reaches its maximum are bounded by $z_0$, maximum solution to the equation
\begin{equation*}
	V_\delta(0,0,0,0)-1= K+Kz-\delta z^2,
\end{equation*}
which writes under the form
\begin{equation*}
	z_0 = \frac{K+\sqrt{K+\delta K}}{\delta}=O(\frac{1}{\delta}).
\end{equation*}
Thus we infer 
\begin{equation}\label{boundy}
	\vert y_\delta\vert,\   \vert y_\delta'\vert =O(\frac{1}{\delta}).
\end{equation}

Now we prove the assertion 1 of Lemma \ref{LemmeDivers}.
As $M_\delta\geq V_\delta(t_\delta,y_\delta,t_\delta,y_\delta)$, we have
\begin{multline}\label{reference2}
\alpha (t_\delta'-t_\delta)+\delta(\vert y_\delta'\vert^2-\vert 
y_\delta\vert^2)+\frac{\vert y_\delta-y_\delta'\vert^2}{\delta^2}+\frac{\vert 
t_\delta-t_\delta'\vert^2}{\delta^2} \\
\leq \overline 
U(t_\delta,y_\delta)-\underline{U}(t_\delta',y_\delta')
-\overline U(t_\delta,y_\delta)+\underline{U}(t_\delta,y_\delta) \\
\leq \underline U(t_\delta,y_\delta)-\underline U(t_\delta',y_\delta')
\leq L\left(\vert t_\delta-t_\delta'\vert+\vert 
y_\delta-y_\delta'\vert\right),
\end{multline}
from the Lipschitz continuity of $\underline{U}$ stated in Proposition~\ref{apriorigrad}.
Besides, from \eqref{boundy} we obtain
	\begin{equation}\label{ineq_maxpoint}
	 \delta\left(\vert y_\delta\vert^2-\vert y_\delta'\vert^2\right)\leq \delta (|y_\delta| +|y_\delta'|)(|y_\delta| -|y_\delta'|)\leq K\vert y_\delta-y_\delta'\vert.
	\end{equation}
	Consequently, using \eqref{ineq_maxpoint} in \eqref{reference2}, we have
	\begin{equation}\label{Bootstrap}
		\frac{\vert y_\delta-y_\delta'\vert^2}{\delta^2}+\frac{\vert t_\delta-t_\delta'\vert^2}{\delta^2}\leq K\left(\vert t_\delta-t_\delta'\vert+\vert y_\delta-y_\delta'\vert\right).
	\end{equation}
Then, using the inequality
\begin{equation*}
(\vert y_\delta-y_\delta'\vert + \vert t_\delta-t_\delta'\vert)^2 \leq 2 (\vert y_\delta-y_\delta'\vert^2 + \vert t_\delta-t_\delta'\vert^2),
\end{equation*}
we deduce
\begin{equation*}
\frac{(\vert y_\delta-y_\delta'\vert+\vert t_\delta-t_\delta'\vert)^2}{\delta^2}\leq K\left(\vert t_\delta-t_\delta'\vert+\vert y_\delta-y_\delta'\vert\right).
\end{equation*}
Then, we obtain the estimates
\begin{equation}\label{estimates_delta}
\vert y_\delta-y_\delta'\vert,\vert t_\delta-t_\delta'\vert=O\left(\delta^{2}\right).
\end{equation}
Hence the assertion 1 of Lemma \ref{LemmeDivers}.
\\

Next, we prove the assertion 2. From $M_\delta\geq V_\delta(0,0,0,0)$, \eqref{LipschitzT} and Proposition~\ref{apriorigrad} we infer
\begin{equation}\label{DerniereEstimee}
\begin{aligned}
\alpha (t_\delta+t_\delta')+\delta\left(\vert y_\delta\vert^2+\vert y_\delta'\vert^2\right)
&\leq& &\overline{U}(t_\delta,y_\delta)-\underline{U}(t_\delta',y_\delta')\\
		 &=& &\left[\overline U(t_\delta,y_\delta)-u^0(y_\delta)\right]+\left[u^0(y_\delta)-\underline U(t_\delta,y_\delta)\right] \\
		 && &+\left[ \underline U(t_\delta,y_\delta)-\underline{U}(t_\delta',y_\delta')\right]\\
		 &\leq& &2Lt_\delta +L(\vert t_\delta-t_\delta'\vert+\vert y_\delta-y_\delta'\vert).
\end{aligned}
\end{equation}
We deduce $\delta(\vert y_\delta\vert^2+\vert y_\delta'\vert^2)=O(1)$, hence the assertion 2.
\\

Finally, we prove the last assertion. By contradiction, we assume, up to extraction of a subsequence, that $t_\delta'\to0$ as $\delta$ goes to $0$. From \eqref{estimates_delta}, we deduce that $t_\delta$ converges to 0  as $\delta$ vanishes and then, from \eqref{DerniereEstimee}, we obtain $\delta(\vert y_\delta\vert^2+\vert y_\delta'\vert^2)=o(1)$. We set $M:=\max_{\substack{(t,y)\in[0,T]\times\R^n}}V_\delta(t,y,t,y)$ and choose $\delta$ and $\alpha$ small enough to ensure $M\geq \frac{\sigma}{2}$. We write
	\begin{equation*}
		\begin{aligned}
			\frac{\sigma}{2}\leq M\leq M_\delta
			 &\leq\overline U (t_\delta,y_\delta)-\underline{U}(t_\delta',y_\delta')\\
			 &=[\overline U (t_\delta,y_\delta)-u^0(y_\delta)]+[u^0 (y_\delta)-u^0(y_\delta')]+[u^0 (y_\delta')-\underline{U}(t_\delta',y_\delta')]\\
			 &\leq L(t_\delta+t_\delta')+k_0\vert y_\delta-y_\delta '\vert,
		\end{aligned}
	\end{equation*}
where we used \eqref{LipschitzT} for the last inequality. As $\delta$ goes to $0$, we deduce from the previous inequality that $\sigma\leq0$, contradiction. Thus $t_\delta'>0$ uniformly in $\delta$ when $\delta$ goes to $0$. Moreover we have $t_\delta-t_\delta'=o(1)$, hence the result.
\end{proof}

\noindent Now we go back to the proof of Theorem \ref{uniqueness}. We use that $\overline{U}$ and $\underline{U}$ are subsolution and supersolution in the viscosity sense. We define the test function
\begin{equation*}
	\varphi_{\alpha,\delta}(t,y):=\alpha t+\delta \vert y\vert^2+\left[\underline{U}( t_\delta', y_\delta')+\alpha  t_\delta'+\delta\vert y'_\delta\vert^2\right]+\frac{\vert y- y_\delta'\vert^2}{\delta^2}+\frac{\vert t- t_\delta'\vert^2}{\delta^2},
\end{equation*}
which is smooth and is such that $\overline{U}-\varphi_{\alpha,\delta}$ reaches its global maximum at the point $(t_\delta, y_\delta)$. Since $\overline{U}$ is a subsolution of~\eqref{HJU} ands $t_\delta\in(0,T]$,  the viscosity inequality holds
\begin{equation*}
	\D_t\varphi_{\alpha, \delta}(t_\delta, y_\delta)=\alpha+\frac{2}{\delta^2}(  t_\delta- t_\delta')\leq H\left( y_\delta, 2\delta y_\delta+\frac{2}{\delta^2}(  y_\delta- y_\delta')\right).
\end{equation*}
In the same way, since $\underline{U}$ is a supersolution, we derive
\begin{equation*}
	-\alpha+\frac{2}{\delta^2}  (t_\delta- t_\delta')\geq H\left( y_\delta', -2\delta y_\delta'+\frac{2}{\delta^2}( y_\delta- y_\delta')\right).
\end{equation*}
Subtracting this last inequality from the previous one and using Lemma~\ref{LemmeDivers}, we obtain
\begin{equation*}
	\begin{aligned}
			2\alpha&\leq H\left( y_\delta, 2\delta y_\delta+\frac{2}{\delta^2}  (y_\delta- y_\delta')\right)-H\left( y_\delta', -2\delta y_\delta'+\frac{2}{\delta^2}(  y_\delta- y_\delta')\right)\\
			&\leq \left[H\left( y_\delta, 2\delta y_\delta+\frac{2}{\delta^2}(  y_\delta- y_\delta')\right)-H\left( y_\delta, -2\delta y_\delta'+\frac{2}{\delta^2}(  y_\delta- y_\delta')\right)\right]\\
				 &\quad+\left[H\left( y_\delta, -2\delta y_\delta'+\frac{2}{\delta^2}(  y_\delta- y_\delta')\right)-H\left( y_\delta', -2\delta y_\delta'+\frac{2}{\delta^2}(  y_\delta- y_\delta')\right)\right]\\
				 &\leq 2\delta C(1+\vert y_\delta\vert^{\gamma_2})\vert y_\delta+ y_\delta'\vert
				 +C(1+\vert y_\delta\vert^{\gamma_1}+\vert y_\delta'\vert^{\gamma_1})\vert y_\delta-y_\delta'\vert\\
				 &=O(\delta^{\frac{1}{2}-\frac{\gamma_2}{2}})+O(\delta^{2-\frac{\gamma_1}{2}}),
		\end{aligned}
\end{equation*}
From assumption \eqref{LemmeH} we have ${2\alpha=o(1)}$ as $\delta$ goes to $0$, thus we find $\alpha\leq0$, which is a contradiction. Therefore $\sigma=0$ and we have
$
	\overline{U}= \underline{U}.
$ The proof of Theorem~\ref{uniqueness} is thereby complete.
\end{proof}

\section{Numerical simulations}
\label{numerics}

In order to complete the theory, we present numerical results in the case 
without mutations studied in Section~\ref{secCaseWM}. We perform a simulation 
of equation \eqref{systemesansmutation} with $\eps=5 \cdot 10^{-3}$. The 
numerical results allow to visualize $u_\eps$ and then the concentration 
dynamics of the population density. We choose the variable pair $(x,y)$ to be in 
the set $[0,1] \times [0,4]$ which we discretize with the steps $\Delta 
x=\frac{1}{M}$ and $\Delta y=\frac{1}{N}$ with $M=90, N=40$. The time step 
$\Delta t$ is chosen to be $5 \cdot 10^{-5}$ according to the CFL condition. We 
denote by $m^k_{i,j}$ the numerical solution at grid point $x_i=i \Delta x$, 
$y_j=j \Delta y$ and time $t_k=k \Delta t$. Equation \eqref{systemesansmutation} 
is solved by an implicit-explicit finite-difference method with the following 
scheme: for $i=1, \dots, N$ and $j=1, \dots, M$,
\begin{equation}\label{scheme}
m^{k+1}_{i,j}=m^k_{i,j} -\frac{\Delta t}{\eps} \frac{A(x_i,y_j)m^k_{i,j}-A(x_{i-1},y_j)m^k_{i-1,j}}{\Delta x}-\frac{\Delta t}{\eps}\left( \rho^k m^k_{i,j} -d(x_i,y_j) m^{k+1}_{i,j} \right),
\end{equation}
and the boundary term is discretized as
\begin{equation*}
A(0,y_j)m^{k+1}_{0,j}= \sum_{i=1}^M b(x_i,y_j) m^k_{i,j},
\end{equation*}
which is necessary for computing when $i=0$ in \eqref{scheme}.

The numerics is performed using Matlab with parameters as follows. We choose the initial number of individuals to be $1000$ and the final time $T=1.5$. We choose the following functions $A, b$ and $d$ as follows

\begin{equation*}
A(x,y)=1, \qquad b(x,y)=10 \cdot \frac{y}{1+x^2}, \qquad d(x,y)= y^3 \cdot (2+x/3),
\end{equation*}
and the initial data
\begin{equation*}
m^0(x,y)=p^0(x,y) e^{\frac{u^0(y)}{\eps}}, 
\end{equation*}
with
\begin{equation*}
p^0(x,y)= \exp{(-0.8 x)}, \qquad u^0(y)= -\frac{(y-0.5)^2}{2}.
\end{equation*}

\begin{figure}[!h]
\centering
\includegraphics[width=0.45\textwidth]{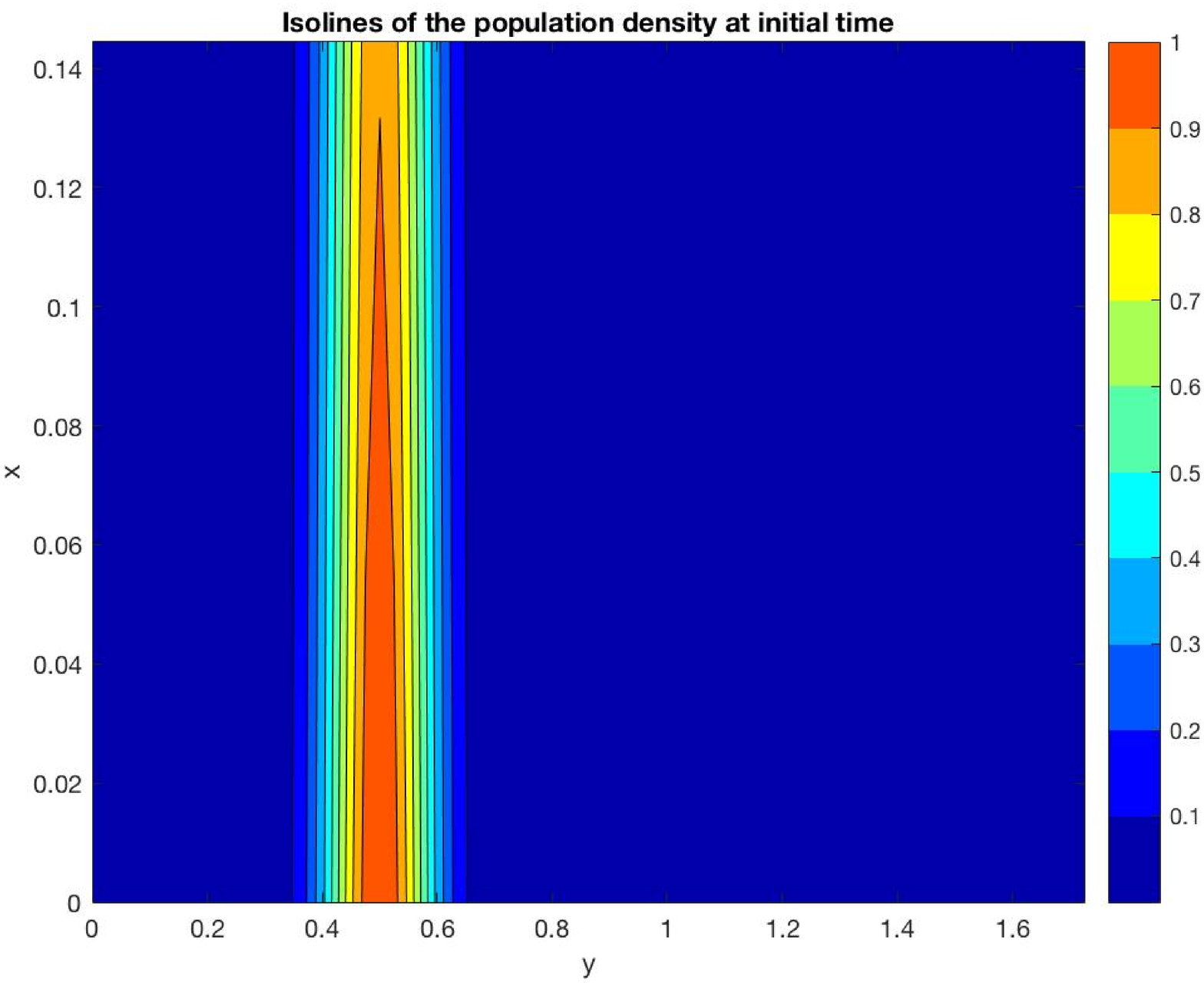}
\includegraphics[width=0.45\textwidth]{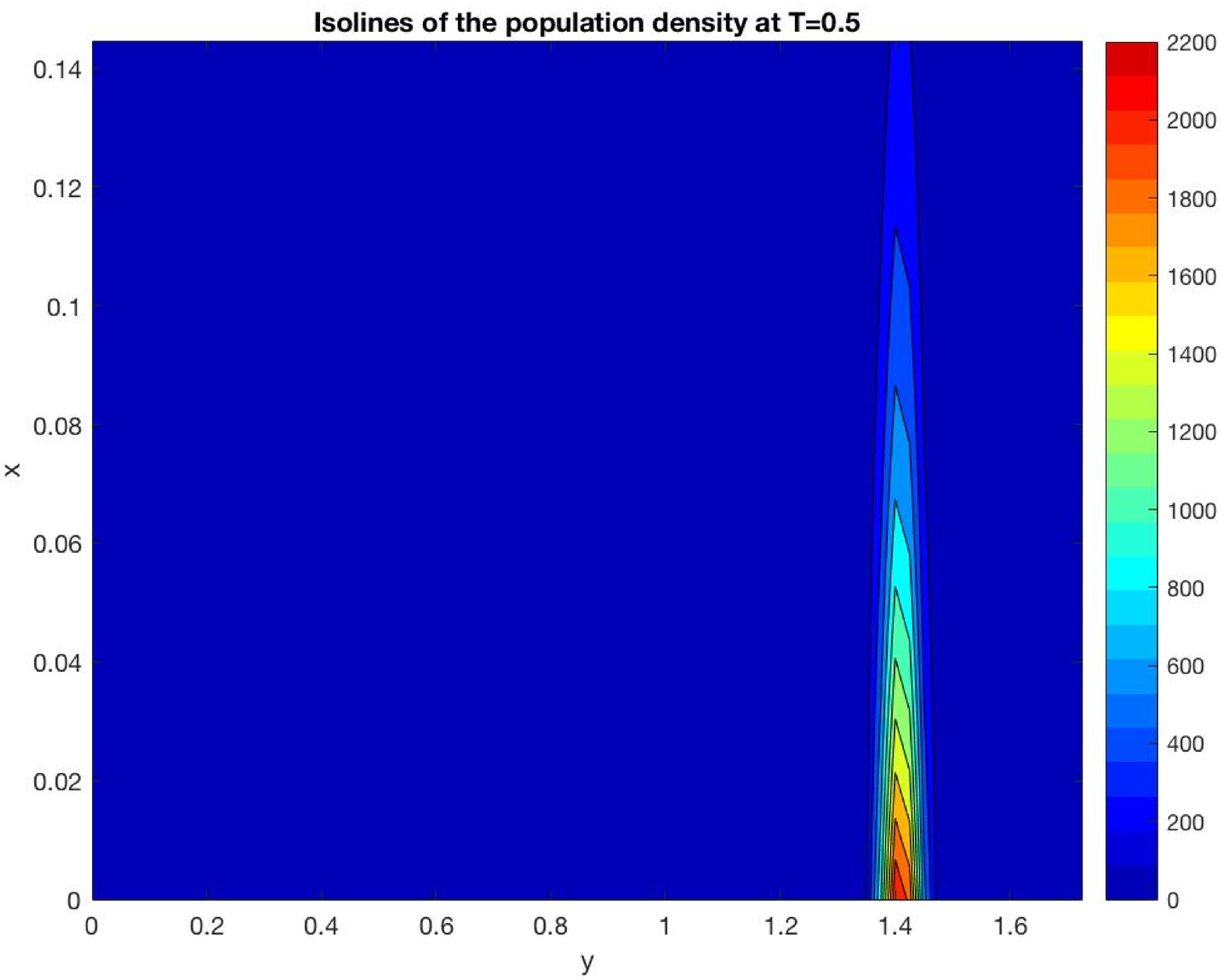}
\vspace{-5mm}
\caption{
Isolines in $(x,y)$ of the population distribution
}
\label{fi.isovaleurs}
\end{figure}

We choose to create a trade-off between the birth and death rates with regards to the $y$ variable, by assuming that $y \mapsto b(x,y)$ and $y \mapsto d(x,y)$ are increasing, which means that a greater natality also induces a greater mortality. This assumption allows to determine an Evolutionary Stable Distribution or ESD from the language of adaptive dynamics, which gives the repartition of the fittest traits (see \cite{WC:PEJ:HL, OD, PEJ:GR}).
 We do not know this ESD from the beginning, however it is important to select, according to assumptions~\eqref{hypothesed}-\eqref{hypM2}, a death rate with a stronger increase for large $x$ than the growth rate with regards to the trait variable in order to avoid that the dominant traits go to infinity.

\begin{figure}[!h]
\centering
\includegraphics[width=0.45\textwidth]{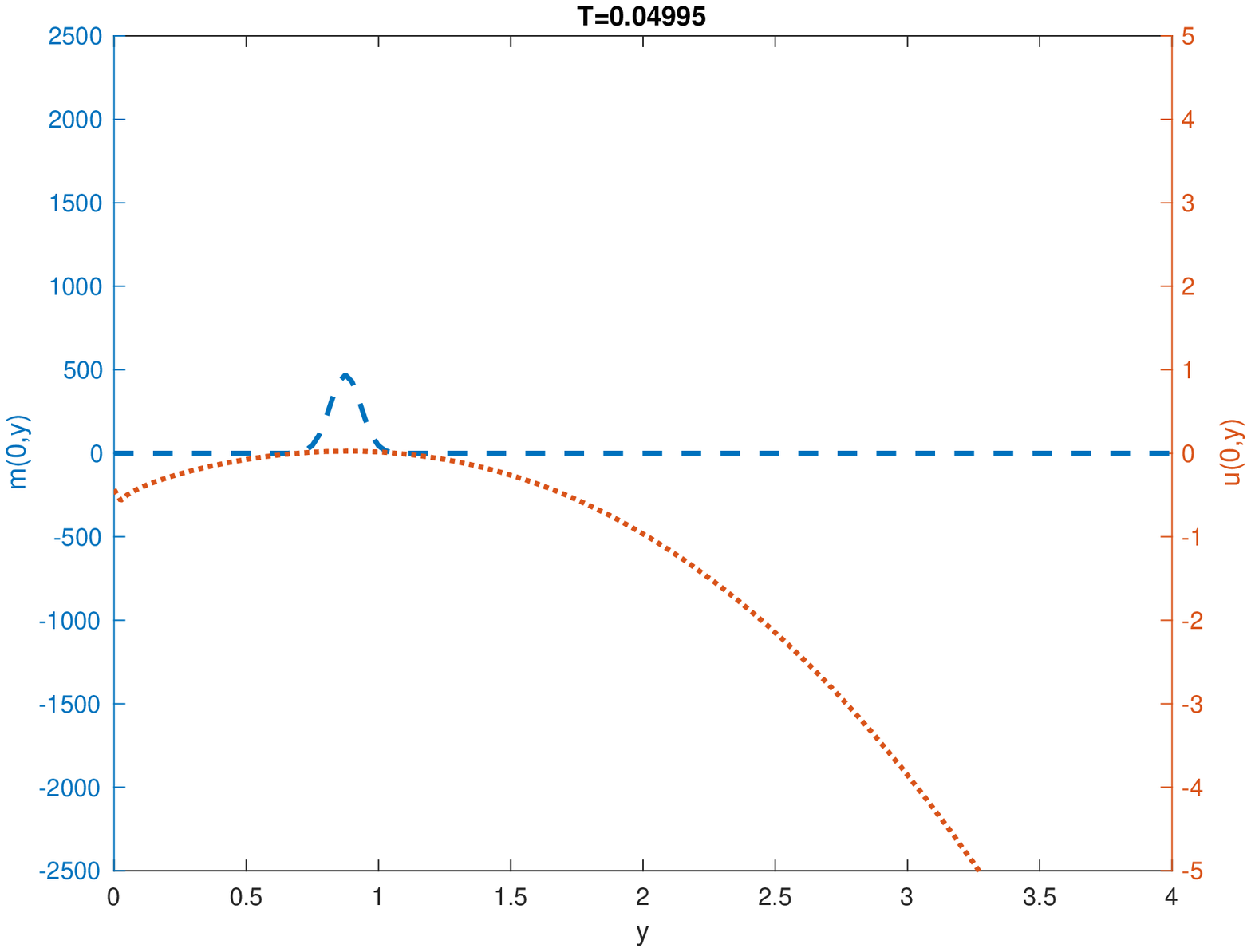}
\includegraphics[width=0.45\textwidth]{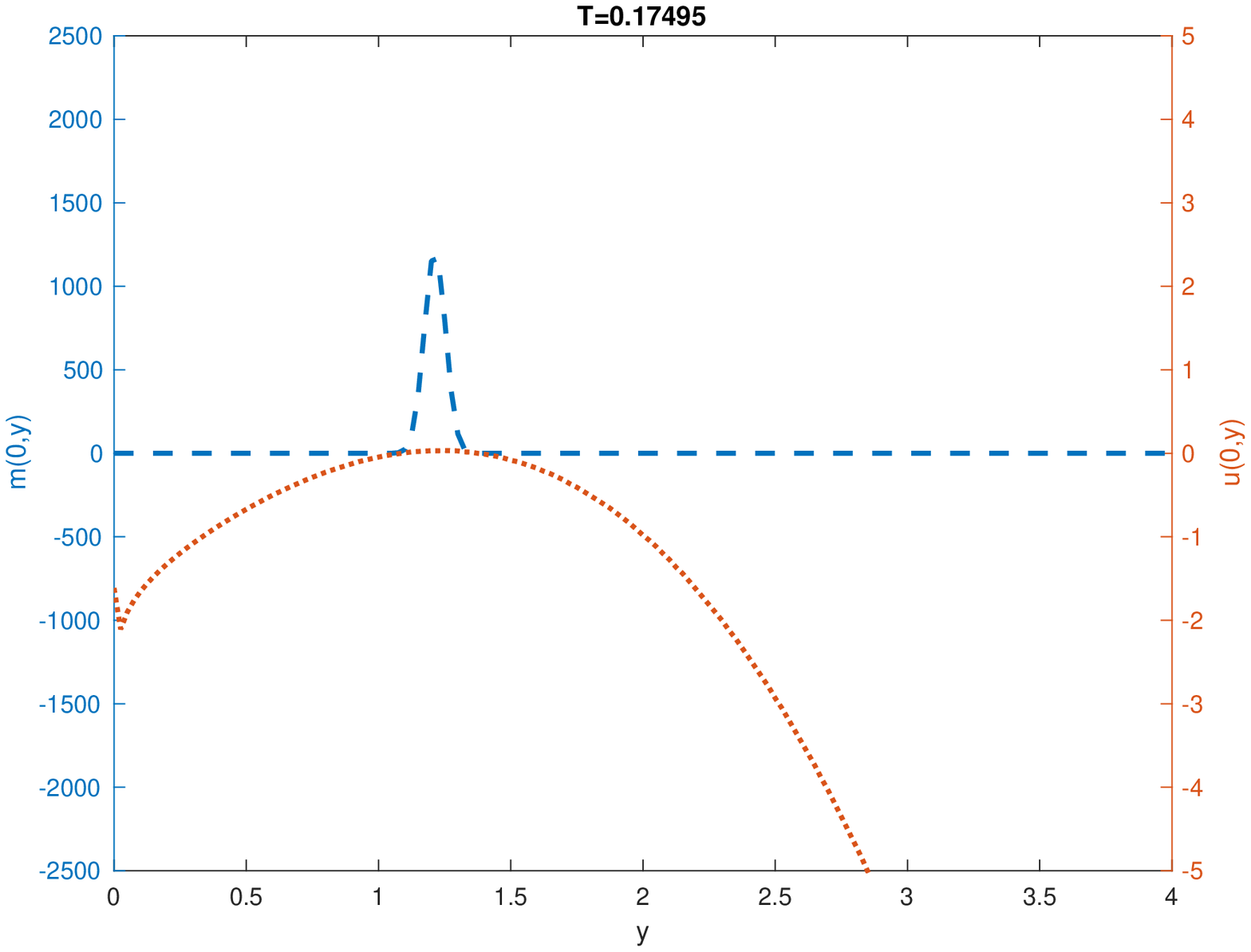}
\includegraphics[width=0.45\textwidth]{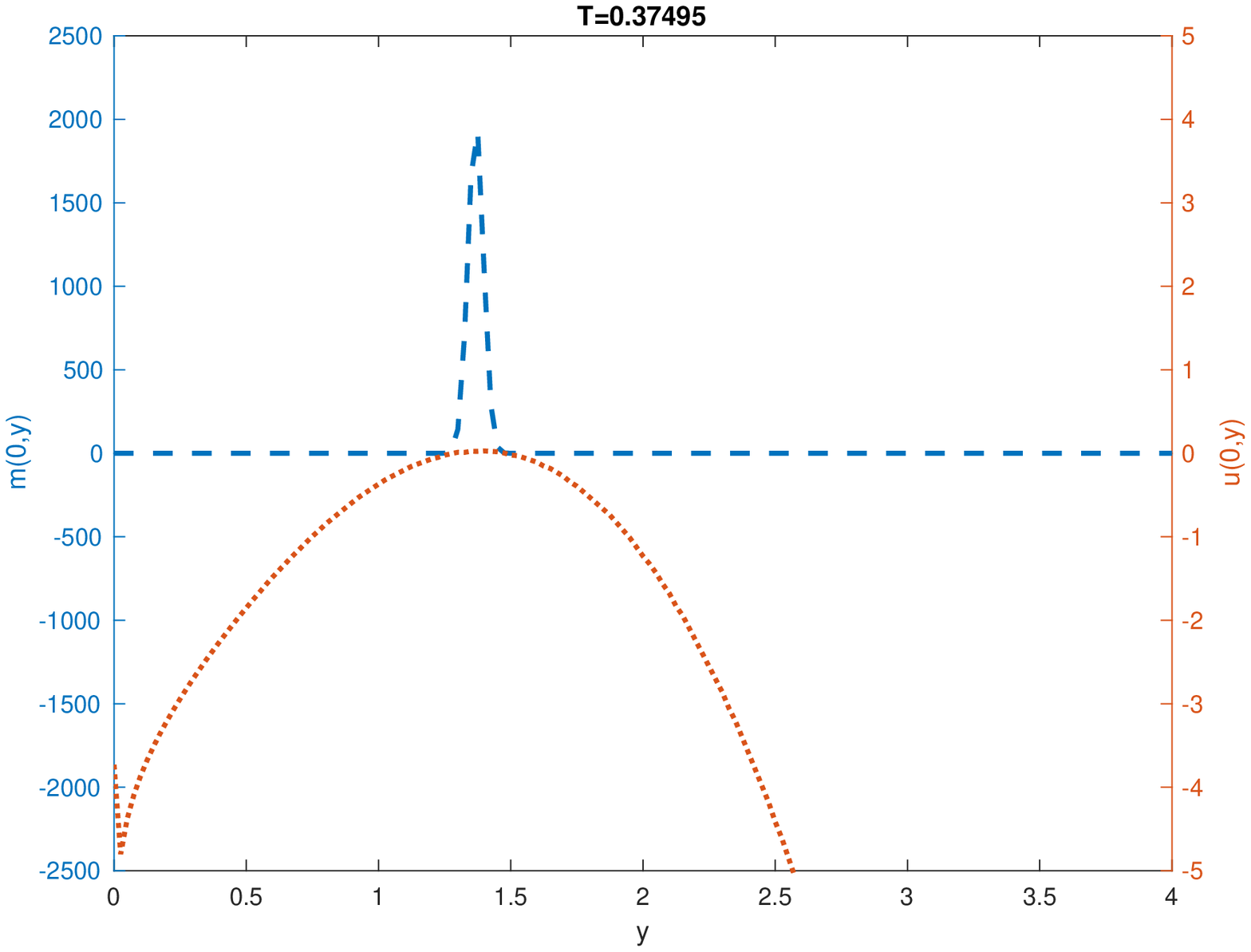}
\includegraphics[width=0.45\textwidth]{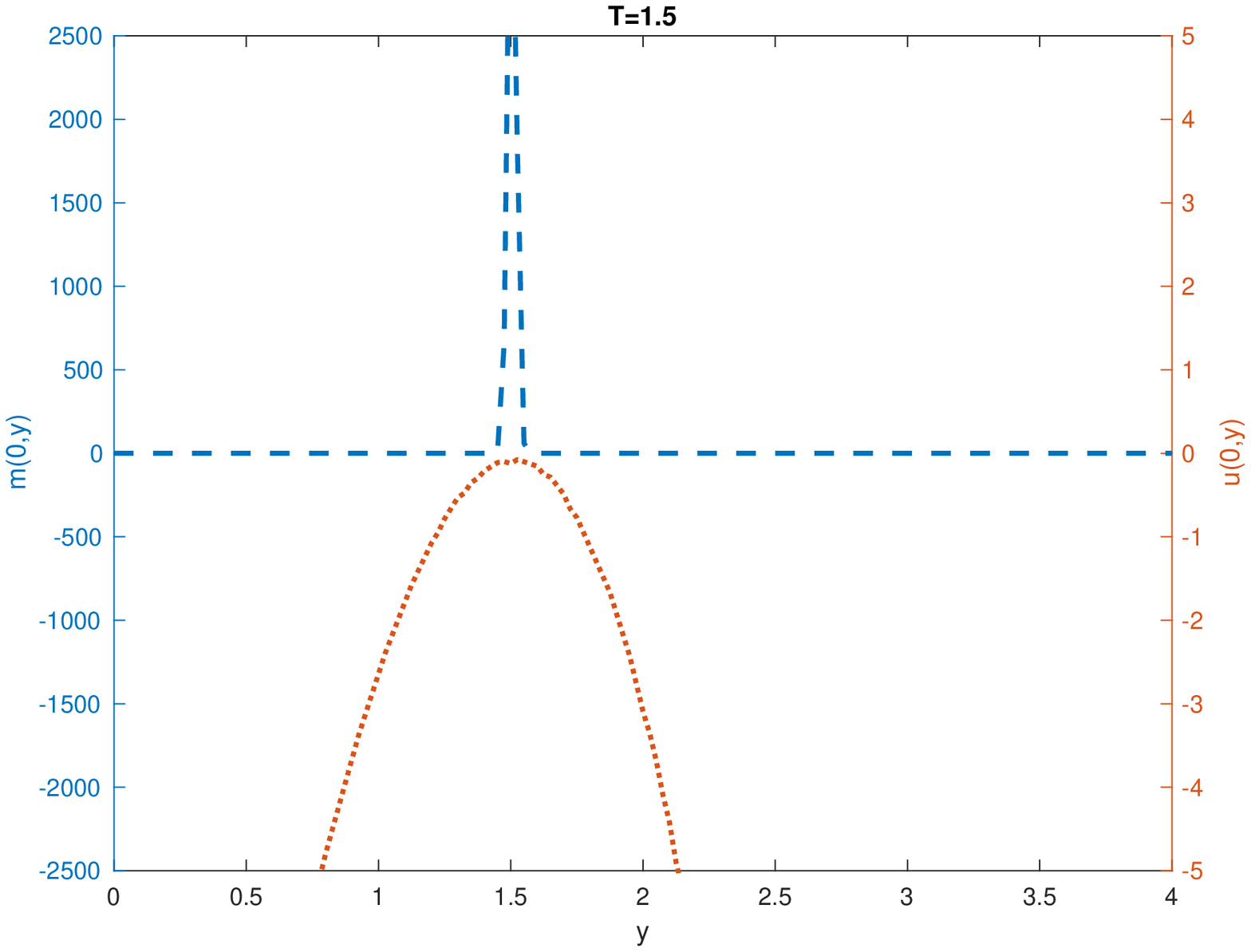}
\vspace{-5mm}
\caption{
Concentration dynamics: snapshots of the population distribution in $y$ at four different times with respect to the trait variable. Blue dashed line= $m_\eps$, red dotted line = $u_\eps$.}
\label{fi.concentration}
\end{figure}
\begin{figure}[!h]
\centering
\includegraphics[width=0.45\textwidth]{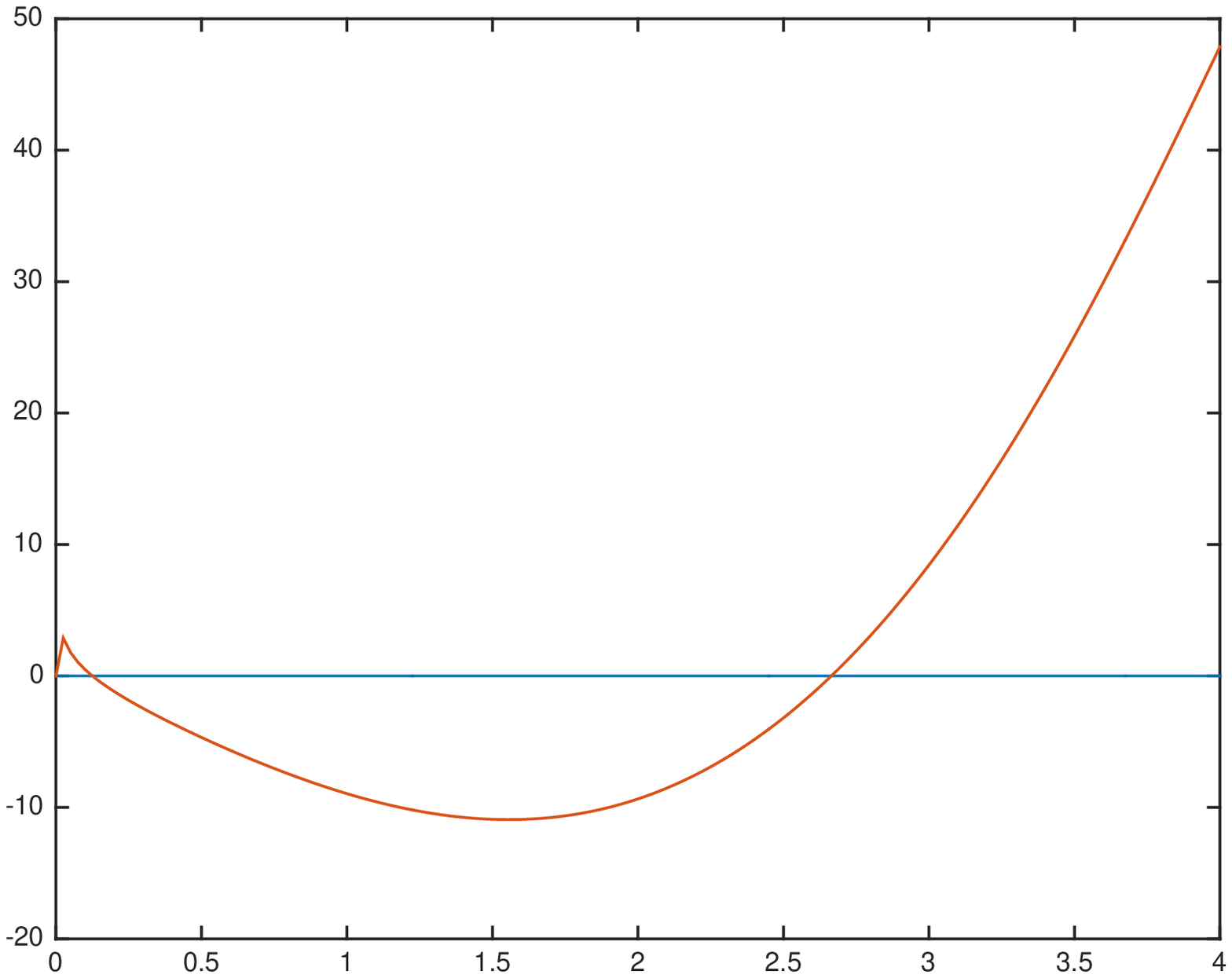}
\includegraphics[width=0.45\textwidth]{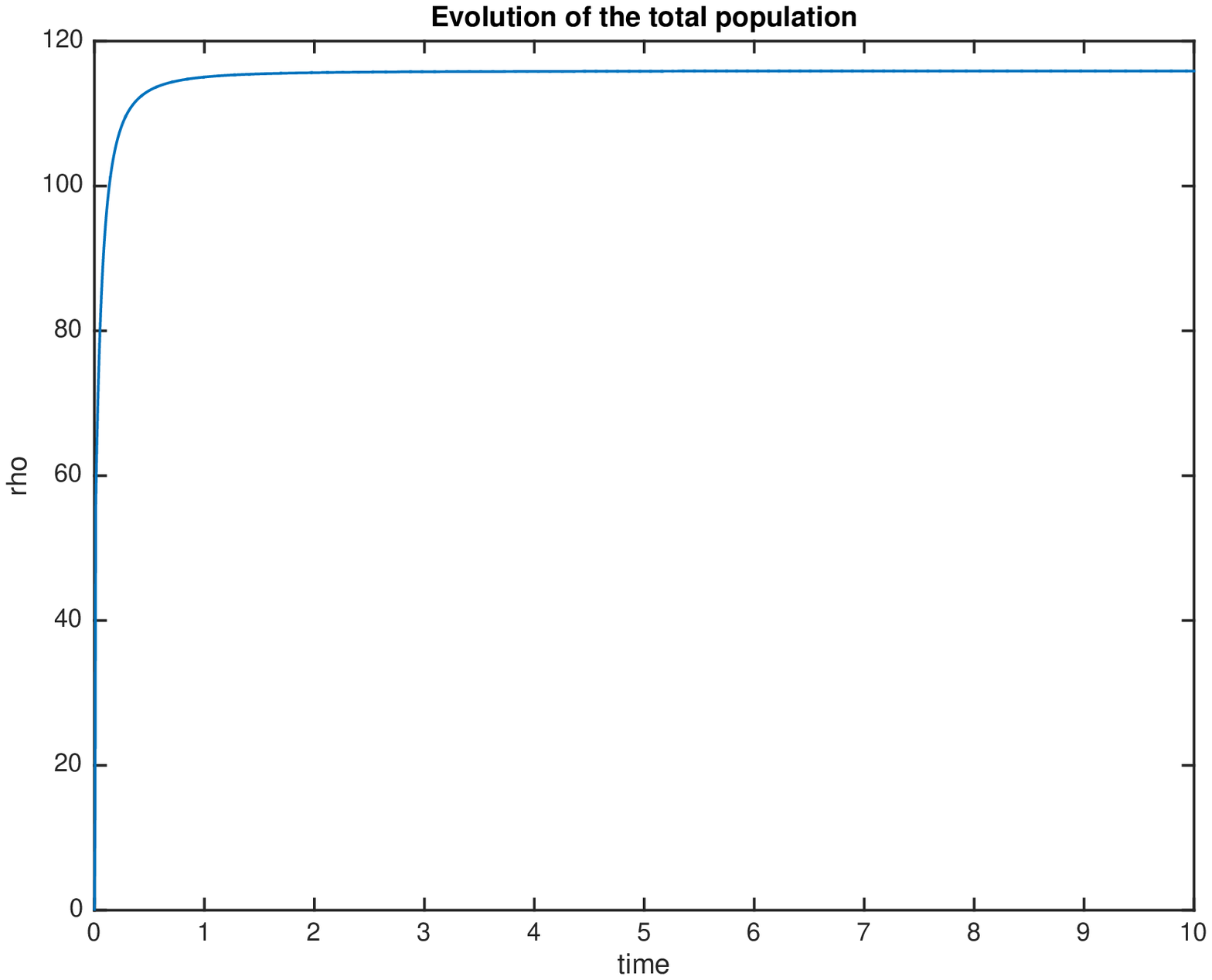}
\caption{Left: Principal eigenvalue $\Lambda(y)$. Right: Evolution of $\rho$ over time}
\label{fi.playR2:ch2}
\end{figure}

Figure~\ref{fi.isovaleurs} shows the population distribution with regards to 
$y$ (abscissa) and $x$ (ordinates) at two different times. The population has 
moved and concentrated to a location which is different from its initial one. 
One can observe this continuous evolution of the population distribution in 
Figure~\ref{fi.concentration} where we show the distribution of individuals with 
age $x=0$ at different times and identify an ESD. 

The ESD can also be identified thanks to the principal eigenvalue. We show in 
Figure~\ref{fi.playR2:ch2} the eigenvalue $\Lambda (y)$ solved by the Newton 
method using~\eqref{derlambdaS}. From equation~\eqref{dynamique} one can notice 
that the equilibrium points have to satisfy $\nabla_y \Lambda(y)=0$ and moreover 
that the dynamics of the concentration is directed towards the minimum points 
of $\Lambda(y)$, as predicted by our analysis.

\section{Conclusion}
The approach we develop here, based on the transformation $m_\epsilon=p_\epsilon e^{\frac{u_\epsilon}{\epsilon}}$, seems convenient for the study concentration phenomena. In the case without mutations, we get precise results on the concentration points as well as on the asymptotic age profile of the population. In particular we have developed a method where the asymptotic analysis is not performed on $u_\eps$ but on $p_\eps$, using relative entropy methods.
Because of technical difficulties, we are not able yet to infer the same conclusion for the case with mutations. However the result seems to hold, at least for short time, more precisely before the Hamilton-Jacobi singularities occur in \eqref{HJU}.  Indeed, denoting 	$Q_\epsilon(t,x,y)= Q(x,y,\eta_\epsilon(t,y))$ we have that $v_\epsilon=\frac{p_\epsilon}{Q_\epsilon}$ satisfies a transport equation with a source term which reads
\begin{equation*}
		\epsilon \partial_t v_\epsilon(t,x,y)+A(x,y)\partial_x v_\epsilon(t,x,y)=\eps\frac{\partial_\eta Q(x,y,\eta_\epsilon(t,y))}{Q(x,y,\eta_\epsilon(t,y))}\partial_t\eta_\epsilon(t,y)v_\epsilon(t,x,y).\\
\end{equation*}
If $\partial_t \eta_\epsilon$ is bounded uniformly, we can deduce that $v_\eps$ is also bounded uniformly, which implies a weak concentration of the population on the set $\{(t,y)\ /\ u(t,y)=0\}$. A rigorous proof of this result along with an entropy method to prove strong convergence of $p_\eps$ will be proposed in a forthcoming paper.


\section{Appendices}

\subsection{Saturation of the population denstity}\label{densityappendix}
We prove Proposition~\ref{saturation} and Proposition \ref{saturation2}. Integrating (\ref{equa}) and using \eqref{hypM2}, we obtain
\begin{equation}\label{eq_rho}
\eps\frac{\de}{\de t}\rho_\eps(t)  =- \int_{\R^n}\int_{\R_+} \big(\D_x [A(x,y)m_\eps(t,x,y)]dxdy+ (d(x,y)+\rho_\eps(t))m_\eps(t,x,y)\big)dxdy.
\end{equation}
First we prove that $A(x,y)m_\eps(t,x,y)$ converges to 0 when $x$ goes to infinity. Note that from \eqref{hypothesed} and the explicit formula for $Q$ given in \eqref{expQ}, we have
\begin{equation*}
\forall y \in \R^n,\quad\lim_{x\to\infty}Q(x,y)= \lim_{x\to\infty}\frac{1}{A(x,y)}\mathrm{exp}\left(-\int_0^{x}{\frac{d(x',y)-\Lambda(y)}{A(x',y)}d x'}\right)=0.
\end{equation*}
Since $p_\eps^0$ is bounded from \eqref{initialp}, we deduce that $m^0_\eps$ converges to 0 when $x$ goes to infinity.  
Besides, as $A$ is bounded and $m_\eps$ satisfies \eqref{equa} which is a transport equation, then a classical result implies that $m_\eps$ converges to 0 when $x$ goes to infinity.

Then, integrating by parts in \eqref{eq_rho}, we obtain
\begin{align*}
\eps\frac{\de}{\de t}\rho_\eps(t) &=\int_{\R^n}\int_{\R_+}\left[\left(\frac{1}{\eps^n}\int_{\R^n}{M(\frac{y'-y}{\eps})d y}\right)b(x,y') -d(x,y')\right]m_\eps(t,x,y')dxdy' -\rho_\eps^2(t)\\
 &\displaystyle\leq \overline{r}\rho_\eps(t) -\rho_\eps^2(t). \phantom{\int}
\end{align*}
Therefore, using (\ref{hypM1bis}), we conclude
$$0\leq\rho_\eps(t)\leq\max\left({\overline{r}},\rho_\eps^0\right).$$
The other inequality can be proved in the same way.

\subsection{Proof of Theorem \ref{theoremeeigen} and Theorem 
\ref{theoremeeigenS}}\label{eigenappendix}
We only prove Theorem \ref{theoremeeigen}, as Theorem~\ref{theoremeeigenS} is a particular case with $\eta=1$. Equation~(\ref{valeurpropre}) is equivalent to write
\begin{equation*}
Q(x,y,\eta)=Q(0,y,\eta)\mathrm{exp}\left(-\int_0^{x}{\frac{\D_xA(x',y)+d(x',y)-\Lambda(y,\eta)}{A(x',y)}dx'}\right),\end{equation*} 
and thanks to the condition at $x=0$,
	\begin{equation}\label{DefinitionQ}
		Q(x,y,\eta)= \eta\frac{1}{A(x,y)}\mathrm{exp}\left(-\int_0^{x}{\frac{d(x',y)-\Lambda(y,\eta)}{A(x',y)}d x'}\right).
	\end{equation}
Multiplying by $b(x,y)$ and integrating with regard to the $x$ variable, we obtain 
\begin{equation}\label{FormuleImplicite}
\frac{1}{\eta}=F(y,\Lambda(y,\eta)).
\end{equation}
A direct calculation gives $\D_\lambda F>0$, thus \eqref{FormuleImplicite} ensures uniqueness for $\Lambda$ and then for $Q$.

Moreover, as $F(y,+\infty)=+\infty$ and $F(y,-\infty)=0$, there exists such a $\Lambda(y,\eta)$. Besides, defining $Q$ as in~\eqref{DefinitionQ} implies that $Q$ is in $L^1\cap L^\infty$, thanks to~(\ref{hypothesed}), thus it proves existence.
Finally, using the implicit function theorem in~\eqref{FormuleImplicite} we deduce that $\Lambda(y,\eta)$ is $\mathcal{C}^1$ and~(\ref{detlambda}) holds true. 

For the dual equation \eqref{EquationDualeBis}, a simple calculation shows that the solution $\Phi$ must be given by
\begin{equation}
\Phi(x,y,\eta)=\Phi(0,y,\eta)e^{-\int_0^x \frac{\Lambda(y,\eta)-d(x',y)}{A(x',y)}d x'}\left(1-\eta\int_0^x\frac{b(x',y)}{A(x',y)}e^{\int_0^{x'}\frac{\Lambda(y,\eta)-d(x'',y)}{A(x'',y)}d x''}\right),
\end{equation}
where $\Phi(0,y,\eta)>0$ is determined by the normalization $\int_{\R_+}{Q(x,y,\eta)\Phi(x,y,\eta)d x}=1.$

Finally, we prove in the case without mutations
\begin{equation}
\forall y \in \R^n, \quad \underline{r} \leq -\Lambda(y)\leq \overline{r}.
\end{equation}
Integrating \eqref{valeurpropreS} with respect to $x$, we have
\begin{equation}
-\Lambda(y)=\frac{\int_{\R_+}(b(x,y)-d(x,y))Q(x,y)dx}{\int_{\R_+}Q(x,y)dx}.
\end{equation}
Thus, using \eqref{hypM2}, we obtain the announced result.

\subsection{Proof of Proposition~\ref{PropConvexity}}\label{HConvexity}

	We first state the following lemma. We recall that the definitions of $F(y,\lambda), \Lambda(y,\eta)$ and $\eta(p)$ are given in~\eqref{definitionF}, \eqref{deriveelambda} and \eqref{DefinitionHamiltonian}.
	\begin{Lemme}\label{lemmemajoration}
		We have 
		\begin{equation}\label{eq:majorationderive}
				\eta(p) \big[\D_\lambda F\big(y,\Lambda(y,\eta(p))\big) \big]^2\leq\D^2_{\lambda}F\big(y,\Lambda(y,\eta(p))\big),
		\end{equation}
		and
		\begin{equation}\label{eq:majorationderiveeta}
			\big[ \D_{p_i}\eta(p)\big]^2\leq\eta(p)\D^2_{\p}\eta(p).
		\end{equation}
	\end{Lemme}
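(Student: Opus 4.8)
The plan is to recognize both inequalities in Lemma~\ref{lemmemajoration} as instances of the Cauchy--Schwarz inequality, applied to two different positive measures. First I would dispatch \eqref{eq:majorationderiveeta}, which is the simpler one: differentiating the defining integral $\eta(p)=\int_{\R^n}M(z)e^{p\cdot z}\de z$ under the integral sign gives
\begin{equation*}
\D_{p_i}\eta(p)=\int_{\R^n}M(z)\,z_i\,e^{p\cdot z}\de z,\qquad
\D^2_{p_i}\eta(p)=\int_{\R^n}M(z)\,z_i^2\,e^{p\cdot z}\de z.
\end{equation*}
Writing $M(z)\,z_i\,e^{p\cdot z}=\big(\sqrt{M(z)e^{p\cdot z}}\big)\big(z_i\sqrt{M(z)e^{p\cdot z}}\big)$ and applying Cauchy--Schwarz yields exactly $\big[\D_{p_i}\eta(p)\big]^2\leq\eta(p)\,\D^2_{p_i}\eta(p)$.

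For \eqref{eq:majorationderive}, I would first compute the $\lambda$-derivatives of $F$ from the explicit formula \eqref{definitionF}. Setting $g(x,y):=\int_0^x\frac{\de x'}{A(x',y)}$, one has $\D_\lambda f(x,y,\lambda)=g(x,y)\,f(x,y,\lambda)$ and hence $\D^2_\lambda f(x,y,\lambda)=g(x,y)^2 f(x,y,\lambda)$, so that
\begin{equation*}
\D_\lambda F(y,\lambda)=\int_{\R_+}g(x,y)\,f(x,y,\lambda)\,\de x,\qquad
\D^2_\lambda F(y,\lambda)=\int_{\R_+}g(x,y)^2\,f(x,y,\lambda)\,\de x.
\end{equation*}
Now I evaluate at $\lambda=\Lambda(y,\eta(p))$: by \eqref{deriveelambda} we have $\eta(p)\,F\big(y,\Lambda(y,\eta(p))\big)=1$, so $\eta(p)\,f\big(x,y,\Lambda(y,\eta(p))\big)\,\de x$ is a probability measure on $\R_+$. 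Applying Cauchy--Schwarz with respect to this measure to the function $g$,
\begin{equation*}
\Big(\int_{\R_+}g\cdot\eta(p)f\,\de x\Big)^2\leq\int_{\R_+}g^2\cdot\eta(p)f\,\de x,
\end{equation*}
and dividing through by $\eta(p)$ gives precisely $\eta(p)\big[\D_\lambda F\big(y,\Lambda(y,\eta(p))\big)\big]^2\leq\D^2_\lambda F\big(y,\Lambda(y,\eta(p))\big)$.

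The only points requiring care are the justification of differentiation under the integral sign and the finiteness of $\D^2_\lambda F$ and $\D^2_{p_i}\eta$; both follow from the exponential decay built into $f$ via \eqref{hypothesed} and the lower bound in \eqref{bornesA} (which makes $g^2 f$ integrable in $x$), and from the integrability of $M$ against polynomial weights. I do not expect any genuine obstacle: the substance of the lemma is entirely the Cauchy--Schwarz step, which is what will subsequently feed the convexity of $p\mapsto H(y,p)$ in Proposition~\ref{PropConvexity}.
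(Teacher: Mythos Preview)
Your proof is correct and matches the paper's approach essentially line for line: the same auxiliary function $g(x,y)=\int_0^x A(x',y)^{-1}\,\de x'$, the same identification $\D_\lambda F=\int gf$ and $\D^2_\lambda F=\int g^2 f$, and the same Cauchy--Schwarz step combined with $F(y,\Lambda(y,\eta(p)))=1/\eta(p)$ from \eqref{deriveelambda}. The only cosmetic difference is that you package the argument via the probability measure $\eta(p)f\,\de x$, whereas the paper writes the raw Cauchy--Schwarz inequality $[\D_\lambda F]^2\leq F\cdot\D^2_\lambda F$ and then multiplies through; the content is identical.
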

	\begin{proof}[Proof of Lemma \ref{lemmemajoration}\newline]
	
	We define and compute using \eqref{definitionF} 
\begin{equation*}
g(x,y):=\int_0^x\frac{1}{A(x',y)}\de x', \quad
\D_\lambda F(y,\lambda)=\int_0^\infty g(x,y)f(x,y,\lambda)dx,
\end{equation*}
With these notations we may write
\begin{equation*}
\D^2_{\lambda}F(y,\lambda)=\int_0^\infty g(x,y)^2f(x,y,\lambda)dx.
\end{equation*}
	Using the Cauchy-Schwarz inequality we obtain
	\begin{equation*}
		\big[\D_\lambda F \big(y,\Lambda(y,\eta(p)) \big)\big]^2\leq\D^2_{\lambda}F \big(y,\Lambda(y,\eta(p)) \big)\cdot F \big(y,\Lambda(y,\eta(p)) \big),
	\end{equation*}
	and then thanks to \eqref{deriveelambda} the first inequality follows.
	The second inequality is a simple consequence of the Cauchy-Schwarz inequality on 
	$
	\eta(p)=\int_{\R^n}M(z)e^{p\cdot z}dz.
	$
\end{proof}

We go back to the proof of Proposition~\ref{PropConvexity}. By differentiating twice \eqref{deriveelambda} with respect to $p_i$, we obtain
	\begin{equation}\label{derivepremiere}
		\D_\lambda F\big(y,\Lambda(y,\eta(p))\big)D_{p_i}\Lambda(y, \eta (p))=-\frac{\D_{p_i}\eta(p)}{{\eta(p)}^2},
	\end{equation}
\begin{equation*}
\D_\lambda F \cdot D^2_{p_i}\Lambda(y,\eta(p))+\D^2_{\lambda}F \cdot\big[D_{p_i}\Lambda(y,\eta(p))\big]^2=-\frac{\D_{p_i^2}\eta(p)}{\eta(p)^2}+2\frac{\D_{p_i}\eta(p)}{\eta(p)^3}.	
\end{equation*}
Then using \eqref{eq:majorationderive}, \eqref{eq:majorationderiveeta} and \eqref{derivepremiere}, we derive
	\begin{equation*}
		\begin{aligned}
			\D_\lambda F \cdot D^2_{p_i}\Lambda(y,p)&=-\D^2_{\lambda}F \left[\frac{\D_{p_i}\eta(p)}{ \eta(p)^2\D_\lambda F}\right]^2 -\frac{\D_{p_i^2}\eta(p)}{\eta(p)^2}+2\frac{\big[\D_{p_i}\eta(p)\big]^2}{\eta(p)^3}\\
			&\leq -\frac{\big[\D_{p_i}\eta(p)\big]^2}{\eta(p)^3}-\frac{\D_{p_i^2}\eta(p)}{\eta(p)^2}+2\frac{\big[\D_{p_i}\eta(p)\big]^2}{\eta(p)^3}\\
&= -\frac{1}{\eta(p)^3}\left(\eta(p)\D^2_{p_i}\eta(p)-\big[\D_{p_i}
\eta(p)\big]^2\right) \leq 0,	
\end{aligned}
	\end{equation*}
hence the announced convexity result on $p\mapsto H(y,p)$.

\subsection{Proof of Proposition~\ref{borneV}}\label{AppAPrioriEstimate}

Our goal is to prove
\begin{equation}\label{objectif}
\D_tU_\eps^R(t,y)\leq\sup_{y\in \R^d}\D_tU^{R,0}_\eps:=\sup_{y\in \R^n}\D_t u^0_\eps(0,y),\quad \forall R>0, \forall y \in \R^n, \forall t>0.
\end{equation}
The reverse inequality can be obtained similarly. Note that from \eqref{H1} we have that
	\begin{equation*}
		\D_tU^{0,R}_\eps=-\Lambda\left(y,\int_{\R^n}{M(z)e^{\frac{u^0_\eps(y+\eps z)-u^0_\eps(y)}{\eps}}d z}\right)\text{ is bounded uniformly in }\eps, 
	\end{equation*}
thus \eqref{objectif} allows us to conclude that $\D_tU^{R}_\eps$ is bounded uniformly in $R$ and $\eps$.

\paragraph{}
	
We prove \eqref{objectif} by contradiction. We assume that there exists $(T,y_0)\in(0,+\infty)\times\R^n$ such that
	\begin{equation}\label{absurd2}
		\D_tU_\eps^R(T,y_0)-\sup\D_tU_\eps^{R,0}>0.
	\end{equation}
For conciseness, we define
$
		V_\eps^R(t,y):=\D_t U_\eps^R(t,y).
$
	For $\beta>0$, $\alpha>0$ small and for $t\in[0,T], y\in\R^n,$ we also introduce
	\begin{equation*}
			\varphi_{\alpha,\beta}(t,y):=V_\eps^R(t,y)- \alpha t-\beta\vert y-y_0\vert.
	\end{equation*}
	We choose $\alpha$ small enough to ensure $\varphi_{\alpha,\beta}(T,y_0)>\varphi_{\alpha,\beta}(0,y_0)=\D_tU_\eps^{R,0}(y_0),$ which is possible thanks to assumption \eqref{absurd2}.
From the definition of $\phi_R$, we have $\vert V^R_\eps(t,y)\vert\leq R$, therefore $\varphi_{\alpha,\beta}$ decreases to $-\infty$ as $\vert y\vert \to \infty$ and reaches its maximum on ${[0,T]\times\R^n}$ at a point $(\bar t, \bar y)$. We have
\begin{equation*}
	\varphi_{\alpha,\beta}(\bar t,\bar y+\eps z)\leq\varphi_{\alpha,\beta}(\bar t,\bar y),\quad\forall z\in\R^n,
\end{equation*}
and thus
\begin{equation}\label{accroissement1}
	\frac{V^R_\eps(\bar t,\bar y+\eps z)-V^R_\eps(\bar t,\bar y)}{\eps}\leq\beta\frac{\vert\bar y+\eps z\vert-\vert\bar y\vert}{\eps}\leq\beta \vert z\vert,\quad \forall z\in\R^n.
\end{equation}
Moreover, as $u^0_\eps$ is $k_0$-Lipschitz continuous from \eqref{initialu}, then	we obtain for all $t>0,\ (y,y')\in\R^{2n},$
\begin{multline}\label{lipschitz2}
\vert U_\eps^R(t,y)-U_\eps^R(t,y')\vert \\
\leq \vert 
U_\eps^R(t,y)-U_\eps^{0,R}(y)\vert+ \vert 
U_\eps^{0,R}(y)-U_\eps^{0,R}(y')\vert+\vert 
U_\eps^{0,R}(y')-U_\eps^R(t,y')\vert\\
\leq 2RT+k_0\vert y-y'\vert.
\end{multline}
Next, we set 
	\begin{gather}
		\eta_\eps^R(t,y):=\int_{\R^n}{M(z)e^{\frac{U^R_\eps(t,y+\eps z)-U^R_\eps(t,y)}{\eps}}}d z, \\
	\eta_\eps^{\pm}:=\int_{\R^n}{M(z)e^{\pm\left(\frac{2RT}{\eps}+k_0\vert z\vert\right)}}dz,		
	\end{gather}
and notice that $0<\eta_\eps^-\leq\eta_\eps^R(t,y)\leq\eta_\eps^+.$

We have chosen $\alpha$ such that $\varphi_{\alpha,\beta}(0,y_0)<\varphi_{\alpha,\beta}(T,y_0)$, then we know that $\bar t>0$. Hence ${\D_t\varphi_{\alpha,\beta}(\bar t,\bar y)\geq0}$, that is ${\D_tV_\eps^R(\bar t,\bar y)\geq\alpha}$ (if $\bar t=T$ then $\D_t V_\eps^R(\bar t,\bar y)$ stands for the left-derivative).
Differentiating \eqref{eqR}, we have
\begin{equation}
\label{eqRD1}
		\D_tV_\eps^R(t,y)=\phi_R'\left(-\Lambda\left(y,\eta_\eps^R\right)\right)\left(-\D_\eta\Lambda\left(y,\eta_\eps^R\right)\right)\Gamma_\eps^R(t,y),
\end{equation}
where $\Gamma_\eps^R(t,y):=\int_{\R^n}{M(z)e^{\frac{U^R_\eps(t,y+\eps z)-U^R_\eps(t,y)}{\eps}}\left(\frac{V^R_\eps(t,y+\eps z)-V^R_\eps(t,y)}{\eps}\right)dz}$.

\paragraph{}

Writing \eqref{eqRD1} at $(\bar t,\bar y)$, using \eqref{detlambda} and 
\eqref{accroissement1}-\eqref{lipschitz2}, we have
\begin{equation}
\begin{aligned}
\alpha&\displaystyle\leq\D_tV_\eps^R(\bar t,\bar y)= 
\phi_R'\left(-\Lambda\left(y,\eta_\eps^R(\bar t,\bar 
y)\right)\right)\left(-\D_\eta\Lambda\left(y,\eta_\eps^R(\bar t,\bar 
y)\right)\right)\Gamma_\eps^R(\bar t,\bar y) \phantom{\int}\\
&\leq\beta\sup_{r \in 
\R}\phi_R'(r)\sup_{\substack{\eta\in(\eta_\eps^-,\eta_\eps^+)\\y\in\R^n}}\left[
-\D_\eta\Lambda\left(y,\eta\right)\right]\left(\int_{\R^n}{M(z)e^{\frac{
U^R_\eps(\bar t,\bar y+\eps z)-U^R_\eps(\bar t,\bar y)}{\eps}}\vert z\vert d 
z}\right) \\
&\leq\beta\sup_{r \in 
\R}\phi_R'(r)\sup_{\substack{\eta\in(\eta_\eps^-,\eta_\eps^+)\\y\in\R^n}}\left[
-\D_\eta\Lambda\left(y,\eta\right)\right]\left(\int_{\R^n}{M(z)e^{\frac{2RT}{
\eps}+k_0\vert z\vert}\vert z\vert dz}\right).
\end{aligned}
\end{equation}
Hence $\alpha\leq \bar C\beta$, where $\bar C$ is a constant that does not depend on $\beta$. Then as $\beta$ goes to $0$, we obtain $\alpha\leq0$, which is absurd. The proof is thereby achieved.

%
%

\bibliographystyle{plain}
\bibliography{bibli.bib}

\end{document}